\documentclass[a4paper,12pt]{amsart}



\usepackage[utf8]{inputenc}
\usepackage[T1]{fontenc}
\usepackage[american]{babel}

\usepackage[margin=7em]{geometry}


\usepackage{amssymb, amsfonts}

\usepackage{bbm}


\usepackage{enumerate}








\linespread{1.03}



\numberwithin{equation}{section}

\newtheorem{theoremcounter}{theoremcounter}[section]



\newtheorem{algorithm}[theoremcounter]{Algorithm}
\newtheorem{conjecture}[theoremcounter]{Conjecture}
\newtheorem{corollary}[theoremcounter]{Corollary}
\newtheorem{definition}[theoremcounter]{Definition}

\newtheorem{lemma}[theoremcounter]{Lemma}
\newtheorem{proposition}[theoremcounter]{Proposition}
\newtheorem{remark}[theoremcounter]{Remark}
\newtheorem{remarks}[theoremcounter]{Remarks}
\newtheorem{theorem}[theoremcounter]{Theorem}















\renewcommand{\frak}{\ensuremath{\mathfrak}}
\newcommand{\cal}{\ensuremath{\mathcal}}
\newcommand{\bboard}{\ensuremath{\mathbb}}



\newcommand{\frake}{\ensuremath{\frak{e}}}



\newcommand{\cD}{\ensuremath{\cal{D}}}
\newcommand{\cE}{\ensuremath{\cal{E}}}
\newcommand{\cF}{\ensuremath{\cal{F}}}

\newcommand{\cL}{\ensuremath{\cal{L}}}


\newcommand{\bbP}{\ensuremath{\bboard P}}

\newcommand{\rmF}{\ensuremath{\mathrm{F}}}
\newcommand{\rmG}{\ensuremath{\mathrm{G}}}
\newcommand{\rmH}{\ensuremath{\mathrm{H}}}
\newcommand{\rmJ}{\ensuremath{\mathrm{J}}}

\newcommand{\rmM}{\ensuremath{\mathrm{M}}}

\newcommand{\rmT}{\ensuremath{\mathrm{T}}}


\newcommand{\rmg}{\ensuremath{\mathrm{g}}}

\newcommand{\rmr}{\ensuremath{\mathrm{r}}}

\newcommand{\amid}{\ensuremath{\mathop{\mid}}}



\newcommand{\ZZ}{\ensuremath{\mathbb{Z}}}
\newcommand{\QQ}{\ensuremath{\mathbb{Q}}}
\newcommand{\RR}{\ensuremath{\mathbb{R}}}
\newcommand{\CC}{\ensuremath{\mathbb{C}}}


\renewcommand{\Im}{\ensuremath{\mathop{\mathfrak{Im}}}}

\newcommand{\isdiv}{\amid}
\newcommand{\nisdiv}{\ensuremath{\mathop{\nmid}}}

\renewcommand{\pmod}[1]{\ensuremath{\;(\mathrm{mod}\, #1)}}







\newcommand{\Mat}[1]{\ensuremath{\mathrm{Mat}_{#1}}}
\newcommand{\MatT}[1]{\ensuremath{\mathrm{Mat}^\rmT_{#1}}}


\newcommand{\GL}[1]{\ensuremath{\mathrm{GL}_{#1}}}
\newcommand{\SL}[1]{\ensuremath{\mathrm{SL}_{#1}}}

\newcommand{\Sp}[1]{\ensuremath{\mathrm{Sp}_{#1}}}
\newcommand{\Orth}[1]{\ensuremath{\mathrm{O}_{#1}}}

\newcommand{\T}{\ensuremath{\mathrm{T}}}

\newcommand{\tr}{\ensuremath{\mathrm{tr}}}


\newcommand{\rk}{\ensuremath{\mathop{\mathrm{rk}}}}

\newcommand{\slashdiv}{\ensuremath{\mathop{/}}}



































\newcommand{\lspan}{\ensuremath{\mathop{\mathrm{span}}}}



\newcommand{\HS}{\mathbb{H}}





























\newcommand{\td}{\tilde}

\newcommand{\ud}{\underline}


\newcommand{\tGL}[1]{\ensuremath{\widetilde{\rm GL}_{#1}}}
\newcommand{\Aff}[1]{\ensuremath{{\rm Aff}_{#1}}}
\newcommand{\tAff}[1]{\ensuremath{\widetilde{\rm Aff}_{#1}}}
\newcommand{\ord}{\ensuremath{{\rm ord}}}
\newcommand{\tSp}[1]{\ensuremath{\widetilde{\rm Sp}_{#1}}}
\newcommand{\tGamma}{\ensuremath{\widetilde{\Gamma}}}

\newcommand{\CH}{\ensuremath{{\rm CH}}}

\newcommand{\rot}{\ensuremath{{\rm rot}}}

\newcommand{\disc}{\ensuremath{{\rm disc}}}

\renewcommand{\tr}{\ensuremath{\mathrm{trace}}}

\renewcommand{\div}{\ensuremath{\mathrm{div}}}


\begin{document}

\title[Formal Fourier Jacobi expansions]{Formal Fourier Jacobi expansions \\ and special cycles of codimension~$2$}

\author{Martin Raum}
\address{ETH, Dept. Mathematics, Rämistraße 101, CH-8092, Zürich, Switzerland}
\email{martin.raum@math.ethz.ch}
\urladdr{http://www.raum-brothers.eu/martin/}
\thanks{The author is supported by the ETH Zurich Postdoctoral Fellowship Program and by the Marie Curie Actions for People COFUND Program.}

\subjclass[2010]{Primary 11G18 ; Secondary 11F46, 11F30, 11Y40}
\keywords{special cycles of codimension $2$, formal Fourier Jacobi expansions, computing Siegel modular forms}

\begin{abstract}
We prove that formal Fourier Jacobi expansions of degree~$2$ are Siegel modular forms.  As a corollary, we deduce modularity of the generating function of special cycles of codimension~$2$, which were defined by Kudla.  A second application is the proof of termination of an algorithm to compute Fourier expansions of arbitrary Siegel modular forms of degree~$2$.  Combining both results enables us to determine relations of special cycles in the second Chow group.
\end{abstract}

\maketitle

\section{Introduction and statement of results}
\label{sec:introduction}

There is a close interplay between Siegel modular forms and Jacobi forms, which, for example, was employed to prove the Saito-Kurokawa conjecture~\cite{An79, Ma79a, Ma79b, Ma79c, Za81}.  This connection is founded on the Fourier Jacobi expansion of Siegel modular forms, which can be turned into a formal notion.  Formal Fourier Jacobi expansions of degree~$2$ and weight~$k \in \ZZ$, in the simplest case, are formal series with respect to $q'$ of the form
\begin{gather*}
  \sum_{0 \le m \in \ZZ} \phi_m \, q^{\prime \, m}
\end{gather*}
such that:
\begin{enumerate}[(i)]
\item The $\phi_m$ are Jacobi forms of weight~$k$ and index~$m$ (cf.~\cite{EZ85} and Section~\ref{sec:jacobi-forms} for a definition).
\item The Fourier coefficients of $\phi_m$ satisfy $c(\phi_m; n, r) = c(\phi_n; m, r)$.
\end{enumerate}
A more precise definition will be given later.

Ibukiyama, Poor, and Yuen studied such expansions in~\cite{IPY12}, and at the end of the introduction they asked the question whether every Fourier Jacobi expansion is convergent.  Theorem~\ref{thm:main-theorem-fourier-jacobi} answers this question in the affirmative.
\begin{theorem}
Every formal Fourier Jacobi expansions of degree~$2$ and of any weight converges, and hence is the Fourier Jacobi expansion of a Siegel modular form.
\end{theorem}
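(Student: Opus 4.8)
The plan is to reduce the convergence statement to the purely algebraic assertion that the graded ring $\widehat M_* := \bigoplus_k \widehat M_k$ of all formal Fourier--Jacobi expansions of degree~$2$ is a \emph{finitely generated module} over its subring $M_*(\Sp 4(\ZZ))$ of honest Siegel modular forms. That $\widehat M_*$ is a ring containing $M_*(\Sp 4(\ZZ))$ is immediate: the Fourier--Jacobi coefficients of a product are $(fg)_m = \sum_{a+b=m} f_a g_b$, a Jacobi form of index~$m$ because Jacobi forms multiply correctly, and the symmetry relation $c(\,\cdot\,;n,r,m) = c(\,\cdot\,;m,r,n)$ on the Siegel Fourier coefficients --- that is, $\GL 2(\ZZ)$-invariance, up to the weight-dependent sign, of the coefficient attached to the half-integral binary form with entries $n,\ r/2,\ m$ --- is visibly stable under multiplication. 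Since $\widehat M_*$ is a domain (it embeds into $\cO(\HS\times\CC)[[q']]$), granting finite generation the usual determinant (Cayley--Hamilton) trick shows $\widehat M_*$ is integral over $M_*(\Sp 4(\ZZ))$, so every $f \in \widehat M_k$ satisfies a monic relation $f^{\,d} + a_1 f^{\,d-1} + \dots + a_d = 0$ with all $a_j$ honest Siegel modular forms.

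I would then convert this relation into convergence by a standard argument. Fixing $(\tau_0,z_0)\in\HS\times\CC$ and reading the monic identity as one of formal power series in $q' = e^{2\pi i \tau'}$ over the ring of functions holomorphic in $\tau'$ near the cusp, the coefficients $a_j(\tau_0,z_0,\,\cdot\,)$ are holomorphic there because their own Fourier--Jacobi expansions converge, and a formal power series that is a root of a monic polynomial with convergent coefficients is itself convergent --- it must agree, as a formal series, with one of the Puiseux branches of the associated algebraic function, all of which converge. Hence the Fourier--Jacobi series $\sum_m \phi_m(\tau,z)\,q'^m$ converges in a neighborhood of the cusp; the branch is bounded (via the usual root bound), and since the defining relations make the series formally $\Sp 4(\ZZ)$-invariant --- they encode invariance under the Klingen parabolic together with one further generator --- standard analytic-continuation and gluing arguments produce a holomorphic function on all of $\HS_2$ with the prescribed Fourier--Jacobi expansion and the $\Sp 4(\ZZ)$-transformation law, hence a Siegel modular form of weight $k$ by Koecher's principle.

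The real content, and the main obstacle, is the finite generation of $\widehat M_*$ over $M_*(\Sp 4(\ZZ))$: equivalently, a bound polynomial in the index $m$ on the size of the Jacobi forms $\phi_m$ that can occur in a formal expansion. Here I would work through the theta decomposition, rewriting $\phi_m$ as a vector-valued modular form $(h_{m,\mu})_{\mu\in\ZZ/2m\ZZ}$ of weight $k-\tfrac12$ for the Weil representation of the discriminant form $\ZZ/2m\ZZ$, so that the symmetry relation becomes a dense system of linear identities among the Fourier coefficients of the $h_{m,\mu}$ across varying $m$. Two facts should combine to force finiteness: the spaces of such vector-valued forms have dimension growing only linearly in $m$, while reduction theory for $\GL 2(\ZZ)$-classes of positive semidefinite half-integral binary forms lets one rewrite any Fourier coefficient $c(\phi_m;n,r)$ of $f$ as some $c(\phi_{n'};m',r')$ with $n'$ of size $O\bigl(\sqrt{4mn-r^2}\,\bigr)$, hence controlled by Jacobi forms of strictly smaller index, so that an induction bottoms out against a fixed finite amount of data. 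Pushing this through the symmetry relations --- uniformly in $k$, since the valence bound needed to pin down the $h_{m,\mu}$ scales with the weight, which one circumvents by multiplying with a fixed nonzero Siegel cusp form to reduce to a convenient weight range --- should yield a finite generating set, which one expects to consist of genuine Siegel modular forms, sharpening the conclusion to $\widehat M_k = M_k(\Sp 4(\ZZ))$ outright. Keeping this bookkeeping tight enough that the estimate does not degrade with the index is exactly where the difficulty lies; odd weights and normalization conventions are routine once the core estimate is in hand.
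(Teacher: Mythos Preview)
Your architecture is reasonable and in fact close in spirit to the paper's, but there is a genuine gap: you never actually prove the finite generation (equivalently, the integrality) on which everything else rests. The paragraph ending with ``Keeping this bookkeeping tight enough \ldots\ is exactly where the difficulty lies'' is an honest admission that the sketch via theta decomposition and reduction theory stops short of any estimate. What you need, concretely, is that the ``new'' data at index~$m$---the part of~$\phi_m$ not already determined by $\phi_0,\dots,\phi_{m-1}$---lives in $\rmJ_{k,m}[m]$ (Jacobi forms of vanishing order~$\ge m$), and that $\sum_{m\ge 0}\dim \rmJ_{k,m}[m]$ is finite with the right growth in~$k$. You gesture at both facts but supply neither; reduction theory only tells you the first, and ``dimension grows linearly in~$m$'' is about $\rmJ_{k,m}$, not $\rmJ_{k,m}[m]$, and gives a divergent sum.

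The paper does not attempt finite generation. It proves the asymptotic dimension bound $\dim \rmF\rmM^{(2)}_{k}\le \dim \rmM^{(2)}_{k}+O(k^2)$ directly, by embedding $\rmJ_{k,m}[m]\hookrightarrow\bigoplus_{\nu=0}^{m}\rmM^{(1)}_{k+2\nu}[m]$ via the Eichler--Zagier differential operators~$\cD_\nu$ and then summing the resulting generating series in closed form; the miracle is that this sum collapses to exactly the Hilbert series of $\bigoplus_k \rmM^{(2)}_k$. From this bound the paper does \emph{not} extract a monic relation. Instead, a linear pigeonhole (comparing $\dim \rmM^{(2)}_{k_0}$ against the excess $\dim\rmF\rmM^{(2)}_{k+k_0}-\dim\rmM^{(2)}_{k+k_0}=O(k_0^2)$) produces a single $\Psi\in\rmM^{(2)}_{k_0}$ with $\Psi\Phi$ genuinely modular, so that $\Phi=(\Psi\Phi)/\Psi$ is a \emph{meromorphic} Siegel modular form. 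The passage from meromorphic to holomorphic is then handled by a separate regularity theorem for meromorphic functions in two variables whose partial Taylor coefficients along a divisor extend holomorphically---a nontrivial function-theoretic argument, not the ``standard analytic continuation and gluing'' you invoke. Your Puiseux/root-bound endgame would be cleaner \emph{if} you had integrality, but without it you are in the same position as the paper (a non-monic algebraic relation, hence possible poles along the zero locus of the leading coefficient), and then you need exactly the kind of regularity argument you were hoping to avoid.
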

\noindent A more detailed statement is presented in Theorem~\ref{thm:main-theorem-fourier-jacobi}.

The significance of formal Fourier Jacobi expansions stems from their relation to families of certain algebraic cycles on Shimura varieties of orthogonal type.  The study of cycles, that is, subvarieties of varieties, is a prominent theme in algebraic geometry, pursued, e.g., in~\cite{Le86, Fa90, Zh97}.  Codimension~$r$ cycles on a variety~$X$ are classified up to rational equivalence by the $r$th Chow group $\CH^r(X)_\CC$, first studied in~\cite{Ch56}.  In most cases, little is now about it.  Using definite subspaces in indefinite quadratic spaces, Kudla produced cycles of arbitrary codimension on Shimura varieties of orthogonal type, which he called ``special cycles''~\cite{Ku97}.  Special cycles generalize interesting geometric configurations.  For examples, CM points on modular curves and Hirzebruch's and Zagier's curves $T_N$ on Hilbert modular surfaces~\cite{HZ76} are included in this notion.  As special cycles are related explicitly to geometry of lattices, one hopes, they are easier to study than cycles that do not come with this additional information.  Kudla conjectured the following.
\begin{conjecture}
\label{conj:kudla}
The generating function of special cycles of codimension~$r$ is a Siegel modular form of degree~$r$.
\end{conjecture}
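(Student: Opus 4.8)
The plan is to establish the conjecture in codimension~$2$ as a consequence of Theorem~\ref{thm:main-theorem-fourier-jacobi}; the cases $r\le 1$ are classical, and $r>2$ lies beyond the degree~$2$ result at hand. Let $X$ be the orthogonal Shimura variety attached to a rational quadratic space of signature $(n,2)$, and for a positive semidefinite half-integral symmetric matrix $T\in\Mat{2}(\QQ)$ write $[Z(T)]\in\CH^2(X)_\CC$ for Kudla's special cycle class (when $\rk T<2$ the cycle is decorated with the appropriate power of the tautological class so as to land in $\CH^2$). The object to study is the generating series
\[
  A(\tau)\;=\;\sum_{0\le T}[Z(T)]\,e^{2\pi i\,\tr(T\tau)},\qquad \tau\in\HS_2.
\]
Setting $\tau=\left(\begin{smallmatrix}\tau_1&z\\ z&\tau_2\end{smallmatrix}\right)$ and $q'=e^{2\pi i\tau_2}$ and grouping the terms of $A$ according to the lower-right entry $m$ of $T$, one gets $A=\sum_{m\ge 0}\phi_m(\tau_1,z)\,q^{\prime\,m}$, where $\phi_m$ is the formal series whose $(n,r)$-th Fourier coefficient is $[Z(\left(\begin{smallmatrix}n&r/2\\ r/2&m\end{smallmatrix}\right))]$. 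It thus suffices to show that $A$ is a formal Fourier Jacobi expansion of degree~$2$ and weight $k=1+n/2$: Theorem~\ref{thm:main-theorem-fourier-jacobi} then identifies $A$ with the Fourier Jacobi expansion of a Siegel modular form of degree~$2$ and weight~$k$. Two things must be checked, that each $\phi_m$ is a Jacobi form of weight~$k$ and index~$m$, and the symmetry $c(\phi_m;n,r)=c(\phi_n;m,r)$.

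The symmetry is immediate from Kudla's construction~\cite{Ku97}: the class $[Z(T)]$ depends on $T$ only through its $\GL{2}(\ZZ)$-equivalence class, and conjugating $T$ by $\left(\begin{smallmatrix}0&1\\ 1&0\end{smallmatrix}\right)$ interchanges the two diagonal entries, so $[Z(\left(\begin{smallmatrix}n&r/2\\ r/2&m\end{smallmatrix}\right))]=[Z(\left(\begin{smallmatrix}m&r/2\\ r/2&n\end{smallmatrix}\right))]$, and these two classes are $c(\phi_m;n,r)$ and $c(\phi_n;m,r)$ respectively.

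The substantial point is that the $\phi_m$ are Jacobi forms, and here I would feed in the known codimension~$1$ case. For $m=0$ only $r=0$ contributes, and $\phi_0$ is, up to multiplication by a fixed divisor class, the genus-one generating series $\sum_{n\ge 0}[Z(n)]\,e^{2\pi i n\tau_1}$ of special divisors, whose modularity of weight~$k$ is Borcherds' theorem (refined to the Chow group by Bruinier). For $m>0$ one picks a vector $x$ of norm $m$; its orthogonal complement cuts out an embedded orthogonal Shimura variety $X_x\hookrightarrow X$ of signature $(n-1,2)$, and Kudla's inductive description rewrites the $m$-part of $A$, after unfolding over the relevant orbit of such vectors, in terms of the genus-one special-divisor generating series on the $X_x$, pushed forward to $X$. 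Modularity of the latter, again by Borcherds/Bruinier, passes --- via the standard theta correspondence between vector-valued modular forms for the Weil representation of the relevant discriminant form and Jacobi forms of index~$m$ --- to the statement that $\phi_m$ is a Jacobi form of weight~$k$ and index~$m$. This verifies both conditions, so $A$ is a formal Fourier Jacobi expansion and the conjecture follows in codimension~$2$.

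The main obstacle is this last step: one is not permitted to assume modularity of $A$ itself, so the Jacobi-form property of the $\phi_m$ has to be extracted from genuinely lower-dimensional data --- modularity of codimension~$1$ cycles on the family of embedded Shimura varieties $X_x$ --- and matching Kudla's unfolding with the theta decomposition takes some care with discriminant forms and with the rank-deficient contributions. Everything else is formal, the analytic content having been absorbed into Theorem~\ref{thm:main-theorem-fourier-jacobi}.
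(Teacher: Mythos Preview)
Your proposal is correct and follows essentially the same route as the paper: the paper establishes only the $r=2$ case (Theorem~\ref{thm:conj:kudla-is-true}), citing Zhang~\cite{Zh09} for the fact that the generating series is a formal Fourier Jacobi expansion --- proved there exactly as you sketch, via embedded sub-Shimura varieties and Borcherds' $r=1$ theorem --- and then applying Theorem~\ref{thm:main-theorem-fourier-jacobi}. You re-derive Zhang's step rather than cite it, but the logical structure is identical.
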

\begin{theorem}[{Borcherds~\cite{Bo99,Bo00}}]
Conjecture~\ref{conj:kudla} is true for $r = 1$.
\end{theorem}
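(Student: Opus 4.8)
The plan is to realise the codimension-$1$ special cycles as Heegner divisors on an orthogonal Shimura variety and to deduce modularity of their generating function from Borcherds' modularity criterion together with the theory of Borcherds products. Fix an even lattice $L$ of signature $(2,n)$ with dual $L'$ and finite discriminant group $L'/L$, let $\cD$ be a connected component of the hermitian symmetric domain of the real orthogonal group of signature $(2,n)$, realised inside a quadric in $\bbP(L\otimes\CC)$, and set $X_\Gamma=\Gamma\backslash\cD$ for a suitable arithmetic subgroup $\Gamma$. For $m>0$ and $\mu\in L'/L$ one has the Heegner divisor $Z(m,\mu)=\sum_\lambda\lambda^\perp$, the sum over $\Gamma$-orbits of $\lambda\in\mu+L$ with $\tfrac12(\lambda,\lambda)=m$; unravelling Kudla's construction identifies these with the special cycles of codimension~$1$, and, since a Siegel modular form of degree~$1$ is nothing but an elliptic modular form, Conjecture~\ref{conj:kudla} for $r=1$ becomes the assertion that
\begin{gather*}
  \Phi(\tau)\;=\;[Z(0,0)]\;+\;\sum_{\mu\in L'/L}\ \sum_{m>0}\ [Z(m,\mu)]\,q^{m}\,e_\mu
\end{gather*}
is a modular form of weight $1+n/2$ for the Weil representation $\rho_L$ of $\Mp{2}(\ZZ)$ on $\CC[L'/L]$ --- with $(e_\mu)_{\mu\in L'/L}$ its standard basis --- valued in a finite-dimensional space $W$ of divisor classes, for instance the span of the images of the classes $[Z(m,\mu)]$ in the cohomology of a smooth compactification of $X_\Gamma$, which is all that the conjecture requires. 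Here $[Z(0,0)]=-\tfrac12\,c_1(\cL)$, with $\cL$ the line bundle of weight-$1$ modular forms, a normalisation chosen so that the divisor relations invoked below take a clean form.

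First I would invoke Borcherds' modularity criterion for $\CC[L'/L]$-valued forms: a formal $q$-expansion $\sum_{m>0,\mu}c(m,\mu)\,q^m e_\mu$ with coefficients in a finite-dimensional space, prescribed a constant term, is the Fourier expansion of a holomorphic modular form of weight $1+n/2$ and type $\rho_L$ if and only if it is orthogonal, under the residue pairing that extracts the constant term of a weight-$2$ product, to the principal part of every weakly holomorphic modular form of weight $1-n/2$ of the dual type $\rho_L^*$. Applying an arbitrary linear functional $\alpha\in W^*$, it then suffices to prove that for every such weakly holomorphic $F=\sum_m c_F(m,\mu)\,q^m e_\mu$ one has $\sum_{m\ge0,\mu}c_F(-m,\mu)\,[Z(m,\mu)]=0$ in $W$.

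The second, and decisive, step is that these are exactly the divisor identities satisfied by Borcherds products. For $F$ as above with integral principal part, the regularised theta lift of $F$ against the Siegel theta kernel $\Theta_L$, integrated over $\Mp{2}(\ZZ)\backslash\HS$, is after exponentiation a meromorphic modular form $\Psi(F)$ on $X_\Gamma$ of weight $c_F(0,0)/2$ whose divisor is $\sum_{m>0,\mu}c_F(-m,\mu)\,Z(m,\mu)$. Since a meromorphic modular form of weight $w$ is a section of $\cL^{\otimes w}$, its divisor represents $w\,c_1(\cL)=-2w\,[Z(0,0)]$; inserting $w=c_F(0,0)/2$ and the stated divisor of $\Psi(F)$ yields precisely $\sum_{m\ge0,\mu}c_F(-m,\mu)\,[Z(m,\mu)]=0$ in $\mathrm{Pic}(X_\Gamma)_\CC$, and hence in $W$. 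Principal parts with rational, and then --- using the $\QQ$-structure on the relevant spaces of modular forms --- real coefficients reduce to the integral case by linearity, so the criterion of the previous paragraph is met and $\Phi$ is modular; the argument is uniform in $n$.

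I expect the main obstacle to be the analytic content of the second step, namely the construction of $\Psi(F)$ by the singular theta correspondence: regularising the divergent integral of $F$ against $\Theta_L$, showing that its exponential descends to a meromorphic modular form on $X_\Gamma$, reading off the weight $c_F(0,0)/2$, and carrying out the local analysis that identifies the logarithmic singularities of this form along each $\lambda^\perp$ so as to compute its divisor as the prescribed sum of Heegner divisors. Remaining subsidiary points needing care are the choice of a toroidal compactification and the control of boundary contributions --- particularly in small dimension, where $n=1$ recovers the classical Gross--Kohnen--Zagier situation --- the finite-dimensionality of $W$ so that the modularity criterion applies, and, if one wants the statement literally in $\CH^1(X_\Gamma)_\CC$ rather than in cohomology, the verification that the Picard-group relations above already suffice; they do, Borcherds' argument operating throughout at the level of line bundles.
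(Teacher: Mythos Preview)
The paper does not prove this theorem; it is stated in the introduction with an attribution to Borcherds~\cite{Bo99,Bo00} and no argument is given. Your sketch is a faithful outline of Borcherds' own proof in those references: the modularity criterion (a formal $q$-series is a holomorphic vector-valued modular form of weight $1+n/2$ for $\rho_L$ if and only if it annihilates the principal parts of all weakly holomorphic forms of the dual weight and type), combined with the divisor formula for Borcherds products coming from the regularised theta lift, which supplies exactly those relations among Heegner divisors in $\mathrm{Pic}(X_\Gamma)_\CC$. So there is nothing to compare against in the present paper, and your proposal is consistent with the cited source.

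One small point: you introduce the finite-dimensional space $W$ as if it were needed to apply the criterion, but Borcherds' argument does not assume this a priori; it works component-wise via arbitrary linear functionals on $\mathrm{Pic}(X_\Gamma)_\CC$, and finite-dimensionality of the span of Heegner divisors is rather a \emph{consequence} of the modularity. Also, be careful with the sign convention on the signature: the paper uses $(n,2)$ while you use $(2,n)$, which is harmless but worth aligning if you incorporate this into the text.
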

As our main application, we establish a further case.
\begin{theorem}
\label{thm:conj:kudla-is-true}
Conjecture~\ref{conj:kudla} is true for $r = 2$.
\end{theorem}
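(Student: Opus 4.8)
The plan is to show that Kudla's generating series for special cycles of codimension~$2$, which a priori is only a formal $q$\nbd series with coefficients in a Chow group, defines a \emph{formal} Fourier Jacobi expansion of degree~$2$ (now with coefficients in $\CH^2(X)_\CC$ rather than in a field), and then to invoke Theorem~\ref{thm:main-theorem-fourier-jacobi}. Fix the orthogonal Shimura variety $X$ attached to an even lattice $L$ of signature $(n,2)$, write $Z(T) \in \CH^2(X)_\CC$ for Kudla's special cycle of a positive semidefinite half-integral symmetric $2 \times 2$ matrix $T$, and put $k = 1 + \tfrac{n}{2}$. The object of interest is $\Phi(\tau) = \sum_{T \ge 0} Z(T)\, q^T$ for $\tau \in \HS_2$, where $q^T = e^{2\pi i\, \mathrm{tr}(T\tau)}$. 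Writing $\tau = \left(\begin{smallmatrix} \tau_1 & z \\ z & \tau_2 \end{smallmatrix}\right)$ and $q' = e^{2\pi i \tau_2}$, one has $\Phi = \sum_{m \ge 0} \phi_m\, q^{\prime m}$, where $\phi_m$ collects the classes $Z\!\left(\begin{smallmatrix} n & r/2 \\ r/2 & m \end{smallmatrix}\right)$ as $n$ and $r$ vary. It is enough to verify the two defining conditions of a formal Fourier Jacobi expansion for the $\phi_m$; since Theorem~\ref{thm:main-theorem-fourier-jacobi} is stated for scalar coefficients, one applies it to $\ell \circ \Phi$ for $\ell$ running over $\Hom(\CH^2(X)_\CC, \CC)$ and then uses finite dimensionality of the space of Siegel modular forms of weight~$k$ to conclude that $\Phi$ lies in $M_k \otimes \CH^2(X)_\CC$, which is exactly the assertion of Conjecture~\ref{conj:kudla} for $r = 2$.

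Condition~(ii), $c(\phi_m; n, r) = c(\phi_n; m, r)$, is immediate, because both coefficients equal $Z\!\left(\begin{smallmatrix} n & r/2 \\ r/2 & m \end{smallmatrix}\right)$ and Kudla's construction gives $Z({}^t u T u) = Z(T)$ for $u \in \GL{2}(\ZZ)$, which one applies to $u = \left(\begin{smallmatrix} 0 & 1 \\ 1 & 0 \end{smallmatrix}\right)$. The substance of condition~(i) is that each $\phi_m$ is a Jacobi form of weight~$k$ and index~$m$. For $m > 0$ I would fix a vector $x$ of norm~$m$ in $L$; the sub\nbd Shimura variety $Z(x) \subset X$ is again of orthogonal type, now of signature $(n-1,2)$, and every rank~$2$ special cycle one of whose defining vectors lies in the $\GL{2}(\ZZ)$\nbd orbit of $x$ restricts on $Z(x)$ to a special \emph{divisor}. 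Collecting terms, $\phi_m$ becomes the pushforward along $Z(x) \hookrightarrow X$ of a generating series of special divisors on $Z(x)$, and, via the theta decomposition of index~$m$ Jacobi forms into vector-valued modular forms of weight $k - \tfrac12$ for a Weil representation of rank~$2m$, the statement that $\phi_m$ is a Jacobi form becomes precisely modularity of that divisor generating series for the appropriate Weil representation --- that is, Borcherds' theorem, the case $r = 1$. The degenerate coefficient $\phi_0$ is treated in the same spirit, being assembled from pullbacks of the generating series of special divisors on $X$ (again the case $r=1$) together with the first Chern class of the automorphic line bundle on $X$.

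The main obstacle is exactly this middle step: realizing the Fourier Jacobi coefficients as honest modular generating series of special divisors on the smaller Shimura varieties $Z(x)$, and controlling the normalizations along the way --- the precise dictionary between scalar Jacobi forms of index~$m$ and vector-valued modular forms for a Weil representation of rank~$2m$, the contributions of degenerate and of boundary terms on the noncompact variety~$X$, and the combinatorics ensuring that the functional equations in the modular variable~$\tau_1$ and in the elliptic variable~$z$ assemble into a single Jacobi transformation law. Once this is established, together with the trivially verified condition~(ii), $\Phi$ is a formal Fourier Jacobi expansion of degree~$2$ valued in $\CH^2(X)_\CC$, and Theorem~\ref{thm:main-theorem-fourier-jacobi} completes the proof.
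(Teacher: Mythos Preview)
Your approach is essentially the one the paper takes: show that Kudla's generating series is a formal Fourier Jacobi expansion of weight $1+\tfrac{n}{2}$ (the hard middle step), then invoke Theorem~\ref{thm:main-theorem-fourier-jacobi} componentwise via linear functionals on $\CH^2(X)_\CC$. The paper does not carry out the middle step itself but cites Zhang's thesis, where exactly your strategy---reducing the Jacobi property of each $\phi_m$ to Borcherds' codimension~$1$ modularity on the sub-Shimura varieties cut out by a single vector---is made precise; the symmetry condition~(ii) is handled there just as you indicate, via the $\GL{2}(\ZZ)$-invariance $Z(u^\T T u,\mu u)=Z(T,\mu)$.
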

\noindent A more specific description of this modularity result is contained in Theorem~\ref{thm:main-theorem-kudla}.

Our affirmation of Kudla's prediction in the case $r = 2$ yields rich geometric information.  It allows us to compute relations that were inaccessible before.  Compare this with results obtain in the 80's by Kudla and Millson.  They examined modularity of generating functions of intersection numbers of special cycles, which, they found, arise from (additive) theta lifts~\cite{KM86, KM87, KM90}.  This way, they discovered relations of such intersection numbers.


\subsection{Siegel modular forms}
\label{ssec:introduction:siegel-modular-forms}

We now discuss the theory of formal Fourier Jacobi series in greater detail.  Afterward, we turn our attention to implications for geometry.

In order to explain Theorem~\ref{thm:main-theorem-fourier-jacobi}, which implies Theorem~\ref{thm:main-theorem-kudla} and hence Theorem~\ref{thm:conj:kudla-is-true}, we need to make several definitions.  For the time being, we restrict ourselves to the case of integral weights.  Fix $k, l \in \ZZ$, and let $\rho$ be a finite dimensional, unitary representation of the symplectic group $\Sp{2}(\ZZ) \subset \Mat{4}(\ZZ)$ with representation space~$V_\rho$.  Denote the $l$th symmetric power of the canonical representation of $\GL{2}(\CC)$ on $\CC^2$ by $\sigma_l$, and write $V_l$ for its representation space.  A (doubly vector valued) Siegel modular forms is a holomorphic function~$\Phi$ from the Siegel upper half space~$\HS^{(2)}$ to $V_l \otimes V_\rho$ that is invariant under the modular transformations
\begin{gather*}
  (\det{}^{k} \otimes \sigma_l)^{-1}(C Z + D )\, \rho(\gamma)^{-1}\, \Phi\big((A Z + B) (C Z + D)^{-1}\big)
\quad
 \big( \gamma = \left(\begin{smallmatrix}A & B \\ C & D\end{smallmatrix}\right) \in \Sp{2}(\ZZ) \big)
\text{.}
\end{gather*}
Every Siegel modular form~$\Phi$ admits an absolutely convergent Fourier Jacobi expansion
\begin{gather*}
  \sum_{0 \le m \in \QQ} \phi_m(\tau, z) \, q^{\prime\, m}
\text{,}
\end{gather*}
where $\phi_m \in \rmJ_{k, m}(\sigma_l \otimes \rho)$ is a (doubly vector valued) Jacobi form~(cf.~\cite{EZ85, Sk08, IK11} and Section~\ref{sec:jacobi-forms}).  Jacobi forms have likewise Fourier expansions
\begin{gather*}
  \phi_m(\tau, z)
=
  \sum_{0 \le n \in \QQ,\, r \in \QQ} c(\phi_m; n, r)\, q^n \zeta^r
\text{.}
\end{gather*}
By modularity of~$\Phi$, their Fourier coefficients satisfy
\begin{gather}
\label{eq:involution-condition-introduction}
  c(\phi_m; n, r)
=
  (\det{}^k \otimes \sigma_l)^{-1}(S)\, \rho^{-1}(\rot(S))\, c(\phi_n; m, r)
\end{gather}
for all $0 \le m, n \in \QQ$ and all $r \in \QQ$, where
\begin{gather*}
  S
=
  \left(\begin{smallmatrix} 0 & 1 \\ 1 & 0 \end{smallmatrix}\right)
\in
  \GL{2}(\ZZ)
\quad\text{and}\quad
  \rot(S)
=
  \left(\begin{smallmatrix} S & \\ & S \end{smallmatrix}\right)
\in
  \Sp{2}(\ZZ)
\text{.}
\end{gather*}
Our main theorem can be considered as a converse.
\begin{theorem}
\label{thm:main-theorem-fourier-jacobi}
Let
\begin{gather*}
  \sum_{0 \le m \in \QQ} \phi_m(\tau, z) \, q^{\prime\, m}
\end{gather*}
be a formal expansion in $q'$ with coefficients in $\rmJ_{k, m}(\sigma_l \otimes \rho)$, where $k \in \frac{1}{2} \ZZ$ and $l \in \ZZ$.  If the Fourier coefficients of all $\phi_m$ satisfy (\ref{eq:involution-condition-introduction}) for all $0 \le m, n \in \QQ$ and all $r \in \QQ$, then this series converges absolutely and defines a Siegel modular form.
\end{theorem}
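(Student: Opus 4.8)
The plan is to separate the statement into the routine implication ``convergence $\Rightarrow$ modularity'' and the genuine problem, which is the convergence itself.

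First I would dispose of modularity. Suppose the series $\Phi := \sum_{m} \phi_m(\tau,z)\,q^{\prime\,m}$ converges absolutely and locally uniformly on $\HS^{(2)}$; then it is a holomorphic $V_l\otimes V_\rho$--valued function there. Since each $\phi_m$ is a Jacobi form of weight~$k$ and index~$m$ for $\sigma_l\otimes\rho$ and $\Phi$ is $\ZZ$--periodic in $\tau'$, the function $\Phi$ is invariant under the Klingen parabolic subgroup $P$ of $\Sp{2}(\ZZ)$ (the stabilizer of a rank--one isotropic subspace): the Jacobi transformation law in $(\tau,z)$ together with $q'$--periodicity is exactly invariance under the generators of~$P$. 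Hypothesis~(\ref{eq:involution-condition-introduction}) is precisely the identity among Fourier coefficients equivalent to invariance of $\Phi$ under $\rot(S)$, which interchanges the two diagonal entries of~$Z$. As $P$ together with $\rot(S)$ generate $\Sp{2}(\ZZ)$ --- this is the group--theoretic fact underlying the Fourier Jacobi expansion --- and as for half--integral $k$ the pertinent multiplier is the theta multiplier, already present in the definition of Jacobi forms of weight~$k$, the function $\Phi$ is a Siegel modular form of the asserted type, and the given series is its Fourier Jacobi expansion by uniqueness of Fourier Jacobi coefficients. So everything reduces to showing that the Fourier coefficients $a(T) := c(\phi_m;n,r)$, indexed by positive semidefinite $T = \left(\begin{smallmatrix} n & r/2 \\ r/2 & m \end{smallmatrix}\right)$, grow at most polynomially in the entries of~$T$; since $e(\tr(TZ))$ decays exponentially in $\tr(T\,\Im Z)$, such a bound yields absolute and locally uniform convergence on $\HS^{(2)}$.

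Before attacking the growth bound I would localize the problem by proving that the space $\widehat{\rmM}_k$ of formal Fourier Jacobi expansions of weight~$k$ (for fixed $\sigma_l\otimes\rho$) is finite--dimensional. Via the theta decomposition $\phi_m = \sum_{\mu\bmod 2m} h_{m,\mu}\,\theta_{m,\mu}$, the Jacobi form $\phi_m$ is determined by, and its Fourier coefficients are controlled by, the development coefficients $c(\phi_m;n,r)$ with $4mn-r^2 \le C_k\,m$, where $C_k$ depends only on~$k$ (and $l,\rho$): the $h_{m,\mu}$ have weight $k-\tfrac12$ for a Weil representation of rank~$2m$, and the corresponding space of modular forms has dimension $O_k(m)$, so $O_k(1)$ coefficients per component already determine the form. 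Taking representatives with $|r|\le m$, a development coefficient has $n \le m/4 + C_k/4$, hence $n<m$ once $m$ exceeds an explicit $N_0(k)$; for such~$m$, (\ref{eq:involution-condition-introduction}) rewrites every development coefficient of $\phi_m$ as a Fourier coefficient of some $\phi_n$ with $n<m$. Thus $\phi_m$ is determined by $\phi_0,\dots,\phi_{m-1}$, so inductively $\widehat{\rmM}_k \hookrightarrow \bigoplus_{m<N_0(k)}\rmJ_{k,m}(\sigma_l\otimes\rho)$, a finite--dimensional space; it therefore suffices to prove the polynomial bound for a single fixed expansion.

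The heart is the growth estimate, and I expect it to be the main obstacle. Since each $\phi_m$ is a genuine Jacobi form, the elementary Hecke bound gives $|c(\phi_m;n,r)| \ll_k \|\phi_m\|\,(n+m)^{O(k)}$ for a suitable norm on $\rmJ_{k,m}(\sigma_l\otimes\rho)$, so one must show that $\|\phi_m\|$ grows polynomially in~$m$. Restricting to $z=0$ already gives a partial result: by~(\ref{eq:involution-condition-introduction}) the forms $\phi_m(\tau,0)\in\rmM_k$ assemble into a symmetric element of $\rmM_k\otimes\rmM_k$, and reading off coefficients exhibits $\phi_m(\tau,0)$ as an $m$--independent linear combination of the index--Taylor coefficients of finitely many classical modular forms, whence $\|\phi_m(\cdot,0)\|\ll m^{O(k)}$; but the restriction $z\mapsto 0$ has a kernel, and the analogous argument for the higher $z$--Taylor coefficients yields only a recursion on the index that loses a polynomial factor at each step, hence is exponential in~$m$. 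Circumventing this requires a structural input: Cauchy products preserve~(\ref{eq:involution-condition-introduction}), so $\widehat{\rmM}_\bullet = \bigoplus_k\widehat{\rmM}_k$ is a graded $\CC$--algebra containing the Noetherian ring $\rmM^{(2)}_\bullet$ of Siegel modular forms of degree~$2$, over which (by the previous paragraph) $\widehat{\rmM}_\bullet$ is a torsion--free, finitely generated module; fixing a nonzero Siegel cusp form $\chi$ of degree~$2$ (e.g.\ the Igusa cusp form), a sufficiently high power $\chi^N\Phi$ then has all its Fourier coefficients forced to obey the genuine polynomial growth of an actual modular form, and these bounds back--propagate to a polynomial bound on $\|\phi_m\|$ (one may also first subtract genuine Siegel Eisenstein series to pass to the cuspidal case, where their Fourier Jacobi coefficients are explicitly bounded). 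Reconciling this algebraic mechanism with the analytic estimate --- upgrading ``$\phi_m$ is determined by lower--index data'' to a quantitative, uniform--in--$m$ polynomial bound --- is where the real work lies; the two technical ingredients are uniform--in--rank estimates for holomorphic vector--valued modular forms of a Weil representation in terms of their development coefficients, and the finite generation of $\widehat{\rmM}_\bullet$ over $\rmM^{(2)}_\bullet$. Once the polynomial bound on the $a(T)$ is established, the first part of the argument completes the proof.
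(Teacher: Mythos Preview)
Your reduction to a growth bound and your finite-dimensionality argument for $\widehat{\rmM}_k$ (via the symmetry condition forcing $\phi_m$ to be determined by lower-index data once $m$ is large) are correct in spirit; the latter is essentially the paper's Proposition~\ref{prop:formal-fourier-jacobi-determination-of-coefficients}. However, the core of your proposal has two genuine gaps.

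First, finite-dimensionality of each graded piece $\widehat{\rmM}_k$ does \emph{not} imply that $\widehat{\rmM}_\bullet$ is finitely generated as a module over $\rmM^{(2)}_\bullet$; for that one needs a comparison of the growth rates of $\dim\widehat{\rmM}_k$ and $\dim\rmM^{(2)}_k$, which you have not established. The paper proves exactly such a bound (Theorem~\ref{thm:generating-series-for-formal-fourier-jacobi}): $\dim\rmF\rmM^{(2)}_{k,l}(\rho)\le\dim\rmM^{(2)}_{k,l}(\rho)+O(k^2)$ as even $k\to\infty$, via the embedding $\rmF\rmM^{(2)}_{k,l}(\rho)\hookrightarrow\bigoplus_m\rmJ_{k,m}(\sigma_l\otimes\rho)[m]$ followed by the Eichler--Zagier operators $\cD_\nu$ and a generating-function computation. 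From this it follows by pigeonhole that for $k_0$ large the subspaces $\rmM^{(2)}_{k_0}\cdot\Phi$ and $\rmM^{(2)}_{k+k_0,l}(\rho)$ of $\rmF\rmM^{(2)}_{k+k_0,l}(\rho)$ must intersect nontrivially, producing a nonzero $\Psi\in\rmM^{(2)}_{k_0}$ with $\Psi\Phi$ a genuine Siegel modular form. Your ``$\chi^N\Phi$ obeys polynomial growth'' step never reaches an analogous conclusion, because you have no mechanism that singles out a convergent element of $\rmM^{(2)}_\bullet\cdot\Phi$.

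Second --- and this is the deeper omission --- even once one knows $\Psi\Phi\in\rmM^{(2)}$ for some $\Psi$, the quotient $(\Psi\Phi)/\Psi$ is only a \emph{meromorphic} Siegel modular form, since every nonconstant $\Psi$ has zeros on $\HS^{(2)}$. Your ``back-propagation'' of bounds is precisely the problem of showing this quotient is holomorphic, and it is not automatic. The paper devotes all of Section~\ref{sec:regularity} to this: Theorem~\ref{thm:basic_regularity} says that if $f(x,y)$ is meromorphic near $0\in\CC^2$, $\{y=0\}$ is not polar, the polar divisors do not branch at $0$, and every $\partial_y^n\big|_{y=0}f$ extends holomorphically across $x=0$, then $f$ is holomorphic at $0$. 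Applied with $y\leftrightarrow q'$ after passing to a resolution of the Satake boundary (Theorem~\ref{thm:siegel-regularity}), this uses exactly the datum you have --- holomorphy of each $\phi_m$ on all of $\HS^\rmJ$ --- to rule out poles of $(\Psi\Phi)/\Psi$. This function-theoretic regularity step is the key idea you are missing; without it, no amount of module-theoretic structure over $\rmM^{(2)}_\bullet$ gets you from ``$\Psi\Phi$ converges'' to ``$\Phi$ converges''. The half-integral and odd-weight cases are then reduced to the even-weight case by tensoring with a vector-valued form whose components have no common zero (Lemma~\ref{la:non-vanishing-forms-of-half-integral-weight}), a device your sketch does not provide either.
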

The proof of Theorem~\ref{thm:main-theorem-fourier-jacobi}, which is an unrefined version of Theorem~\ref{thm:formal-fourier-jacobi-series-equal-siegel-modular-forms}, relies on comparing dimensions and on a regularity result for meromorphic functions in two variables.  In the case of even weight~$k$, we bound the asymptotic dimension of spaces of formal Fourier Jacobi expansions as $k \rightarrow \infty$.  It equals the asymptotic dimension of corresponding spaces of Siegel modular forms.  In the proof of Theorem~\ref{thm:formal-fourier-jacobi-series-equal-siegel-modular-forms}, we fix a formal Fourier Jacobi expansion and multiply it with a space of classical Siegel modular forms of sufficiently large weight.  By a dimension argument, we find that the resulting space of formal Fourier Jacobi expansions (of then higher weight) contains at least one Siegel modular form.  From this, we are able to deduce that the formal Fourier Jacobi expansions which we started with represents a meromorphic Siegel modular form.  We then apply our regularity result~(Theorem~\ref{thm:siegel-regularity}) to conclude that it is a holomorphic Siegel modular form. For $k \in \frac{1}{2} \ZZ$, we obtain an even weight Siegel modular form by multiplication (or tensoring) with another suitable Siegel modular form.  This allows us to reduce the general case to the case of even weights.

The special case of $l = 0$ and trivial $\rho$ was considered in~\cite{IPY12}.  Ibukiyama, Poor, and Yuen showed straightforwardly that dimensions of the space of formal Fourier Jacobi expansions and the space of Siegel modular forms are equal.  Their technique, however, relies on very detailed knowledge of dimension formulas, which is not available in the more general setting.

\subsection{Special cycles}

Zhang proved that the generating functions of special cycles, as presented later, are formal Fourier Jacobi expansions~\cite{Zh09}.  He pulled back along embeddings of Shimura curves and employed Borcherds's previous result.  For codimension~$2$ cycles, Theorem~\ref{thm:main-theorem-fourier-jacobi} implies modularity.  In this manner, our approach shows that Kudla's modularity conjecture can be examined by combining a detailed analysis of the codimension~$1$ case -- already pursued by Borcherds -- and rigidity results for formal expansions of Siegel modular forms.

We give details on special cycles and their generating functions.  Let $\cL$ be a lattice of signature $(n, 2)$; Write $\cL^\#$ for its dual.  We consider Shirmura varieties~$X_\Gamma$ of orthogonal type associated to subgroups~$\Gamma$ of $\Orth{}(\cL)$ that act trivially on $\disc\,\cL = \cL^\# \slashdiv \cL$.  By definition, we have
\begin{gather*}
  X_\Gamma
=
  \Gamma \backslash \rmG\rmr^-(\cL \otimes \RR)
\text{,}
\end{gather*}
where $\rmG\rmr^-$ denotes the Grassmannian of maximal negative definite subspaces.  Note that for our choice of $\cL$, $X_\Gamma$ carries the structure of a complex, quasi projective variety.

Given a positive semi-definite, symmetric ``moment matrix'' $0 \le T \in \MatT{r}(\QQ)$ and $\mu \in \disc^r\, \cL = (\disc\,\cL)^r$, there is a special cycle $\{ Z(T, \mu) \} \in \CH^\bullet(X_\Gamma)_\CC$ whose codimension equals the rank of~$T$, $\rk(T)$.  In Section~\ref{sec:special-cycles}, we give a precise definition.
Write $\omega^\vee$ for the anti-canonical bundle on $X_\Gamma$, and denote the canonical basis elements of $\CC[ \disc^r\,\cL ]$ by $\frake_\mu$ ($\mu \in \disc^r\, \cL$).  We prove the following:
\begin{theorem}
\label{thm:main-theorem-kudla}
Let $\Gamma \subset \Orth{}(\cL)$ be a subgroup that fixes $\disc\, \cL$.  Then
\begin{gather*}
  \sum_{\substack{ \mu \in \disc^2\, \cL \\ 0 \le T \in \MatT{2}(\QQ)}}
  \{Z(T, \mu)\} \cdot \{\omega^\vee \}^{2 - \rk(T)}\,
  \exp\big(2 \pi i\, \tr( T Z ) \big)\;
  \frake_\mu
\end{gather*}
is a vector valued Siegel modular form with coefficients in $\CH^2(X_\Gamma)_\CC \otimes \CC[ \disc^2\, \cL ]$, which has weight~$1 + \frac{n}{2}$.
\end{theorem}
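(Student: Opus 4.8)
The plan is to combine the abstract modularity criterion of Theorem~\ref{thm:main-theorem-fourier-jacobi} with the structural result of Zhang~\cite{Zh09}, which asserts that the generating series in question is a formal Fourier Jacobi expansion. Concretely, write $F(Z) = \sum_{\mu, T} \{Z(T,\mu)\}\cdot\{\omega^\vee\}^{2-\rk(T)}\exp(2\pi i\,\tr(TZ))\,\frake_\mu$ for the series of the theorem, with coefficients in the finite-dimensional space $\CH^2(X_\Gamma)_\CC \otimes \CC[\disc^2\,\cL]$. First I would record the weight and the type of automorphy data: the Weil representation of $\Sp{2}(\ZZ)$ attached to the finite quadratic module $\disc\,\cL$ supplies the representation $\rho$, the weight is $k = 1 + \tfrac n2 \in \tfrac12\ZZ$, and there is no symmetric-power twist, so $l = 0$ and $\sigma_l$ is trivial. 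Thus $F$ is, a priori, a formal object of exactly the shape to which Theorem~\ref{thm:main-theorem-fourier-jacobi} applies, \emph{provided} we verify its two hypotheses: (i) each Fourier Jacobi coefficient $\phi_m$ is a genuine (vector valued) Jacobi form of weight $k$ and index $m$ valued in $\CH^2(X_\Gamma)_\CC$, and (ii) the symmetry relation \eqref{eq:involution-condition-introduction} holds among the Fourier coefficients.

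The key steps are then as follows. I would first extract the $m$-th Fourier Jacobi coefficient $\phi_m(\tau,z)$ of $F$ by grouping the terms with lower-right entry of $T$ equal to $m$; its Fourier coefficients are, up to the $\{\omega^\vee\}$-power bookkeeping for degenerate $T$, the classes $\{Z(T,\mu)\}$ themselves. To see that $\phi_m$ is a Jacobi form I would invoke Borcherds's theorem (the $r=1$ case of Conjecture~\ref{conj:kudla}): pulling back $F$ along the natural embeddings of the ``smaller'' Shimura varieties of orthogonal type $\Orth{}(n-1,2) \hookrightarrow \Orth{}(n,2)$ that fix a vector, or equivalently restricting to the sub-series indexed by moment matrices of the form $\left(\begin{smallmatrix} * & * \\ * & m\end{smallmatrix}\right)$, identifies $\phi_m$ with a theta-like generating series whose modularity in $\tau$ is governed by the codimension-$1$ result, while the elliptic transformation in $z$ and the index-$m$ condition come from the quadratic dependence of $Z(T,\mu)$ on the ``$r$''-variable; this is precisely the content of Zhang's argument~\cite{Zh09}. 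The relation (ii) is the most transparent part: it is the geometric statement that $Z(T,\mu)$ depends on $T$ only through its $\GL{2}(\ZZ)$-equivalence class (and on $\mu$ through the corresponding permutation), so that swapping the two rows/columns of $T$ via $S = \left(\begin{smallmatrix}0&1\\1&0\end{smallmatrix}\right)$ interchanges the roles of $m$ and $n$ exactly as in \eqref{eq:involution-condition-introduction}, the cocycle $(\det^k\otimes\sigma_l)^{-1}(S)\,\rho^{-1}(\rot(S))$ being the bookkeeping for the half-integral weight and the Weil representation.

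Having checked (i) and (ii), Theorem~\ref{thm:main-theorem-fourier-jacobi} applies verbatim and yields that $F$ converges absolutely on $\HS^{(2)}$ and defines a (vector valued) Siegel modular form of degree~$2$ and weight $1 + \tfrac n2$ with values in $\CH^2(X_\Gamma)_\CC \otimes \CC[\disc^2\,\cL]$, which is the assertion. I expect the genuine obstacle to be neither step (ii) nor the invocation of Theorem~\ref{thm:main-theorem-fourier-jacobi}, but the careful justification of step (i) in the vector valued, half-integral weight setting: one must ensure that Borcherds's modularity result and Zhang's pullback construction really do produce \emph{holomorphic} Jacobi forms with the correct multiplier system attached to $\disc\,\cL$, including the correct treatment of the degenerate (rank~$<2$) moment matrices via the $\{\omega^\vee\}$-powers, so that the bridge to the purely automorphic input of Theorem~\ref{thm:main-theorem-fourier-jacobi} is clean. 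The remaining bookkeeping — matching conventions for the Weil representation, the anti-canonical bundle, and the sign/normalization of the weight — is routine once this is in place.
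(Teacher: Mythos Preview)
Your strategy is exactly the paper's: quote Zhang~\cite{Zh09} for the fact that the generating series is a formal Fourier Jacobi expansion of weight $1+\tfrac{n}{2}$ and type the Weil representation, and then invoke Theorem~\ref{thm:main-theorem-fourier-jacobi} (in its precise form, Theorem~\ref{thm:formal-fourier-jacobi-series-equal-siegel-modular-forms}) to conclude.  There is, however, one genuine slip.  You call $\CH^2(X_\Gamma)_\CC \otimes \CC[\disc^2\,\cL]$ ``finite-dimensional'' and then say Theorem~\ref{thm:main-theorem-fourier-jacobi} ``applies verbatim''; but $\CH^2(X_\Gamma)_\CC$ is not known to be finite-dimensional a priori (indeed, finite-dimensionality of the span of the special cycles is a \emph{consequence} of the theorem, not an input), and the whole machinery of the paper---Definition~\ref{def:siegel-modular-forms}, the dimension estimates in Section~\ref{ssec:dimension-formulas}, and the proof of Theorem~\ref{thm:formal-fourier-jacobi-series-equal-siegel-modular-forms}---is set up only for finite-dimensional representation spaces~$V_\rho$.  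The paper fixes this by first composing with an arbitrary linear functional $f\colon \CH^2(X_\Gamma)_\CC \to \CC$: Zhang's argument already gives that each $\Theta_{\Gamma,f}$ is a formal Fourier Jacobi expansion valued in the genuinely finite-dimensional $\CC[\disc^2\,\cL]$, so Theorem~\ref{thm:formal-fourier-jacobi-series-equal-siegel-modular-forms} applies and yields $\Theta_{\Gamma,f}\in \rmM^{(2)}_{1+n/2}(\rho_{\cL,2})$ for every~$f$.  Since $\dim \rmM^{(2)}_{1+n/2}(\rho_{\cL,2})<\infty$, the span of the Fourier coefficients in $\CH^2(X_\Gamma)_\CC$ is then forced to be finite-dimensional, and modularity of the full $\CH^2$-valued series follows.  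Apart from this reduction step, your outline coincides with the paper; in particular, the verification of your steps (i) and (ii) is not reproved but simply cited from Zhang (his Proposition~2.6), so the concern you flag about (i) is already absorbed into that reference.
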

\noindent As explained in Section~\ref{ssec:introduction:siegel-modular-forms}, there are two notions of vector valued Siegel modular forms.  In Theorem~\ref{thm:main-theorem-kudla} we refer to the one that comes from representations of $\Sp{2}(\ZZ)$ (or its double cover).  Note also that $X_\Gamma$ is not compact.  Its Chow ring can be viewed as the quotient $\CH^\bullet(Y_\Gamma)_\CC \slashdiv \CH^\bullet (\partial Y_\Gamma)$, where $Y_\Gamma$ denotes the toroidal compactification of~$X_\Gamma$ and~$\partial Y_\Gamma$ are the boundary components.

As an outcome of Theorem~\ref{thm:main-theorem-kudla}, we are able to prove the following statement, which was shown by Zhang using adhoc methods~\cite{Zh09}.
\begin{corollary}
The span of special cycles $\{Z(T, \mu)\}$ in $\CH^2(X_\Gamma)_\CC$ is finite dimensional.
\end{corollary}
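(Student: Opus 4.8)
The plan is to deduce finite-dimensionality of the span of special cycles directly from the modularity statement in Theorem~\ref{thm:main-theorem-kudla}. First I would observe that the generating series
\begin{gather*}
  \Psi(Z)
=
  \sum_{\substack{ \mu \in \disc^2\, \cL \\ 0 \le T \in \MatT{2}(\QQ)}}
  \{Z(T, \mu)\} \cdot \{\omega^\vee \}^{2 - \rk(T)}\,
  \exp\big(2 \pi i\, \tr( T Z ) \big)\;
  \frake_\mu
\end{gather*}
is, by that theorem, a vector valued Siegel modular form of degree~$2$ and weight~$1 + \tfrac{n}{2}$ with values in the finite dimensional $\CC$-vector space $\CH^2(X_\Gamma)_\CC \otimes \CC[\disc^2\,\cL]$; more precisely, fixing a $\CC$-basis of $\CH^2(X_\Gamma)_\CC$ and of $\CC[\disc^2\,\cL]$, the components of $\Psi$ are scalar (vector valued for~$\rho$) Siegel modular forms of this fixed weight and degree. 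The key point is that the space of Siegel modular forms of a fixed weight, degree, and type is finite dimensional, so the $\CC$-span of all Fourier--Jacobi (equivalently, Fourier) coefficients of $\Psi$ lies in a finite dimensional subspace of $\CH^2(X_\Gamma)_\CC \otimes \CC[\disc^2\,\cL]$.

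Next I would extract from this the statement about the cycles themselves. The coefficient of $\exp(2\pi i\,\tr(TZ))\,\frake_\mu$ in $\Psi$ is $\{Z(T,\mu)\}\cdot\{\omega^\vee\}^{2-\rk(T)}$. For $T$ of rank~$2$ this is exactly $\{Z(T,\mu)\}$, so all rank-$2$ special cycles already span a finite dimensional subspace of $\CH^2(X_\Gamma)_\CC$. For $T$ of rank~$\le 1$ one has $\{Z(T,\mu)\} \in \CH^{\rk(T)}(X_\Gamma)_\CC$ with $\rk(T)\in\{0,1\}$, and multiplication by $\{\omega^\vee\}^{2-\rk(T)}$ lands these classes in $\CH^2$; the modularity of $\Psi$ shows these products also lie in the same finite dimensional space. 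Since $\{\omega^\vee\}$ is a fixed class, cup product with a fixed power of it is a linear map, so finiteness of the span of $\{Z(T,\mu)\}\cdot\{\omega^\vee\}^{2-\rk(T)}$ over all $T, \mu$ gives finiteness of the span of the images; as the classes $\{Z(T,\mu)\}$ for $\rk(T)=2$ already exhaust the codimension-$2$ special cycles up to these multiplications, the span of all $\{Z(T,\mu)\}$ in $\CH^2(X_\Gamma)_\CC$ — where by convention one means the classes of codimension exactly~$2$ — is finite dimensional. If instead one wishes to include the lower-rank cycles pushed into $\CH^2$ via $\{\omega^\vee\}$, the same argument applies verbatim.

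The only genuine input beyond Theorem~\ref{thm:main-theorem-kudla} is the finite dimensionality of spaces of vector valued Siegel modular forms of fixed weight and degree~$2$, which is classical (Siegel modular forms of bounded weight form a finite dimensional space, e.g.\ by the theory of Fourier expansions on Siegel domains or by compactness arguments on the Satake/Baily--Borel compactification). Thus I do not expect a serious obstacle; the proof is essentially a formal consequence of modularity, and the "hard part" — the modularity of $\Psi$ — has already been done. The one point requiring minor care is bookkeeping: making precise that "Fourier coefficients of a finite dimensional space of modular forms span a finite dimensional space" and matching the indexing of Fourier--Jacobi coefficients of $\Psi$ with the cycles $\{Z(T,\mu)\}$, including the role of the power of $\{\omega^\vee\}$ for degenerate $T$.
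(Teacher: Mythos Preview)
Your approach is correct and matches the paper's: deduce finite dimensionality from modularity (Theorem~\ref{thm:main-theorem-kudla}) together with $\dim \rmM^{(2)}_{1+n/2}(\rho) < \infty$. One phrasing caution: writing ``fixing a $\CC$-basis of $\CH^2(X_\Gamma)_\CC$'' is circular, since this Chow group is not known to be finite dimensional a priori; the paper (see Section~\ref{sec:special-cycles}) instead formulates modularity via linear functionals $f$ on $\CH^2(X_\Gamma)_\CC$, and then the map $f \mapsto \Theta_{\Gamma,f}$ into the finite-dimensional space of Siegel modular forms has kernel equal to the annihilator of the span of the Fourier coefficients, whence that span is finite dimensional. (Also, since $\{Z(T,\mu)\} \in \CH^{\rk(T)}(X_\Gamma)_\CC$, only positive definite $T$ contribute classes in $\CH^2$, so your discussion of rank $\le 1$ is unnecessary here.)
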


\subsection{Computing relations of special cycles}

Let $\rho_{\cL, 2}$ denote the type of the generating function in Theorem~\ref{thm:main-theorem-kudla}.  Its representation space is the group algebra $\CC\big[ \disc^2\, \cL \big]$.  Denote by $\Phi_\mu$ ($\mu \in \disc^2\, \cL$) the components of a vector valued Siegel modular form $\Phi$ of type $\rho_{\cL, 2}$, and write $c(\Phi_\mu; T)$ for its $T$th Fourier coefficient.
\begin{corollary}
\label{thm:main-corollary-relations}
Suppose that $b :\, \MatT{2}(\QQ) \times \disc^2\, \cL \rightarrow \CC$ is a function with finite support such that 
\begin{gather*}
  \sum_{T, \mu} b(T, \mu)\, c(\Phi_\mu; T)
=
  0
\end{gather*}
for every Siegel modular form of weight $1 + \frac{n}{2}$ and type $\rho_{\cL, 2}$.  Then we have
\begin{gather*}
  \sum_{T, \mu} b(T, \mu)\, \{ Z(T, \mu) \}
=
  0
\in
  \CH^2( X_\Gamma )_\CC
\text{.}
\end{gather*}
\end{corollary}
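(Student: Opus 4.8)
The plan is to read this off from Theorem~\ref{thm:main-theorem-kudla} by pure linear duality, with no further geometric input; this is the standard mechanism by which a modularity statement for a cycle-valued generating function produces relations among cycle classes. Write $W \subseteq \CH^2(X_\Gamma)_\CC$ for the span of all the classes $\{Z(T,\mu)\}\cdot\{\omega^\vee\}^{2-\rk(T)}$; by the preceding corollary $W$ is finite dimensional. Let $M$ denote the space of Siegel modular forms of weight $1 + \tfrac n2$ and type $\rho_{\cL,2}$. The first step is to reinterpret the generating function of Theorem~\ref{thm:main-theorem-kudla} as a single element $\cZ$ of the finite dimensional tensor product $M \otimes W$: concretely, for each linear functional $\lambda$ on $W$, the series $(\id \otimes \lambda)\cZ$ is an honest $\CC[\disc^2\,\cL]$-valued Siegel modular form of weight $1 + \tfrac n2$ and type $\rho_{\cL,2}$, whose $(\mu, T)$th Fourier coefficient is $\lambda\big(\{Z(T,\mu)\}\cdot\{\omega^\vee\}^{2-\rk(T)}\big)$. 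This is exactly the content of Theorem~\ref{thm:main-theorem-kudla}, repackaged.

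Next I would introduce the linear functional $\beta$ on $M$ given by $\beta(\Phi) = \sum_{T,\mu} b(T,\mu)\, c(\Phi_\mu; T)$, which is well defined since $b$ has finite support; the hypothesis on $b$ is precisely that $\beta$ vanishes identically on $M$. Applying $\beta$ in the first tensor factor of $\cZ$ then gives, on the one hand, $(\beta\otimes\id)\cZ = 0$ in $W$, and on the other hand, by reading off Fourier coefficients,
\begin{gather*}
  (\beta\otimes\id)\cZ
=
  \sum_{T,\mu} b(T,\mu)\, \{Z(T,\mu)\}\cdot\{\omega^\vee\}^{2-\rk(T)}
\text{,}
\end{gather*}
which is the displayed class in the statement (with a cycle $\{Z(T,\mu)\}$ of codimension $\rk(T) < 2$ understood as its image in $\CH^2(X_\Gamma)_\CC$ after multiplication by $\{\omega^\vee\}^{2-\rk(T)}$). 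Comparing the two expressions yields the claimed relation. If one prefers to avoid the tensor-product language, the same argument runs one functional at a time: for every $\lambda \in W^\vee$ the scalar $\sum_{T,\mu} b(T,\mu)\,\lambda\big(\{Z(T,\mu)\}\cdot\{\omega^\vee\}^{2-\rk(T)}\big)$ equals $\beta\big((\id\otimes\lambda)\cZ\big) = 0$, and since $W$ is finite dimensional the class itself must vanish.

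I do not expect a genuine obstacle here: once Theorem~\ref{thm:main-theorem-kudla} is available the corollary is elementary. The only points deserving a line of justification are the interchange of the infinite sum over $(T,\mu)$ in $\cZ$ with the finite pairing against $b$ — legitimate by absolute convergence of Fourier expansions of Siegel modular forms together with the finite dimensionality of $W$ — and the passage between the $W$-valued modular form $\cZ$ and its scalar reductions $(\id\otimes\lambda)\cZ$, again immediate because $W$ is finite dimensional. Both are routine.
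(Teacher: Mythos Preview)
Your argument is correct and is exactly the intended one: the paper does not spell out a proof of this corollary, treating it as an immediate consequence of Theorem~\ref{thm:main-theorem-kudla}, and the linear-duality extraction you describe is precisely how one reads off such relations from a modularity statement. Your remark about interpreting $\{Z(T,\mu)\}$ as $\{Z(T,\mu)\}\cdot\{\omega^\vee\}^{2-\rk(T)}$ when $\rk(T)<2$ is also the right reading of the statement.
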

Explicit knowledge about Fourier coefficients of Siegel modular forms thus yields results about relations in $\CH^2( X_\Gamma )_\CC$.  This observation motivates our considerations in Section~\ref{sec:computing-via-fj-expansions}.  However, we warn the reader that relations computed in this way do not necessarily exhaust all relations that hold for the $\{ Z(T, \mu) \}$, even thought this is conceivable, if $\cL$ satisfies some mild hypotheses.  The computational investigation of relations for $r = 1$, was resolved by the author in~\cite{Ra12}, building on work of Bruinier.  According to~\cite{Br12}, if $r = 1$ and $\cL$ splits off a hyperbolic plane and a unimodular hyperbolic plane, all relations that hold for the $\{ Z(T, \mu) \}$ ($0 \le T \in \QQ$, $\mu \in \disc\, \cL$) can be described by the analog of Corollary~\ref{thm:main-corollary-relations}.

We prove correctness and termination of an algorithm to compute Fourier expansions of Siegel modular forms of degree~$2$.  It is very much inspired, and in a sense that we make clear in a moment, generalizes ideas conveyed in~\cite{Po10, IPY12}.
\begin{theorem}
There is an explicit, terminating algorithm that computes Fourier expansions of Siegel modular forms of arbitrary weight and type.
\end{theorem}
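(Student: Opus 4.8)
The plan is to compute, to any prescribed precision, a basis of the space $M_k(\sigma_l \otimes \rho)$ of Siegel modular forms of degree~$2$, weight~$k \in \frac12 \ZZ$ and type $\sigma_l \otimes \rho$ by linear algebra over spaces of Jacobi forms, using Theorem~\ref{thm:main-theorem-fourier-jacobi} to certify both termination and correctness. Let $D := \dim M_k(\sigma_l \otimes \rho)$; this integer is computable from a dimension formula. Fix moreover an effective integer $M_0$ that is large enough, in terms of the requested precision, that the Fourier Jacobi coefficients $\phi_0, \dots, \phi_{M_0}$ of a Siegel modular form determine all the Fourier coefficients $c(\Phi; T)$ in the requested range (using reduction theory of positive binary quadratic forms and the transformation of Fourier coefficients under $\GL{2}(\ZZ)$), and that is also large enough that a degree-$2$ Siegel modular form of weight~$k$ and type $\sigma_l \otimes \rho$ all of whose Fourier Jacobi coefficients $\phi_0, \dots, \phi_{M_0}$ vanish is identically zero; the latter holds once $M_0$ exceeds an explicit bound coming from standard growth estimates for Fourier coefficients of degree-$2$ Siegel modular forms.

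For each $M$ let $\cV^{(\le M)}$ denote the finite dimensional space of tuples $(\phi_m)_{0 \le m \le M}$ with $\phi_m \in \rmJ_{k, m}(\sigma_l \otimes \rho)$ satisfying the symmetry relations~(\ref{eq:involution-condition-introduction}) for every pair $m, n \le M$, that is, the space of formal Fourier Jacobi expansions truncated at level~$M$. Each $\cV^{(\le M)}$ is effectively computable together with an explicit basis written out in Fourier expansions: the spaces $\rmJ_{k, m}(\sigma_l \otimes \rho)$ of (doubly vector valued) Jacobi forms, and the Fourier expansions of their elements, can be obtained via the theta decomposition into vector valued elliptic modular forms of weight $k - \frac12$ (compare \cite{EZ85} and Section~\ref{sec:jacobi-forms}), and the conditions~(\ref{eq:involution-condition-introduction}) then become finitely many linear equations in finitely many of the Fourier coefficients. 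Truncation induces linear maps $\cV^{(\le M)} \to \cV^{(\le M_0)}$ for $M \ge M_0$; write $\cW_M \subseteq \cV^{(\le M_0)}$ for their images. Since truncating a tuple in $\cV^{(\le M+1)}$ to length~$M$ produces a tuple in $\cV^{(\le M)}$, the $\cW_M$ form a descending chain of subspaces of the fixed finite dimensional space $\cV^{(\le M_0)}$.

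The algorithm computes $\cV^{(\le M)}$ and $\cW_M$ for $M = M_0, M_0 + 1, \dots$ and halts at the first $M$ with $\dim \cW_M = D$, returning a basis of $\cW_M$, read off as Fourier Jacobi data of precision~$M_0$ and hence as Fourier coefficients $c(\Phi; T)$ in the requested range. For termination and correctness, observe that the inverse limit $\varprojlim_M \cV^{(\le M)}$ is by definition the space of (untruncated) formal Fourier Jacobi expansions, which by Theorem~\ref{thm:main-theorem-fourier-jacobi} equals $M_k(\sigma_l \otimes \rho)$; in particular any element of any $\cV^{(\le M)}$ admitting a compatible system of lifts to all levels is the truncation of a genuine Siegel modular form. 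Since all terms are finite dimensional, a Mittag--Leffler argument identifies the stable value of the descending chain $(\cW_M)_M$ with the image of $M_k(\sigma_l \otimes \rho)$ under truncation to level~$M_0$; by the choice of $M_0$ this truncation is injective, so the stable value has dimension exactly~$D$ and is contained in every $\cW_M$. Hence the non-increasing integer sequence $\dim \cW_M$ reaches~$D$ after finitely many steps (so the algorithm terminates), and once $\dim \cW_M = D$ the subspace $\cW_M$ coincides with the image of $M_k(\sigma_l \otimes \rho)$; lifting a basis of $\cW_M$ back to $M_k(\sigma_l \otimes \rho)$ and extracting Fourier coefficients as above yields the desired output (so the algorithm is correct). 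Half-integral and odd weights are handled in the same way, since the relevant Jacobi spaces remain computable via the theta decomposition; alternatively, one reduces to even weight by tensoring with a fixed nonzero Siegel modular form, exactly as in the proof of Theorem~\ref{thm:main-theorem-fourier-jacobi}.

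The main obstacle is precisely the appeal to Theorem~\ref{thm:main-theorem-fourier-jacobi}. Without knowing that formal Fourier Jacobi expansions converge one has only the inclusion $M_k(\sigma_l \otimes \rho) \subseteq \varprojlim_M \cV^{(\le M)}$, and then the descending chain $(\cW_M)_M$ might stabilize at a dimension strictly larger than~$D$, or offer no a priori certificate that it has reached its limit, since a weakly decreasing integer sequence can remain constant before dropping again. It is the identification of formal Fourier Jacobi expansions with Siegel modular forms, which pins the stable dimension to the computable value~$D$, that upgrades the heuristic ``enlarge the truncation level until the dimension stabilizes'' to a provably terminating and correct procedure. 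In the scalar case of \cite{IPY12} this role was played by a delicate comparison of explicit dimension formulas for the two kinds of objects; replacing that comparison by Theorem~\ref{thm:main-theorem-fourier-jacobi} is what permits arbitrary weight and type here, and is the sense in which this algorithm generalizes those of \cite{Po10, IPY12}.
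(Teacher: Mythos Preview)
The proposal is correct and takes essentially the same approach as the paper: both compute spaces of truncated formal Fourier Jacobi expansions for increasing truncation levels, compare with the known dimension $D = \dim \rmM^{(2)}_{k,l}(\rho)$, and invoke Theorem~\ref{thm:main-theorem-fourier-jacobi} (equivalently Theorem~\ref{thm:formal-fourier-jacobi-series-equal-siegel-modular-forms}) to guarantee that the descending chain of dimensions actually reaches~$D$. Your Mittag--Leffler/inverse-limit phrasing and the projection to a fixed precision level~$M_0$ are packaging choices; the paper encodes the same content via Propositions~\ref{prop:formal-fourier-jacobi-determination-of-coefficients--truncated-version} and~\ref{prop:inclusion-of-spaces-of-truncated-formal-fourier-jacobi-expansions}, which give the explicit bound $B > \tfrac{k+l}{10}$ beyond which truncation is injective and the chain is monotone.
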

The proof of this finding is very similar to the proof of Theorem~\ref{thm:main-theorem-fourier-jacobi}.  We employ variants for truncated Fourier expansions of lemmas that entered the latter proof.

Combining the above and Corollary~\ref{thm:main-corollary-relations}, we obtain
\begin{corollary}
\label{cor:main-corollary-computing-relations}
There is an explicit, terminating algorithm that computes infinitely many relations of special cycles $\{ Z(T, \mu) \} \in \CH^2(X_\Gamma)_\CC$.
\end{corollary}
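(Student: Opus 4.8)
The plan is to make explicit the combination indicated just above: run the algorithm for Fourier expansions of Siegel modular forms of degree~$2$ (the preceding theorem) to obtain, to any prescribed precision, a \emph{complete} basis of the relevant space of forms, extract by finite linear algebra the linear relations among their Fourier coefficients, and feed each such relation into Corollary~\ref{thm:main-corollary-relations} to produce a relation of special cycles.

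In detail, I would first fix an exhausting chain of finite sets $\cS_1 \subseteq \cS_2 \subseteq \cdots$ whose union is $\{ (T, \mu) :\, 0 \le T \in \MatT{2}(\QQ),\ \mu \in \disc^2\, \cL \}$ and with $\# \cS_B \to \infty$ (for instance, take $T$ with trace and denominators bounded by~$B$; recall the denominators occurring in a form of fixed type $\rho_{\cL,2}$ are a priori bounded). Fix~$B$. Applied with weight~$1 + \frac{n}{2}$, type $\rho_{\cL, 2}$, and precision covering~$\cS_B$, the algorithm of the preceding theorem outputs a basis $\Phi^{(1)}, \dots, \Phi^{(d)}$ of that (finite dimensional) space of Siegel modular forms together with all Fourier coefficients $c\big( \Phi^{(j)}_\mu; T \big)$ for $(T, \mu) \in \cS_B$. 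Two features of that theorem enter. First, its output is a basis of the \emph{entire} space of Siegel modular forms of weight~$1 + \frac{n}{2}$ and type $\rho_{\cL, 2}$ — equivalently, the algorithm certifies that no further such forms exist — rather than a basis of a mere truncation; this completeness is exactly the input Corollary~\ref{thm:main-corollary-relations} requires. Second, after normalization the Fourier coefficients lie in an explicit number field (the type $\rho_{\cL, 2}$ is, up to a character, a Weil representation and hence defined over a cyclotomic field), so the ensuing linear algebra is exact.

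Next I would solve, by Gaussian elimination over that number field, the finite homogeneous linear system in the unknowns $\big( b(T, \mu) \big)_{(T, \mu) \in \cS_B}$,
\begin{gather*}
  \sum_{(T, \mu) \in \cS_B} b(T, \mu)\, c\big( \Phi^{(j)}_\mu; T \big) = 0
  \qquad (1 \le j \le d)
\text{,}
\end{gather*}
and output a basis $b^{(1)}_B, \dots, b^{(N_B)}_B$ of its solution space, each extended by~$0$ to a finitely supported function on $\MatT{2}(\QQ) \times \disc^2\, \cL$. Since the $\Phi^{(j)}$ span all Siegel modular forms of weight~$1 + \frac{n}{2}$ and type $\rho_{\cL, 2}$ and each $b^{(i)}_B$ is supported in~$\cS_B$, the hypothesis of Corollary~\ref{thm:main-corollary-relations} is met by every $b^{(i)}_B$, which therefore yields the relation $\sum_{T, \mu} b^{(i)}_B(T, \mu)\, \{ Z(T, \mu) \} = 0$ in $\CH^2(X_\Gamma)_\CC$ (with the convention of Theorem~\ref{thm:main-theorem-kudla} that $\{ Z(T, \mu) \}$ carries the factor $\{ \omega^\vee \}^{2 - \rk(T)}$ whenever $\rk(T) < 2$). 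Letting $B$ run over $1, 2, 3, \dots$ — each round being a finite, explicit, terminating computation — then produces an infinite stream of such relations; infinitely many of them are linearly independent because $N_B \ge \# \cS_B - d \to \infty$ and the solution spaces nest, so the $b^{(i)}_B$ span subspaces of unbounded dimension inside the kernel of the map from finitely supported functions to $\CH^2(X_\Gamma)_\CC$.

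The only delicate point is the first feature highlighted above: one cannot replace the full space of forms by a truncation, since Corollary~\ref{thm:main-corollary-relations} demands a coefficient relation valid for \emph{every} Siegel modular form of weight~$1 + \frac{n}{2}$ and type $\rho_{\cL, 2}$. Hence the argument rests on the completeness guarantee built into the preceding theorem; granting that, what remains is routine finite linear algebra.
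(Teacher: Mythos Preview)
Your proposal is correct and follows precisely the route the paper indicates: the paper offers no detailed proof for this corollary beyond the sentence ``Combining the above and Corollary~\ref{thm:main-corollary-relations}, we obtain'', and you have spelled out that combination---run Algorithm~\ref{alg:compute-via-fj-expansion:compute-jacobi} to obtain a full basis of $\rmM^{(2)}_{1+n/2}(\rho_{\cL,2})$ together with its Fourier coefficients, extract coefficient relations by linear algebra, and apply Corollary~\ref{thm:main-corollary-relations}. Your observation that completeness of the computed basis (rather than a truncated space) is what makes Corollary~\ref{thm:main-corollary-relations} applicable is exactly the point, and your count $N_B \ge \#\cS_B - d \to \infty$ correctly justifies the word ``infinitely''.
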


In~\cite{Po10, IPY12}, Ibukiyama, Poor, and Yuen -- following a suggestion by Armand Brumer -- hinted an algorithm to compute paramodular forms, which is very similar to ours.  But they were not able to prove that theirs terminates, except in four cases.  Since paramodular forms are Siegel modular forms for congruence subgroups of Hecke type, our algorithm also applies to their problem after inducing the trivial representation of this subgroup to $\Sp{2}(\ZZ)$.  In this sense, we resolve the question raised at the end of the introduction of~\cite{IPY12}, asking whether ``the growth condition is superfluous''.  In practice, however, our algorithm will most certainly be slower than Poor and Yuen's, and proving that it terminates without employing induction of representations would be useful.

\subsection{Final remarks}

In Theorem~\ref{thm:main-theorem-fourier-jacobi}, we have also treated the case of vector valued weight $l \ne 0$, which we have not yet made use of.  We have included it into our considerations, because of work by Bergstr\"om, Farber, and van der Geer~\cite{FvG04, FvG04b, BFvG08}.  They described a method that computes quiet efficiently Hecke eigenvalues of Siegel modular forms for all $k$ and $l$ if $\rho$ is trivial.  These computation rely on extensive precomputations, that would have to be redone if one wanted to study Siegel modular forms for congruence subgroups other than those that they examined.  Our method then provides an alternative path to follow, which even yields further information.
\vspace*{1ex}

The paper is organized as follows.  In Section~\ref{sec:notation}, we briefly fix notation.  Section~\ref{sec:siegel-modular-forms} and Section~\ref{sec:jacobi-forms} contain general discussions of Siegel modular forms and Jacobi forms.  The proof of Theorem~\ref{thm:main-theorem-fourier-jacobi} is contained in Section~\ref{sec:formal-fj-expansions}.  Our main application is then discussed in Section~\ref{sec:special-cycles}.  We consider computations of Fourier expansion in the final Section~\ref{sec:computing-via-fj-expansions}.

{\it Acknowledgment: To be entered after the referee's report is received.}

\section{Notation}
\label{sec:notation}

We write $I_n$ for the $n \times n$ identity matrix.  The zero matrix of size $n \times m$ is denoted by $0^{(n, m)}$.  If $m = 1$, we suppress the corresponding superscript.  The module of $n \times n$ matrices with entries in a ring~$R$ is denoted by $\Mat{n}(R)$.  We write $\cdot\,{}^\T$ for transposition of matrices.  The space of symmetric matrices of size~$n$ is denoted by $\MatT{n}(R)$.

The group of affine transformations on $\CC^n$ is denoted by $\Aff{n}(\CC) \cong \GL{n}(\CC) \ltimes \CC^n$.  It can be embedded into $\GL{n + 1}(\CC)$ via
\begin{gather}
\label{eq:inclusion-aff-gl}
  (\gamma, \mu)
\mapsto
  \begin{pmatrix}
   \gamma & \mu \\
   0^{(1, n)} & 1
  \end{pmatrix}
\text{.}
\end{gather}
Restrictions of representations of $\GL{2}(\CC)$ to $\Aff{1}(\CC)$ that appear in this paper are taken with respect to this embedding.  When referring to representations, we always mean finite dimensional representations.  The scalar representation of $\GL{n}(\CC)$ that lets $g$ act as $\det(g)^k$ is denoted by~$\det{}^k$.  The $l$th symmetric power of the canonical representation of $\GL{n}(\CC)$ ($n \ge 2$) on $\CC^n$ is denoted by $\sigma_l$.

Both $\GL{n}(\CC)$ and $\Aff{n}(\CC)$ have connected double covers $\tGL{n}(\CC)$ and $\tAff{n}(\CC)$.  We think of elements in $\GL{n}(\CC)$ as pairs $(A, \det(A)^{\frac{1}{2}})$, where the second component is one choice of root.  The one dimensional representation of $\tGL{n}(\CC)$ which lets $(A, \det(A)^{\frac{1}{2}})$ act as $\det^{\frac{1}{2}}(A)$ is denoted by $\det{}^{\frac{1}{2}}$.  We have $\tAff{n}(\CC) \cong \tGL{n}(\CC) \ltimes \CC^n$, and we will use this isomorphism without further mentioning it.  The above embedding of $\Aff{n}(\CC)$ into $\GL{n + 1}(\CC)$ extends to an embedding of $\tAff{n}(\CC)$ into $\tGL{n + 1}(\CC)$.

Representations of discrete groups are assumed to have finite index kernels.  This applied in particular to representations of $\tGL{n}(\ZZ)$ and $\tSp{g}(\ZZ)$.

\section{Siegel modular forms}
\label{sec:siegel-modular-forms}

Let $J_g = \left(\begin{smallmatrix} 0^{(g,g)} & -I_g \\ I_g & 0^{(g,g)} \end{smallmatrix}\right)$.
The full Siegel modular group of degree~$g$ is
\begin{gather*}
  \Sp{g}(\ZZ)
=
  \big\{ \gamma \in \Mat{2g}(\ZZ) \,:\, \gamma^\T J_g \gamma = J_g \big\}
\end{gather*}
with typical elements $\gamma = \left(\begin{smallmatrix} A & B \\ C & D \end{smallmatrix}\right)$ for $A, B, C, D \in \Mat{g}(\ZZ)$.  One kind of elements that we use frequently is
\begin{gather*}
  \rot(U)
=
  \left(\begin{smallmatrix}
    U & \\
    & (U^{-1})^\T
  \end{smallmatrix}\right)
\end{gather*}
for $U \in \GL{g}(\ZZ)$.  The connected double cover $\tSp{g}(\ZZ)$ of $\Sp{g}(\ZZ)$ is the non trivial central extension of $\Sp{g}(\CC)$ by the cyclic group $C_2$.

The full Siegel modular group acts on the Siegel upper half space of degree~$g$
\begin{gather*}
  \HS^{(g)}
=
  \big\{ Z = X + i Y \in \Mat{g}(\CC) \,:\, Y > 0 \big\}
\end{gather*}
via
\begin{gather*}
  \gamma Z
=
  (A Z + B) (C Z + D)^{-1}
\text{.}
\end{gather*}
We obtain an action of $\tSp{g}(\ZZ)$ on $\HS^{(g)}$ by composing with the projection $\tSp{g}(\ZZ) \twoheadrightarrow \Sp{g}(\ZZ)$.  In this paper, we will focus on the case $g \le 2$, and so we fix special notation in both cases.  We usually write $\tau$ for elements of $\HS^{(1)}$, and $\left(\begin{smallmatrix} \tau & z \\ z & \tau' \end{smallmatrix}\right)$ for elements of $\HS^{(2)}$.  Further, we set $q = \exp( 2 \pi i\, \tau)$, $\zeta = \exp( 2 \pi i\, z)$, and $q' = \exp(2 \pi i\, \tau')$.

We can use the upper half space to find a concrete description of elements in $\tSp{g}(\ZZ)$.  Elements of $\tSp{g}(\ZZ)$ can and will be thought of as pairs
\begin{gather*}
  \big( \gamma,\, \big((C Z + D), \det(C Z + D)^\frac{1}{2} \big) \big)
\text{.}
\end{gather*}
The first component is an element of $\Sp{g}(\ZZ)$, and the second component is a holomorphic map $\HS^{(g)} \rightarrow \tGL{g}(\CC)$, that we typically denote by $\omega$.

Let $\sigma$ be a representation of $\tGL{g}(\CC)$, the double cover of $\GL{g}(\CC)$, with representation space $V_\sigma$.  Let $\rho$ be a representation of $\tSp{g}(\ZZ)$ with representation space $V_\rho$.  The Siegel slash action  of weight~$\sigma$ and type~$\rho$ is defined as
\begin{gather*}
  \big( \Phi \big|_{\sigma, \rho}\, (\gamma, \omega) \big) (Z)
=
  \sigma(\omega(Z))^{-1}  \rho(\gamma)^{-1} \, \Phi(\gamma Z)
\end{gather*}
for all $\Phi :\, \HS^{(g)} \rightarrow V_{\sigma} \otimes V_\rho$ and all $(\gamma, \omega) \in \tSp{g}(\ZZ)$.

\begin{definition}
\label{def:siegel-modular-forms}
Let $\sigma$ be a representation of $\tGL{g}(\CC)$ with representation space $V_\sigma$, and let $\rho$ be a representation of $\tSp{g}(\ZZ)$ with representation space $V_\rho$.  A function $\Phi :\, \HS^{(g)} \rightarrow V_{\sigma} \otimes V_\rho$ is called a Siegel modular form of weight $\sigma$ and type $\rho$ if $\Phi \big|_{\sigma, \rho}\, (\gamma, \omega) = \Phi$ for all $(\gamma, \omega) \in \tSp{g}(\ZZ)$ and, in addition, if $g = 1$, $\Phi$ has Fourier expansion
\begin{gather*}
  \Phi(\tau)
=
  \sum_{0 \le n \in \QQ} c(\Phi; n) \, q^n
\text{,}\quad
  c(\Phi; n) \in V_\sigma \otimes V_\rho
\text{.}
\end{gather*}

If $\Phi$ is meromorphic and satisfies $\Phi \big|_{\sigma, \rho}\, (\gamma, \omega) = \Phi$ for all $(\gamma, \omega) \in \tSp{g}(\ZZ)$, we call it a meromorphic Siegel modular form.
\end{definition}
\noindent We write $\rmM^{(g)}(\sigma, \rho)$ for the space of Siegel modular forms of weight~$\sigma$ and type~$\rho$.

Irreducible representations of $\tGL{g}(\CC)$ either factor through $\GL{g}(\CC)$, or they are of the from $\det{}^{\frac{1}{2}} \otimes \sigma'$, where $\sigma'$ factors through $\GL{g}(\CC)$.  The meaning of $\det{}^k$ with $k \in \frac{1}{2}\ZZ$ thus becomes clear.  The space of Siegel modular forms of degree~$1$ (i.e., elliptic modular forms) of weight~$\det^k$, $k \in \frac{1}{2} \ZZ$ and type $\rho$ is denoted by $\rmM^{(1)}_k(\rho)$.  The space degree~$2$ Siegel modular forms of weight~$\det^k \otimes \sigma_l$, $k \in \frac{1}{2} \ZZ$, $l \in \ZZ$ and type~$\rho$ is denoted by  $\rmM^{(2)}_{k, l}(\rho)$.  If $l = 0$, we suppress it, and if $\rho$ is trivial, we abbreviate $\rmM^{(1)}_k(\rho) = \rmM^{(1)}_k$ and $\rmM^{(2)}_{k, l}(\rho) = \rmM^{(2)}_{k, l}$.

\begin{remark}
Suppose that $\sigma$ and $\rho$ are irreducible.  If $\sigma$ factors through $\GL{g}(\CC)$, one can show that for $\rho$'s which do not factor through $\Sp{2}(\ZZ)$ we have $\rmM^{(g)}(\sigma, \rho) = \{0\}$.  On the other hand, we have $\rmM^{(g)}(\sigma, \rho) = \{0\}$, if $\rho$ factors through $\Sp{g}(\ZZ)$ and $\sigma$ does not.

More concretely, this means that $\rmM^{(2)}_{k, l}(\rho) = \{ 0 \}$, if
\begin{enumerate}[(i)]
\item $k \in \ZZ$ and $\rho$ does not factor through $\Sp{2}(\ZZ)$, or
\item $k \in \frac{1}{2} \ZZ \setminus \ZZ$ and $\rho$ factors through $\Sp{2}(\ZZ)$.
\end{enumerate}
\end{remark}

\subsection{Structure theorems and dimension formulas }
\label{ssec:dimension-formulas}

Let $\rho$ be any (finite dimensional) representation of $\Sp{1}(\ZZ)$.  By results of Mason and Marks, $\bigoplus_{k \in \ZZ} \rmM^{(1)}_{k}(\rho)$ is a free module over $\bigoplus_{k \in \ZZ} \rmM^{(1)}_{k}$ of rank at most $\dim \rho$~\cite{MM10}.  Hence there is a polynomial $p_\rmg(\rho; t)$ with $p_\rmg(\rho; 1) \le \dim \rho$ such that
\begin{gather}
  \sum_{k \in \ZZ} \dim \rmM^{(1)}_{k}(\rho) t^k
=
  \frac{p_\rmg(\rho; t)}{(1 - t^4) (1 - t^6)}
\text{.}
\end{gather}

Fix a representation $\sigma$ of $\GL{2}(\CC)$ and a representation $\rho$ of $\Sp{2}(\ZZ)$ such that $\sigma(-I_2) \otimes \rho (-I_4)$ is trivial.  Then $\HS^{(2)} \times (V_\sigma \otimes V_\rho)$ descends to a vector bundle on $\Sp{2}(\ZZ) \backslash \HS^{(2)}$.  We can compute the asymptotic dimension of $\rmM^{(2)}_{\det^k \otimes\, \sigma}(\rho)$ using Hirzebruch-Riemann-Roch and the Lefschetz fixed point theorem.  We find that
\begin{gather}
\label{eq:asymptotic-dimensions-for-siegel-modular-forms}
  \rmM^{(2)}_{\det^k \otimes\, \sigma}(\rho)
\asymp
  \dim \sigma \otimes \rho \cdot \dim \rmM^{(2)}_k
\end{gather}
as $k \rightarrow \infty$, $k \in \ZZ$.  We give some details for the convenience of the reader.  Fix some normal congruence subgroup $\Gamma \subset \Sp{2}(\ZZ)$ such that there is a smooth compactification $\widetilde{X}_\Gamma$ of $X_\Gamma = \Gamma \backslash \HS^{(2)}$.  Write $X_{\Sp{2}(\ZZ)}$ for $\Sp{2}(\ZZ) \backslash \HS^{(2)}$, and $\widetilde{X}_{\Sp{2}(\ZZ)}$ for $(\Sp{2}(\ZZ) \slashdiv \Gamma) \backslash \widetilde{X}_\Gamma$.  Fix a line bundle~$E$ on $\widetilde{X}_{\Sp{2}(\ZZ)}$ of dimension~$d$.  The line bundle $\det$ is ample, so that $\rmH^i(\widetilde{X}_\Gamma, E \otimes \det^k) = 0$ for $i \ne 0$ and sufficiently large~$k$.  For such~$k$, Hirzebruch-Riemann-Roch gives
\begin{gather*}
  \dim \rmH^0(\widetilde{X}_\Gamma, E \otimes \det{}^k)
=
  \int_{\widetilde{X}_\Gamma} \mathop{\rm ch}(E) \mathop{\rm ch}(\det{}^k) \mathop{\rm td}(\widetilde{X}_\Gamma)
\text{,}
\end{gather*}
where ${\rm ch}(E)$ and ${\rm ch}(\det{}^k)$ are the Chern characters of $E$ and $\det{}^k$, respectively, and ${\rm td}(\widetilde{X}_\Gamma)$ is the Todd polynomial of $\widetilde{X}_\Gamma$.  The $i$th degree of ${\rm ch}(\det{}^k) = {\rm ch}(\det)^k$ has asymptotic $c_i k^i$ for some constant $c_i$.  Since $\det$ is positive, we have $c_i > 0$.  On the other hand the lowest degree of ${\rm ch}(E)$ equals $d$, and the lowest degree of ${\rm ch}(X_\Gamma)$ is $1$.  Hence we have
\begin{gather*}
  \dim \rmH^0(\widetilde{X}_\Gamma, E \otimes \det{}^k)
\asymp
  d \int_{X_\Gamma} c_3
\text{.}
\end{gather*}
In order to pass to $X_{\Sp{2}(\ZZ)}$, we use the Lefschetz fixed point theorem and the Koecher principle.  We have
\begin{align*}
&
  \dim \rmH^0 \big( X_{\Sp{2}(\ZZ)}, E \otimes \det{}^k \big)
=
  \dim \rmH^0 \big( \widetilde{X}_{\Sp{2}(\ZZ)}, E \otimes \det{}^k \big)
\\[4pt]
=&
  \frac{1}{\# (\Sp{2}(\ZZ) \slashdiv \Gamma)}
  \sum_{g \in \Sp{2}(\ZZ) \slashdiv \Gamma}
  \tr\big( g |_{ \rmH^0(\widetilde{X}_\Gamma, E \otimes \det{}^k) } \big)
\text{.}
\end{align*}
Since $\tr\big( g |_{ \rmH^0(\widetilde{X}_\Gamma, E \otimes \det{}^k)} \big)$ is bounded in~$k$ by by, roughly, the number of fixed points of~$g$, if $g \ne 1$, the stated asymptotic of dimensions follows when applying our considerations to $E = \HS^{(2)} \times (V_\rho \otimes V_{\sigma})$.

\section{Jacobi forms}
\label{sec:jacobi-forms}

Let
\begin{gather*}
  \HS^\rmJ
=
  \HS^{(1)} \times \CC
\end{gather*}
be the Jacobi upper half space.  The extended Jacobi group
\begin{gather*}
  \Gamma^\rmJ
=
  \SL{2}(\ZZ) \ltimes \ZZ^2 \mathop{\widetilde \times} \ZZ
\end{gather*}
has group law
\begin{gather*}
  \big(\gamma, (\lambda, \mu), \kappa \big)
  \cdot
  \big(\gamma', (\lambda', \mu'), \kappa' \big)
=
  \big( \gamma \gamma',\,
        (\lambda, \mu) \gamma' + (\lambda', \mu'),\,
        \kappa + \kappa'  + \lambda \mu' - \mu \lambda'\big)
\text{.}
\end{gather*}
It can be viewed as a subgroup of $\Sp{2}(\ZZ)$ via the embedding
\begin{gather*}
  \Big( \left(\begin{smallmatrix} a & b \\ c & d \end{smallmatrix}\right), (\lambda, \mu), \kappa \Big)
\mapsto
  \begin{pmatrix}
  a & 0 & b   & 0 \\
  0 & 1 & 0 & 0 \\
  c & 0 & d   & 0 \\
  0 & 0 & 0   & 1
  \end{pmatrix}
  \begin{pmatrix}
  1 & 0 & 0   & \mu \\
  \lambda & 1 & \mu & \kappa \\
  0 & 0 & 1   & -\lambda \\
  0 & 0 & 0   & 1
  \end{pmatrix}
\text{.}
\end{gather*}
We denote the double cover of $\Gamma^\rmJ$ by $\tGamma^\rmJ$.  Its elements are pairs $(\gamma^\rmJ, \omega)$ with $\gamma^\rmJ \in \Gamma^\rmJ$ and $\omega : \HS^{(1)} \rightarrow \tGL{1}(\CC)$ a holomorphic square root of $c \tau + d$.

The action of $\Gamma^\rmJ$ (and thus $\tGamma^\rmJ$) on $\HS^\rmJ$ is given by
\begin{gather*}
  \gamma^\rmJ (\tau, z)
=
  \big(\gamma, (\lambda, \mu), \kappa \big) (\tau, z)
=
  \big( \gamma \tau, \frac{z + \lambda \tau + \mu}{c \tau + d} \big)
\text{.}
\end{gather*}

For $m \in \QQ$ and $x \in \CC$, we set $e^m(x) = \exp(2 \pi i m\, x)$.  Given $k \in \frac{1}{2}\ZZ$, $m \in \QQ$ and a representation $\rho$ of $\tAff{1}(\CC) \times \tGamma^\rmJ$ with representation space~$V_\rho$, there is a slash action
\begin{multline*}
  \big( \phi \big|^\rmJ_{k, m, \rho} \, (\gamma^\rmJ, \omega) \big) (\tau, z)
\\[2pt]
=
  \omega(c \tau + d)^{-2k}\,
  e^m\big( \frac{-c (z + \lambda \tau + \mu)^2}{c \tau + d} + 2 \lambda z + \lambda^2 \tau + \kappa \big) \,
\\
  \rho\big( (\omega(\tau), c(z + \mu) - d \lambda), (\gamma^\rmJ, \omega) \big)^{-1}\,
  \phi( \gamma^\rmJ (\tau, z) )
\end{multline*}
on functions $\phi :\, \HS^\rmJ \rightarrow V_\rho$.  We will often use representations of $\tGamma^\rmJ$ and $\tAff{1}(\CC)$, which we then consider as representations of $\tAff{1}(\CC) \times \tGamma^\rmJ$ by tensoring with the trivial representation of $\tGamma^\rmJ$ and $\tAff{1}(\CC)$, respectively.  Note that $\big|_{k, m, \det^{k'} \otimes \rho} = \big|_{k + k', m, \rho}$ for any $k' \in \frac{1}{2} \ZZ$.


\begin{definition}
\label{def:jacobi-forms}
Let $\rho$ be a representation of $\tAff{1}(\CC) \times \tGamma^\rmJ$ with representation space~$V_\rho$.  A holomorphic function $\phi :\, \HS^\rmJ \rightarrow V_\rho$ is called a Jacobi form of weight~$k$, index~$m$, and type~$\rho$ if
\begin{enumerate}
\item \label{def:jacobi-forms:it:invariance}
  $\phi \big|^\rmJ_{k, m, \rho} \, (\gamma^\rmJ, \omega) = \phi$ for all $(\gamma^\rmJ, \omega) \in \Gamma^\rmJ$.
\item \label{def:jacobi-forms:it:growths}
  $\phi(\tau, \alpha \tau + \beta)$ is bounded (with respect to any norm on $V_\rho$) as $y \rightarrow \infty$ for all $\alpha, \beta \in \QQ$.
\end{enumerate}
\end{definition}
\noindent We denote the space of Jacobi forms of weight~$k$, index~$m$, and type~$\rho$ by $\rmJ_{k, m}(\rho)$.  If $\rho$ is trivial, we suppress it.  Note that $J_{k, m} = \{0\}$, if $m \in \QQ \setminus \ZZ$.  Except for Proposition~\ref{prop:fourier-jacobi-expansion-for-siegel-modular-forms}, in this section, we restrict our attention to $k \in \ZZ$.

The last component $\ZZ$ of $\Gamma^\rmJ$ is central.  Hence for irreducible representations $\rho$ it acts by scalars.  We have the following connection between $m$ and $\rho$.
\begin{proposition}
\label{prop:vanishing-of-jacobi-forms-by-rho}
Let $k \in \ZZ$, $m \in \QQ$.  If $\rho :\, \Aff{1}(\CC) \otimes \Gamma^\rmJ \rightarrow \GL{}(V_\rho)$ is irreducible and $\rho\big( I_2, (0,0), 1 \big) \ne \exp(2 \pi i\, m)$, then $\rmJ_{k, m}(\rho) = \{ 0 \}$.
\end{proposition}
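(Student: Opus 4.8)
The plan is a one-step application of Schur's lemma to the central element of the Jacobi group. Write $z_0 = (I_2, (0,0), 1) \in \Gamma^\rmJ$ for the generator of the last $\ZZ$ factor. First I would check that $z_0$ is central in $\Gamma^\rmJ$: feeding it into the displayed group law, both $z_0 \cdot \big(\gamma', (\lambda', \mu'), \kappa'\big)$ and $\big(\gamma', (\lambda', \mu'), \kappa'\big) \cdot z_0$ come out equal to $\big(\gamma', (\lambda', \mu'), \kappa' + 1\big)$. Hence $(\mathrm{id}, z_0)$ is central in $\Aff_1(\CC) \times \Gamma^\rmJ$, and since $\rho$ is a finite dimensional irreducible (complex) representation, Schur's lemma forces $\rho(z_0) = \lambda\, \id_{V_\rho}$ for some $\lambda \in \CC^\times$; by hypothesis $\lambda \ne \exp(2\pi i\, m)$.

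Next I would spell out the invariance condition~\ref{def:jacobi-forms:it:invariance} of Definition~\ref{def:jacobi-forms} for a form $\phi \in \rmJ_{k, m}(\rho)$ at the pair $(z_0, \omega_0)$, where $\omega_0 \equiv 1$ is the constant square root of $c\tau + d = 1$. Substituting $a = d = 1$, $b = c = 0$, $\lambda = \mu = 0$, $\kappa = 1$ into the formula defining $\big|^\rmJ_{k, m, \rho}$, every factor of the automorphy cocycle degenerates: the power of $c\tau + d$ is $1$, the exponential collapses to $e^m(\kappa) = \exp(2\pi i\, m)$, the $\Aff_1(\CC)$ argument of $\rho$ becomes the identity, and $z_0$ acts trivially on $\HS^\rmJ$. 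Thus $\phi \big|^\rmJ_{k, m, \rho}\, (z_0, \omega_0) = \phi$ amounts to $\phi(\tau, z) = \exp(2\pi i\, m)\, \rho(z_0)^{-1}\, \phi(\tau, z)$, i.e. $\lambda\, \phi(\tau, z) = \exp(2\pi i\, m)\, \phi(\tau, z)$ for every $(\tau, z) \in \HS^\rmJ$.

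Finally, because $\lambda \ne \exp(2\pi i\, m)$, this identity gives $\phi \equiv 0$, so $\rmJ_{k, m}(\rho) = \{0\}$ as claimed. I do not anticipate any real obstacle here; the only points calling for a little care are the (one-line) verification that $z_0$ is central and the bookkeeping that shows the slash action at $(z_0, \omega_0)$ collapses to multiplication by $\exp(2\pi i\, m)\, \rho(z_0)^{-1}$ once the correct constant square root is chosen.
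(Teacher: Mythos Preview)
Your argument is correct and is precisely the inspection of $\big|^\rmJ_{k, m, \rho}$ that the paper's one-line proof alludes to: apply the invariance at the central element $(I_2,(0,0),1)$, use Schur to get a scalar action, and compare with the factor $e^m(\kappa)=\exp(2\pi i m)$. There is nothing to add.
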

\begin{proof}
This follows when inspecting the definition of $\big|^\rmJ_{k, m, \rho}$.
\end{proof}

Jacobi forms have a Fourier expansion
\begin{gather*}
  \phi(\tau, z)
=
  \sum_{\substack{ 0 \le n \in \QQ \\ r \in \QQ }}
  c(\phi; n, r)\, q^n \zeta^r
\text{.}
\end{gather*}
By Condition~\ref{def:jacobi-forms:it:invariance}, $c(\phi; n, r)$ only depends on $4 m n - r^2$ and $r \pmod{(2 m \ZZ)}$.  From Condition~\ref{def:jacobi-forms:it:growths}, one infers that $c(\phi; n, r) = 0$, if $4 m n - r^2 < 0$.  This is actually equivalent to Condition~\ref{def:jacobi-forms:it:growths}.  The vanishing order of $\phi \in \rmJ_{k, l}(\rho)$ is defined as
\begin{gather*}
  \ord\, \phi
=
  \min\{ n \in \QQ \,:\, c(\phi; n, r) \ne 0\}
\text{.}
\end{gather*}
The spaces of Jacobi forms with vanishing order at least $0 \le d \in \QQ$ play a central role in Section~\ref{sec:formal-fj-expansions}.  Define
\begin{gather}
\label{eq:def:jacobi-forms-with-vanishing-order}
  \rmJ_{k,m}(\rho)[d]
=
  \big\{ \phi \in \rmJ_{k,m}(\rho) \,:\, \ord\, \phi \ge d \big\}
\text{.}
\end{gather}

The definitions of maps $\cD_\nu$ that were given on page~29 of~\cite{EZ85} carry over to our setting, if $\rho$ is trivial on $\Aff{1}(\CC)$.  Given a Jacobi form $\phi$ of even weight~$k$, index~$m$, and type $\rho :\, \Gamma^\rmJ \rightarrow \GL{}(V_\rho)$, set
\begin{gather*}
  \cD_{2 \nu} (\phi) \text{,}
:=
  \sum_{0 \le \mu \le \nu}
  \frac{(-1)^\mu \, (2 \nu)! \, (k + 2 \nu - \mu - 2)!}
       {\mu! \, (2 \nu - 2 \mu)! \, (k + \nu - 2)!} \;
  \big( \frac{\partial_z}{2 \pi i} \big)^{2 \nu - 2 \mu}
  \big( \frac{\partial_\tau}{2 \pi i} \big)^\mu \phi
\text{,}
\end{gather*}
and
\begin{gather*}
  \cD_{2 \nu + 1} (\phi) \text{,}
:=
  \sum_{0 \le \mu \le \nu}
  \frac{(-1)^\mu \, (2 \nu + 1)! \, (k + 2 \nu - \mu - 1)!}
       {\mu! \, (2 \nu + 1 - 2 \mu)! \, (k + \nu - 1)!} \;
  \big( \frac{\partial_z}{2 \pi i} \big)^{2 \nu + 1 - 2 \mu}
  \big( \frac{\partial_\tau}{2 \pi i} \big)^\mu \phi
\text{.}
\end{gather*}
Note that either $\cD_{2 \nu}(\phi)$ or $\cD_{2 \nu + 1}(\phi)$ is zero depending on $k$ and $\rho$.  Combining the above maps, we define $\cD(k, \rho)$ for irreducible $\rho$.
\begin{gather*}
  \cD(k, \rho)
=
  \begin{cases}
    \cD_0 \oplus \cdots \oplus \cD_{2 \lfloor m \rfloor } 
  \text{,}
  &
    \text{if $(-1)^k \rho (-I_2)$ is trivial;}
  \\
    \cD_1 \oplus \cdots \oplus \cD_{2 \lfloor m \rfloor - 1} 
  \text{,}
  &
    \text{otherwise.}
  \end{cases}
\text{.}
\end{gather*}
If $\rho = \bigoplus_i \rho_i$ decomposes into irreducible components, then we set 
\begin{gather*}
  \cD(k, \rho)
=
  \bigoplus_i \cD(k, \rho_i)
\text{.}
\end{gather*}

For irreducible $\rho$, it is not hard to see that the image of $\cD(k, \rho)$ restricted to $\rmJ_{k, m}(\rho)[d]$ is contained in $\bigoplus_{\nu = 0}^{\lfloor m \rfloor} \rmM_{k + 2 \nu}(\rho)[d]$ or $\bigoplus_{\nu = 0}^{\lfloor m \rfloor - 1} \rmM_{k + 2 \nu + 1}(\rho)[d]$.

The next theorem is mostly due to Eichler and Zagier~\cite{EZ85}.  We simply adapt it to the case of vector valued Jacobi forms.
\begin{theorem}
\label{thm:jacobi-taylor-coefficients-injective}
Let $k \in \ZZ$, $m \in \QQ$, and suppose that $\rho :\, \Aff{1}(\CC) \otimes \Gamma^\rmJ \rightarrow \GL(V_\rho)$ is trivial on $\Aff{1}(\CC)$.  Then the map $\cD(k, \rho)$ is injective.
\end{theorem}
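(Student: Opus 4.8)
The plan is to carry over to the vector-valued situation the argument Eichler and Zagier use for scalar Jacobi forms~\cite{EZ85}: the point is that $\cD(k,\rho)(\phi)$ reconstructs the $z$-Taylor expansion of $\phi$ at $z=0$, and that this expansion is already pinned down by its first $\lfloor m\rfloor+1$ coefficients. I may assume $\rho$ irreducible, since $\rmJ_{k,m}(\rho)=\bigoplus_i\rmJ_{k,m}(\rho_i)$ compatibly with the splitting $\cD(k,\rho)=\bigoplus_i\cD(k,\rho_i)$; by Proposition~\ref{prop:vanishing-of-jacobi-forms-by-rho} I may assume $m\in\ZZ_{\ge0}$; and I may assume $k\ge1$, since for $k\le0$ the theta decomposition recalled below realizes $\rmJ_{k,m}(\rho)$ as a space of holomorphic vector-valued elliptic modular forms of negative weight $k-\frac{1}{2}$, which is the zero space.

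The first step is to replace $\cD(k,\rho)$ by honest Taylor coefficients. Write $\chi_j(\tau)=\frac{1}{j!}(\partial_z)^j\phi(\tau,z)$ at $z=0$, an element of $V_\rho$, so that $\phi(\tau,z)=\sum_{j\ge0}\chi_j(\tau)z^j$. Substituting this into the definition of $\cD_{2\nu}$ (resp.\ $\cD_{2\nu+1}$), and using that $(\partial_z)^{2\nu-2\mu}(\partial_\tau)^\mu\phi$ at $z=0$ equals $(2\nu-2\mu)!\,(\partial_\tau)^\mu\chi_{2\nu-2\mu}$, one finds that $\cD_{2\nu}(\phi)$ is a nonzero constant times $\chi_{2\nu}$ plus a linear combination of $\tau$-derivatives of $\chi_0,\dots,\chi_{2\nu-2}$; the leading constant equals, up to an explicit nonzero factor, the product $(k+\nu-1)(k+\nu)\cdots(k+2\nu-2)$ (resp.\ $(k+\nu)\cdots(k+2\nu-1)$), which for $k\ge1$ is nonzero (it is the empty product $1$ when $\nu=0$). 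Hence $\cD(k,\rho)(\phi)=0$ implies, by induction on $\nu$, that $\chi_0,\chi_2,\dots,\chi_{2\lfloor m\rfloor}$ all vanish (resp.\ $\chi_1,\chi_3,\dots,\chi_{2\lfloor m\rfloor-1}$ all vanish). It thus remains to show: a Jacobi form in $\rmJ_{k,m}(\rho)$ all of whose Taylor coefficients up to order $2\lfloor m\rfloor$ and of the parity dictated by $(-1)^k\rho(-I_2)$ vanish is identically zero.

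For the second step I would invoke the theta decomposition. From the support condition $c(\phi;n,r)=0$ whenever $4mn-r^2<0$, together with the transformation of $\phi$ under the Heisenberg subgroup of $\Gamma^\rmJ$ — on which, $\rho$ being trivial on $\Aff{1}(\CC)$ and $m$ being integral, the centre acts trivially and the translation lattice through a finite matrix group, say with image generated by $A\in\GL{}(V_\rho)$ — one obtains a decomposition $\phi(\tau,z)=\sum_\mu\Theta_\mu(h_\mu(\tau),\tau,z)$ with holomorphic $h_\mu:\HS^{(1)}\to V_\rho$ and $\Theta_\mu(v,\tau,z)=\sum_{\ell\in\ZZ}(A^{-\ell}v)\,q^{(\mu+2m\ell)^2/4m}\,\zeta^{\mu+2m\ell}$ a twisted index-$m$ theta series (for $A=\mathrm{id}$ this is the classical theta decomposition). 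Differentiating in $z$ at $z=0$ and using $\phi(\tau,-z)=(-1)^k\rho(-I_2)\phi(\tau,z)$ to pair each residue $\mu$ with $-\mu$, one expresses the relevant tuple of Taylor coefficients through $(h_\mu)_\mu$ by a $\tau$-dependent square block matrix $M(\tau)$ over $\mathrm{End}(V_\rho)$. Examining $q$-expansions as $\tau\to i\infty$, the leading term of $\det M(\tau)$ arises by pairing, in each summand of the determinant, a Taylor order with the lowest ($\ell=0$) term of the matching theta series — a term of the shape $q^{\mu^2/4m}(2\pi i\mu)^{2\nu}$ times an invertible matrix — and so equals a nonzero multiple of the Vandermonde determinant in the pairwise distinct squares $0^2,1^2,\dots,\lfloor m\rfloor^2$ (resp.\ of the analogous non-vanishing determinant in the other parity case). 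Hence $M(\tau)$ is invertible for generic $\tau$, forcing $h_\mu\equiv0$ for all $\mu$, and therefore $\phi\equiv0$.

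The step I expect to be the main obstacle is this last one in the presence of a non-trivial type: one must verify that the finite-order twist matrices perturb only the subleading terms of the $\Theta_\mu$, leaving the leading Vandermonde block nondegenerate, and one must treat carefully the boundary residues $\mu=0$ and $\mu=\lfloor m\rfloor$ — where the available Taylor data lives in a proper $A$-invariant subspace of $V_\rho$ — as well as the bookkeeping imposed by the case split on $(-1)^k\rho(-I_2)$. The analytic inputs — convergence of the theta series, holomorphy of the $h_\mu$, and the fact (noted above) that the $\cD_\nu\phi$ are honest vector-valued modular forms — are standard or already available, so once this linear-algebra framework is set up the remaining verifications are routine.
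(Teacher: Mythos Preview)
Your first step—reducing $\cD(k,\rho)(\phi)=0$ to the vanishing of the Taylor coefficients $\chi_0,\dots,\chi_{2\lfloor m\rfloor}$ of the relevant parity—is correct and matches the paper. The divergence is in the second step, and there are two concrete errors before you even reach the part you flag as incomplete.

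First, Proposition~\ref{prop:vanishing-of-jacobi-forms-by-rho} does \emph{not} let you assume $m\in\ZZ$: it only forces $\rho(I_2,(0,0),1)=e^{2\pi i m}$, and for nontrivial~$\rho$ this root of unity need not be~$1$. You only get $Nm\in\ZZ$ for $N$ the order of that central character, so non-integral~$m$ is genuinely present. Second, the image of the Heisenberg lattice $\ZZ^2$ under~$\rho$ is a finite abelian group, not in general cyclic; your twisted theta series records only powers of a single matrix~$A$ and misses the constraint coming from $\rho(I_2,(0,1),0)$, which governs which residues~$\mu$ and which eigenspaces of~$V_\rho$ actually occur. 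The boundary and block issues you flag at the end are downstream of this.

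The paper bypasses all of this. Once the Taylor coefficients vanish through order $2\lfloor m\rfloor$ (hence through $2\lfloor m\rfloor+1$ by parity), pick any linear functional~$f$ on~$V_\rho$ and any~$N$ with $\rho$ trivial on $(N\ZZ)^2\mathop{\widetilde\times}N\ZZ$. Then $z\mapsto f(\phi(\tau,Nz))$ is a scalar holomorphic function of index $N^2m\in\ZZ$ for the lattice $\ZZ\tau+\ZZ$, so it has at most $2N^2m$ zeros in a period. But the (possibly $\rho$-twisted) quasi-periodicity of~$\phi$ under the full~$\ZZ^2$ propagates the vanishing at $z=0$ to every point of $\frac{1}{N}(\ZZ\tau+\ZZ)$, giving at least $N^2(2\lfloor m\rfloor+2)>2N^2m$ zeros of $f(\phi(\tau,N\,\cdot\,))$ in a period. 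Hence $f\circ\phi\equiv0$ for every~$f$, so $\phi\equiv0$. No theta decomposition, no Vandermonde, and non-integral~$m$ is handled automatically by the rescaling.
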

\begin{proof}
We may restrict to the case of irreducible $\rho$.  Further, we only treat the case of trivial $(-1)^k \rho(-I_2)$.  The proof in the other case works completely analogously.

Choose $0 < N \in \ZZ$ such that $\rho$ is trivial on $(N \ZZ)^2 \mathop{\widetilde \times} N \ZZ \subseteq \Gamma^\rmJ$.  In particular, we have $N m \in \ZZ$.  Given any Jacobi form $\phi$ of weight~$k$ and type $\rho :\, \Gamma^\rmJ \rightarrow V_\rho$, odd Taylor coefficients of $\phi$ vanish by assumption on $(-1)^k \rho(-I_2)$.  This follows directly from the Jacobi transformation law applied to $-I_2 \in \SL{2}(\ZZ)$.

Observe that $\cD(k, \rho)$ injectively maps the set of all $2 \nu$th Taylor coefficient ($0 \le \nu \le \lfloor m \rfloor$) to $\bigoplus_{\nu = 0}^{\lfloor m \rfloor} \rmM_{k + 2 \nu}(\rho)$.  If $\cD(k, \rho)( \phi ) = 0$, we find that the first $2 \lfloor m \rfloor + 1 \ge 2 m$ Taylor coefficients of $f \mathop{\circ} \phi$ vanish for any linear functional~$f$ on $V_\rho$.  On the other hand, $z \mapsto f \mathop{\circ} \phi(\tau, Nz)$ is a holomorphic elliptic function of index~$N^2 m \in \ZZ$.  By assumption it has at least $N^2 (2 \lfloor m \rfloor + 2)$ zeros in the range $z = \alpha + \beta \tau$, $\alpha, \beta \in [0, 1)$.  Consequently, $f \mathop{\circ} \phi = 0$, and since $f$ was any functional on $V_\rho$, we have $\phi = 0$.
\end{proof}

Consider the restriction of $\sigma_l$ to $\Aff{1}(\CC) \subset \GL{2}(\CC)$ (see Equation~(\ref{eq:inclusion-aff-gl})) and denote it by $\sigma_l$ as well.  For $m \in \ZZ$, it was shown in~\cite{IK11} that
\begin{gather*}
  \rmJ_{k, m}(\sigma_l)
\cong
  \bigoplus_{i = 0}^l \rmJ_{k + i, m}
\text{.}
\end{gather*}
This isomorphism is induced by differential operators with constant coefficients as stated on page~786 of~\cite{IK11}.  Hence it generalized to arbitrary $m \in \QQ$ and to any representation $\rho :\, \Gamma^\rmJ \rightarrow \GL{}(V_\rho)$.  We have
\begin{gather*}
  \rmJ_{k, m}(\sigma_l \otimes \rho)[d]
\cong
  \bigoplus_{i = 0}^l \rmJ_{k + i, m}(\rho)[d]
\text{.}
\end{gather*}
Combining this with the statement of~Theorem~\ref{thm:jacobi-taylor-coefficients-injective}, we obtain
\begin{corollary}
\label{cor:jacobi-with-representation-taylor-coefficients-injective}
Let $k \in \ZZ$, $m \in \QQ$, and $\rho :\, \Gamma^\rmJ \rightarrow V_\rho$ be an irreducible unitary representation.  Then there is an injective map
\begin{gather*}
  \rmJ_{k, m}(\sigma_l \otimes \rho)[d]
\hookrightarrow
  \bigoplus_{i = 0}^l
  \begin{cases}
    \bigoplus_{\nu = 0}^{\lfloor m \rfloor} M_{k + i + 2 \nu}(\rho)[d]
  \text{,}
  &
    \text{if $(-1)^{k + i} \rho(-I_2)$ is trivial;}
  \\
    \bigoplus_{\nu = 0}^{\lfloor m \rfloor - 1} M_{k + i + 2 \nu + 1}(\rho)[d]
  \text{,}
  &
    \text{otherwise.}
  \end{cases}
\end{gather*}
\end{corollary}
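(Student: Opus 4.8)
The plan is to obtain the injection by composing two facts that are already in hand. First I would invoke the isomorphism $\rmJ_{k, m}(\sigma_l \otimes \rho)[d] \cong \bigoplus_{i = 0}^l \rmJ_{k + i, m}(\rho)[d]$ recorded just above, i.e.\ the vector valued analogue of the result of~\cite{IK11}. Since this isomorphism is induced by differential operators with constant coefficients, it respects the type~$\rho$ and does not lower the vanishing order, so it restricts to the graded pieces cut out by $[d]$. Consequently the claim is reduced to producing, for each $i$ with $0 \le i \le l$, an injective map from $\rmJ_{k + i, m}(\rho)[d]$ into $\bigoplus_{\nu = 0}^{\lfloor m \rfloor} \rmM_{k + i + 2 \nu}(\rho)[d]$ when $(-1)^{k + i} \rho(-I_2)$ is trivial, and into $\bigoplus_{\nu = 0}^{\lfloor m \rfloor - 1} \rmM_{k + i + 2 \nu + 1}(\rho)[d]$ otherwise.

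Next I would apply Theorem~\ref{thm:jacobi-taylor-coefficients-injective} with $k$ replaced by $k + i$. Regarding $\rho$ as a representation of $\Aff{1}(\CC) \otimes \Gamma^\rmJ$ that is trivial on the first factor, the theorem gives that $\cD(k + i, \rho)$ is injective on $\rmJ_{k + i, m}(\rho)$. The case split in the definition of $\cD(k + i, \rho)$ is precisely the one on triviality of $(-1)^{k + i} \rho(-I_2)$: in the trivial case $\cD(k + i, \rho) = \cD_0 \oplus \cdots \oplus \cD_{2 \lfloor m \rfloor}$, whose image lies in $\bigoplus_{\nu = 0}^{\lfloor m \rfloor} \rmM_{k + i + 2 \nu}(\rho)$, and in the remaining case $\cD(k + i, \rho) = \cD_1 \oplus \cdots \oplus \cD_{2 \lfloor m \rfloor - 1}$, with image in $\bigoplus_{\nu = 0}^{\lfloor m \rfloor - 1} \rmM_{k + i + 2 \nu + 1}(\rho)$. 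These are exactly the two target spaces demanded above. By the observation stated just before Theorem~\ref{thm:jacobi-taylor-coefficients-injective}, $\cD(k + i, \rho)$ moreover carries $\rmJ_{k + i, m}(\rho)[d]$ into the corresponding sum of the $\rmM_{\ast}(\rho)[d]$, so the injection descends to the $[d]$ pieces. Taking the direct sum over $i = 0, \dots, l$ and composing with the first isomorphism yields the asserted map.

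I do not expect a genuine obstacle here: both ingredients are already established, and what remains is bookkeeping. The two points deserving a little care are that the isomorphism from~\cite{IK11} is compatible with the grading by vanishing order — immediate, since its defining operators are polynomials in $\partial_\tau$ and $\partial_z$ with constant coefficients and hence do not decrease the $q$\nbd order — and that the parity conditions selecting which $\cD_\nu$ survive transfer correctly under the shift $k \mapsto k + i$, which is exactly why the case distinction in the statement of the corollary is phrased in terms of $(-1)^{k + i} \rho(-I_2)$ rather than $(-1)^{k} \rho(-I_2)$.
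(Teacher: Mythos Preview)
Your proposal is correct and follows exactly the approach the paper intends: the corollary is stated immediately after the isomorphism from~\cite{IK11} and is introduced with ``Combining this with the statement of Theorem~\ref{thm:jacobi-taylor-coefficients-injective}, we obtain,'' which is precisely the two-step composition you spell out. Your added remarks on why the $[d]$-filtration is preserved and why the parity split shifts to $(-1)^{k+i}\rho(-I_2)$ are the natural elaborations of that one-line justification.
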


We finish this section with a statement on the connection of Siegel modular forms and Jacobi forms.
\begin{proposition}
\label{prop:fourier-jacobi-expansion-for-siegel-modular-forms}
Let $\Phi \in \rmM^{(2)}_{k, l}(\rho)$, $k \in \frac{1}{2} \ZZ$, $l \in \ZZ$ be a Siegel modular form, where $\rho$ is a unitary representation of $\Sp{2}(\ZZ)$ (whose kernel has finite index).  Then for all $m \in \QQ$ the function
\begin{gather*}
  \phi_m(\tau, z)
=
  \sum_{n, r \in \QQ} c(\Phi; n, r, m) \, q^n \zeta^r
\end{gather*}
is a Jacobi form of weight~$k$, index~$m$, and type~$\sigma_l \otimes \rho$.

In particular, we have a Fourier Jacobi expansion
\begin{gather*}
  \sum_{0 \le m \in \QQ}
  \phi_m(\tau, z) \, q^{\prime\, m}
\text{.}
\end{gather*}
\end{proposition}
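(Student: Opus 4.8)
The plan is to start from the absolutely convergent Fourier Jacobi expansion of a Siegel modular form and show that each Fourier Jacobi coefficient is a Jacobi form in the sense of Definition~\ref{def:jacobi-forms}. The first step is to recall that $\Phi \in \rmM^{(2)}_{k,l}(\rho)$ possesses an absolutely convergent Fourier expansion
\begin{gather*}
  \Phi\left( \left(\begin{smallmatrix} \tau & z \\ z & \tau' \end{smallmatrix}\right) \right)
=
  \sum_{\substack{0 \le \left(\begin{smallmatrix} n & r/2 \\ r/2 & m \end{smallmatrix}\right) \in \MatT{2}(\QQ)}}
  c(\Phi; n, r, m)\, q^n \zeta^r q^{\prime\, m}
\text{,}
\end{gather*}
which follows from invariance under the translation subgroup of $\Sp{2}(\ZZ)$ together with the Koecher principle (positive semidefiniteness of the index matrix). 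Grouping terms by the power of $q'$ then gives the purported expansion $\sum_{0 \le m \in \QQ} \phi_m(\tau, z)\, q^{\prime\, m}$, with $\phi_m$ holomorphic on $\HS^\rmJ$ by absolute and locally uniform convergence. It remains to verify that $\phi_m \in \rmJ_{k, m}(\sigma_l \otimes \rho)$.

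\emph{Invariance under $\Gamma^\rmJ$.} The key step is to exploit the embedding $\Gamma^\rmJ \hookrightarrow \Sp{2}(\ZZ)$ described in Section~\ref{sec:jacobi-forms}. For $\gamma^\rmJ = \big(\gamma, (\lambda,\mu), \kappa\big)$ with image $\widetilde{\gamma^\rmJ} \in \Sp{2}(\ZZ)$, one computes directly how $\widetilde{\gamma^\rmJ}$ acts on $\left(\begin{smallmatrix} \tau & z \\ z & \tau' \end{smallmatrix}\right)$: the $\tau'$-coordinate transforms as $\tau' \mapsto \tau' + (\text{terms involving } \tau, z)$, so that the factor $q^{\prime\, m}$ transforms by an automorphy factor depending only on $(\tau, z)$, while the automorphy cocycle $(\det^k \otimes \sigma_l)(C Z + D)\,\rho(\widetilde{\gamma^\rmJ})$ restricted to the image of $\Gamma^\rmJ$ reproduces exactly the Jacobi slash factor $\big|^\rmJ_{k, m, \sigma_l \otimes \rho}$. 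Matching powers of $q'$ on both sides of $\Phi\big|_{\sigma_l \otimes\, \det^k, \rho}\,\widetilde{\gamma^\rmJ} = \Phi$ and using linear independence of the functions $q^{\prime\, m}$ (valid because the series converges absolutely on an open set and the $m$ range over a fixed arithmetic progression) yields $\phi_m\big|^\rmJ_{k, m, \sigma_l \otimes \rho}\,(\gamma^\rmJ, \omega) = \phi_m$. This establishes Condition~(\ref{def:jacobi-forms:it:invariance}).

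\emph{Growth condition.} For Condition~(\ref{def:jacobi-forms:it:growths}), I would use that $\Phi$ is holomorphic at the cusp, equivalently that $c(\Phi; n, r, m) = 0$ unless $\left(\begin{smallmatrix} n & r/2 \\ r/2 & m \end{smallmatrix}\right) \ge 0$, which forces $4 m n - r^2 \ge 0$ for every nonzero Fourier coefficient of $\phi_m$. As recalled in Section~\ref{sec:jacobi-forms}, this support condition on Fourier coefficients is equivalent to boundedness of $\phi_m(\tau, \alpha\tau + \beta)$ as $y \to \infty$ for all $\alpha, \beta \in \QQ$, so $\phi_m$ satisfies the growth condition. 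The final clause, existence of the Fourier Jacobi expansion, is then immediate from what has been proved.

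\emph{Main obstacle.} I expect the main technical point to be the bookkeeping in the invariance step: one must carefully track the automorphy factor $(\det^k \otimes \sigma_l)(CZ+D)$ under the embedding $\Gamma^\rmJ \hookrightarrow \Sp{2}(\ZZ)$ and check that, after restriction, it decomposes into the weight factor $(c\tau+d)^{2k}$, the exponential theta-factor $e^m\!\big(\tfrac{-c(z+\lambda\tau+\mu)^2}{c\tau+d} + 2\lambda z + \lambda^2\tau + \kappa\big)$, and the $\sigma_l$-part acting through $\Aff{1}(\CC)$ exactly as in the definition of $\big|^\rmJ_{k,m,\sigma_l\otimes\rho}$ — including the correct interaction with the chosen square roots when $k \in \tfrac12\ZZ$. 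Everything else is a routine consequence of absolute convergence and the Koecher principle.
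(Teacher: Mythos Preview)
Your proposal is correct and is precisely the ``straight forward verification'' the paper alludes to: the paper's own proof consists of a single sentence to that effect, and your outline (Fourier expansion via translations and the Koecher principle, then matching the Siegel automorphy factor under the embedding $\Gamma^\rmJ \hookrightarrow \Sp{2}(\ZZ)$ with the Jacobi slash, then the support condition for the growth estimate) is exactly what is meant. The only thing to be careful about, as you note, is the bookkeeping for the $\sigma_l$-part via the $\Aff{1}(\CC)$-embedding and the square roots in the half-integral case, but this is routine.
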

\begin{proof}
This is a straight forward verification.
\end{proof}

\section{Formal Fourier Jacobi expansions}
\label{sec:formal-fj-expansions}

Given a Siegel modular forms $\Phi \in \rmM^{(2)}_{k, l}(\rho)$, there is a Fourier Jacobi expansion as discussed at the end of the previous section.  The notion of formal Fourier Jacobi expansions mimics this.
\begin{definition}
Fix $k \in \frac{1}{2}\ZZ$, $l \in \ZZ$, and a unitary representation $\rho$ of $\tSp{2}(\ZZ)$ (whose kernel has finite index).  Let
\begin{gather*}
  \Phi
=
  \sum_{0 \le m \in \QQ} \phi_m \, q^{\prime\, m}
\in
  \bigoplus_{m} \rmJ_{k,\, m}(\rho \otimes \sigma_l) \, q^{\prime\, m}
\text{.}
\end{gather*}
We call $\Phi$ a \emph{formal Fourier Jacobi expansion} of weight~$(k, l)$ and type~$\rho$ if
\begin{gather*}
  c(\phi_m; n, r)
=
  (\det{}^k \otimes \sigma_l)^{-1}(S)\, \rho^{-1}(\rot(S))\, c(\phi_n; m, r)
\end{gather*}
for all $n, r \in \QQ$, where $S = \left(\begin{smallmatrix} 0 & 1 \\ 1 & 0 \end{smallmatrix}\right)$.
\end{definition}
\noindent Denote the space of such expansions by $\rmF\rmM^{(2)}_{k,l}(\rho)$.

\begin{proposition}
\label{prop:formal-fourier-jacobi-determination-of-coefficients}
Suppose that $\Phi$ and $\Psi$ are formal Fourier Jacobi expansions of same weight and type.  If $\phi_m = \psi_m$ for $0 \le m < m'$ for some $0 < m' \in \QQ$, then $\phi_{m'} - \psi_{m'} \in \rmJ_{k, m'}(\rho \otimes l)[m']$.
\end{proposition}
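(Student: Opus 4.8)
The plan is to pass to the difference $\Phi - \Psi$ and read the claim off the defining symmetry relation of formal Fourier Jacobi expansions directly; no convergence or analytic input is needed.

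First I would set $\chi := \Phi - \Psi$ and $\eta_m := \phi_m - \psi_m$. Since each $\rmJ_{k,m}(\rho\otimes\sigma_l)$ is a vector space and the defining identity for membership in $\rmF\rmM^{(2)}_{k,l}(\rho)$ is linear in the Fourier coefficients, the expansion $\chi = \sum_{0 \le m \in \QQ} \eta_m\, q^{\prime\, m}$ again lies in $\rmF\rmM^{(2)}_{k,l}(\rho)$, and by hypothesis $\eta_m = 0$ for all $0 \le m < m'$. It therefore suffices to prove $\ord\, \eta_{m'} \ge m'$, that is, $c(\eta_{m'}; n, r) = 0$ for every $n \in \QQ$ with $n < m'$ and every $r \in \QQ$.

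Fix such $n$ and $r$. If $n < 0$, then $c(\eta_{m'}; n, r) = 0$ because $\eta_{m'}$ is a holomorphic Jacobi form, whose Fourier expansion is supported on non-negative indices. If $0 \le n < m'$, I would apply the defining relation for $\chi$ to the index pair $(m', n)$, obtaining
\[
  c(\eta_{m'}; n, r)
=
  (\det{}^k \otimes \sigma_l)^{-1}(S)\, \rho^{-1}(\rot(S))\, c(\eta_n; m', r)
\text{,}
\]
and the right-hand side is zero since $\eta_n = 0$. Hence $c(\eta_{m'}; n, r) = 0$ in both cases, which is precisely the assertion $\eta_{m'} \in \rmJ_{k,m'}(\rho\otimes\sigma_l)[m']$.

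There is no genuine obstacle in this argument. The only two points worth stating explicitly are that $\rmF\rmM^{(2)}_{k,l}(\rho)$ is closed under differences --- immediate from the linearity of the defining identity --- and that the symmetry relation may legitimately be invoked with the larger index $m'$ in the first slot and the smaller index $n$ in the second; this is covered by the "for all $n, r \in \QQ$" (together with $m' \ge 0$) in the definition.
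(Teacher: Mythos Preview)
Your argument is correct and is essentially the same as the paper's: both use the defining symmetry relation at the index pair $(m',n)$ with $0 \le n < m'$ to reduce $c(\phi_{m'}-\psi_{m'};n,r)$ to a coefficient of $\phi_n-\psi_n$, which vanishes by hypothesis. The only cosmetic difference is that you first pass to the difference $\chi=\Phi-\Psi$ and explicitly note its membership in $\rmF\rmM^{(2)}_{k,l}(\rho)$, while the paper works directly with $c(\phi_{m'};n,r)-c(\psi_{m'};n,r)$.
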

\begin{proof}
For any $0 \le n < m'$ and any $r \in \ZZ$, we have
\begin{align*}
&\,
  c(\phi_{m'} - \psi_{m'}; n, r)
=
  c(\phi_{m'}; n, r) - c(\psi_{m'}; n, r)
\\
=&\,
  (\det{}^k \otimes \sigma_l)^{-1}(S)\, \rho^{-1}(\rot(S))
  \big( c(\phi_{n}; m', r) - c(\psi_{n}; m', r) \big)
=
  0
\text{.}
\end{align*}
\end{proof}

The idea of proof of the next theorem is very much inspired by~\cite{IPY12}, and was initially brought up by Akoi~\cite{Ao00}.
\begin{theorem}
\label{thm:generating-series-for-formal-fourier-jacobi}
We have
\begin{gather*}
  \dim \rmF\rmM^{(2)}_{k,l}(\rho)
\le
  \dim \rmM^{(2)}_{k,l}(\rho) + O(k^2)
\end{gather*}
as $k \rightarrow \infty$, $k \in 2 \ZZ$.
\end{theorem}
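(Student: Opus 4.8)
The plan is to bound the dimension of $\rmF\rmM^{(2)}_{k,l}(\rho)$ by controlling how much information a formal Fourier Jacobi expansion carries, Jacobi coefficient by Jacobi coefficient, and to compare the resulting bound with the known asymptotic $\dim\rmM^{(2)}_{k,l}(\rho)\asymp\dim(\sigma_l\otimes\rho)\cdot\dim\rmM^{(2)}_k$ from~\eqref{eq:asymptotic-dimensions-for-siegel-modular-forms}. First I would observe that, by Proposition~\ref{prop:formal-fourier-jacobi-determination-of-coefficients}, once $\phi_0,\dots,\phi_{m-1}$ are fixed the next coefficient $\phi_m$ is determined modulo $\rmJ_{k,m}(\rho\otimes\sigma_l)[m]$, i.e.\ modulo Jacobi forms vanishing to order $m$. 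Hence
\begin{gather*}
  \dim\rmF\rmM^{(2)}_{k,l}(\rho)
\le
  \sum_{0\le m\in\QQ}\dim\rmJ_{k,m}(\sigma_l\otimes\rho)[m].
\end{gather*}
(The sum is really finite, but it is convenient to write it this way and then cut it off.) So the problem reduces to estimating $\sum_m\dim\rmJ_{k,m}(\sigma_l\otimes\rho)[m]$ and showing it is $\dim\rmM^{(2)}_{k,l}(\rho)+O(k^2)$.

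Next I would feed in Corollary~\ref{cor:jacobi-with-representation-taylor-coefficients-injective}, which embeds $\rmJ_{k,m}(\sigma_l\otimes\rho)[m]$ into a direct sum of spaces of elliptic modular forms $\rmM_{k+i+2\nu+\epsilon}(\rho)[m]$ with $0\le i\le l$ and $0\le\nu\le\lfloor m\rfloor$ (roughly). The key point is the vanishing-order constraint: an elliptic modular form of weight $\kappa$ vanishing to order $\ge m$ only exists when $m$ is bounded by $\kappa/12$ times a constant, since such a form is (a vector of) cusp-type sections of a line bundle of degree $O(\kappa)$ on a curve, so vanishing to order $m$ forces $m=O(\kappa)$, hence $m=O(k)$ in our range. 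Moreover, using $\dim\rmM_\kappa(\rho)[m]=\dim\rmM_\kappa(\rho)-m\cdot(\#\text{cusps-type data})+O(1)$ via the valence formula, one gets $\dim\rmM_{k+i+2\nu+\epsilon}(\rho)[m]\le\dim\rho\cdot\max(0,\,\tfrac{k}{12}-m)+O(1)$ for each summand. Summing over $i\le l$, $\nu\le m$, and then over $m\le O(k)$ produces a main term of size $\sum_{m}\sum_{\nu\le m}\dim(\sigma_l\otimes\rho)\cdot(\tfrac{k}{12}-\text{stuff})$, which one recognizes (after the geometric bookkeeping) as matching $\dim(\sigma_l\otimes\rho)\cdot\dim\rmM^{(2)}_k$ to leading order, since $\dim\rmM^{(2)}_k$ itself grows like $ck^3$ and the double sum over $m$ and $\nu$ of a linear-in-$k$ quantity also grows like $k^3$ with the right constant. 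The error terms $O(1)$ per summand, summed over $O(k)$ values of $m$ and $O(k)$ values of $(i,\nu)$, contribute $O(k^2)$, which is exactly the claimed slack.

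The delicate part — and the main obstacle — is matching the leading constant, i.e.\ proving that the cubic-in-$k$ main term obtained from $\sum_m\sum_\nu\dim\rmM_{k+\cdots}(\rho)[m]$ equals $\dim(\sigma_l\otimes\rho)\cdot\dim\rmM^{(2)}_k$ and not something strictly larger. For this I would not attempt a bare-hands count; instead I would compare with the analogous decomposition of genuine Siegel modular forms. Every $\Phi\in\rmM^{(2)}_{k,l}(\rho)$ has a Fourier Jacobi expansion whose $m$th coefficient lies in $\rmJ_{k,m}(\sigma_l\otimes\rho)$, and by the same Proposition-\ref{prop:formal-fourier-jacobi-determination-of-coefficients} argument the restriction map $\Phi\mapsto(\phi_0,\dots,\phi_{M})$ is injective once $M$ is large enough (of order $k$), with image a subspace of $\bigoplus_{m\le M}\rmJ_{k,m}(\sigma_l\otimes\rho)$ on which the successive quotients are the $\rmJ_{k,m}(\sigma_l\otimes\rho)[m]$. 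Thus $\dim\rmM^{(2)}_{k,l}(\rho)$ and $\dim\rmF\rmM^{(2)}_{k,l}(\rho)$ are both sandwiched between $\sum_{m}\dim\rmJ_{k,m}(\sigma_l\otimes\rho)[m]$ and quantities differing from it by the contribution of finitely many "boundary" values of $m$ near the cutoff — each contributing $O(k^2)$ because $\dim\rmJ_{k,m}(\sigma_l\otimes\rho)=O(k^2)$ uniformly in $m$ (again from Corollary~\ref{cor:jacobi-with-representation-taylor-coefficients-injective}: it injects into $O(l\cdot m)$ spaces $\rmM_\kappa(\rho)$ of dimension $O(k)$, but with $m=O(k)$ this is $O(k^2)$ only after using that for most $(i,\nu)$ the space is in fact small — one uses instead that the total over all $m$ telescopes). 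The cleanest route is: show
\begin{gather*}
  \dim\rmM^{(2)}_{k,l}(\rho)
\le
  \dim\rmF\rmM^{(2)}_{k,l}(\rho)
\le
  \dim\rmM^{(2)}_{k,l}(\rho)+O(k^2),
\end{gather*}
where the left inequality is the trivial inclusion (Proposition~\ref{prop:fourier-jacobi-expansion-for-siegel-modular-forms} realizes Siegel modular forms as formal FJ expansions) and the right inequality follows by comparing the two filtrations above: their graded pieces agree except possibly at the $O(k)$ top values of $m$, each of which adds at most $\dim\rmJ_{k,m}(\sigma_l\otimes\rho)=O(k)$ (for $m$ near the support boundary the Jacobi form must vanish to near-maximal order, collapsing the dimension), giving the $O(k^2)$ defect. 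Making the "graded pieces agree except near the top" statement precise — carefully handling the half-integral shifts $\epsilon$ and the finitely many small $\nu$ — is the one genuinely technical step; everything else is bookkeeping with the valence formula and~\eqref{eq:asymptotic-dimensions-for-siegel-modular-forms}.
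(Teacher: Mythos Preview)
Your opening reduction matches the paper exactly: via Proposition~\ref{prop:formal-fourier-jacobi-determination-of-coefficients} you embed $\rmF\rmM^{(2)}_{k,l}(\rho)$ into $\bigoplus_{m}\rmJ_{k,m}(\sigma_l\otimes\rho)[m]$, and via Corollary~\ref{cor:jacobi-with-representation-taylor-coefficients-injective} you pass to elliptic modular forms with vanishing conditions. The problem is what you do after that.

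Your proposed resolution of the ``delicate part'' is circular. You want to show $\dim\rmF\rmM\le\dim\rmM+O(k^2)$ by arguing that the filtrations on $\rmM^{(2)}_{k,l}(\rho)$ and on $\rmF\rmM^{(2)}_{k,l}(\rho)$ have the same graded pieces away from the boundary. But both embeddings into $\bigoplus_m\rmJ_{k,m}[m]$ give only \emph{upper} bounds. To conclude, you would need a \emph{lower} bound $\dim\rmM^{(2)}_{k,l}(\rho)\ge\sum_m\dim\rmJ_{k,m}[m]-O(k^2)$, i.e.\ that the Fourier--Jacobi map from genuine Siegel forms is nearly surjective onto the associated graded. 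There is no a~priori reason for this, and in fact it is essentially equivalent to the theorem you are trying to prove. (Your valence-formula estimate $\dim\rmM_\kappa(\rho)[m]\approx\dim\rho\cdot(\kappa/12-m)$ is the right ingredient, but you then abandon it rather than summing it; note also that $\kappa=k+2\nu$ depends on $\nu$, so the bound is $(k+2\nu)/12-m$, not $k/12-m$.)

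The paper never touches $\rmM^{(2)}_{k,l}(\rho)$ in the estimate at all. It bounds $\sum_m\dim\rmJ_{k,m}(\rho)[m]$ directly via a generating-function identity. Using $\rmM^{(1)}_{\kappa}(\rho)[m]\cong\rmM^{(1)}_{\kappa-12m}(\rho)$ (division by~$\Delta^m$) and the Marks--Mason polynomial $p_\rmg(\rho;t)$, one gets
\begin{gather*}
  \sum_{2\mid k}\Big(\sum_{m\ge0}\sum_{\nu=0}^{m}\dim\rmM^{(1)}_{k+2\nu}(\rho)[m]\Big)t^k
\;\le\;
  \frac{p_\rmg(\rho;t)}{(1-t^4)(1-t^6)}\cdot\sum_{m\ge0}\sum_{\nu=0}^{m}t^{12m-2\nu},
\end{gather*}
and the inner double sum evaluates in closed form to $\frac{1}{(1-t^{10})(1-t^{12})}$. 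Since Igusa's generators of $\bigoplus_k\rmM^{(2)}_k$ have weights $4,6,10,12$, the right-hand side is exactly $p_\rmg(\rho;1)$ times the Hilbert series of $\rmM^{(2)}_\bullet$, so the $k^3$ terms match on the nose and the $O(k^2)$ error comes for free from $p_\rmg$ being a polynomial together with~\eqref{eq:asymptotic-dimensions-for-siegel-modular-forms}. This arithmetic coincidence (due to Aoki, and used in~\cite{IPY12}) is the step your proposal is missing.
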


\begin{corollary}
\label{cor:dimension-estimates-for-formal-fourier-jacobi}
There is $0 < k_0 \in \ZZ$ such that
\begin{gather*}
  \dim \rmF\rmM_{k,l}(\rho) < 2 \dim \rmM_{k, l}(\rho)
\end{gather*}
for all $k > k_0$.
\end{corollary}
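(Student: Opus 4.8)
The plan is to deduce the bound directly from Theorem~\ref{thm:generating-series-for-formal-fourier-jacobi}. That theorem gives $\dim\rmF\rmM_{k,l}(\rho)\le\dim\rmM_{k,l}(\rho)+O(k^2)$ for $k\in2\ZZ$, so for even $k$ it suffices to know that $\dim\rmM_{k,l}(\rho)$ eventually outgrows this $O(k^2)$ error, i.e.\ that it grows like a positive constant times $k^3$. Concretely I would proceed in four steps: (i) dispose of the case where every space involved vanishes; (ii) extract the cubic growth of $\dim\rmM_{k,l}(\rho)$ from Section~\ref{ssec:dimension-formulas}; (iii) conclude for $k\in2\ZZ$ using Theorem~\ref{thm:generating-series-for-formal-fourier-jacobi}; and (iv), should one wish to allow $k\notin2\ZZ$ in the statement, reduce to the even case by multiplication with a fixed Siegel modular form.

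For (i): if $\sigma_l(-I_2)\,\rho(-I_4)$ is nontrivial then, as recalled in Section~\ref{sec:siegel-modular-forms}, the bundle $\HS^{(2)}\times(V_{\sigma_l}\otimes V_\rho)$ does not descend to $\Sp{2}(\ZZ)\backslash\HS^{(2)}$ and $\rmM_{k,l}(\rho)=\{0\}$ for every $k$. One checks that the same parity obstruction applies to $\rmF\rmM_{k,l}(\rho)$: the element $-I_4=\rot(-I_2)$ lies in the subgroup of $\Sp{2}(\ZZ)$ generated by the embedded Jacobi group and $\rot\left(\begin{smallmatrix}0&1\\1&0\end{smallmatrix}\right)$ --- under whose coefficientwise (formal) invariance $\rmF\rmM_{k,l}(\rho)$ is defined --- so $\rmF\rmM_{k,l}(\rho)=\{0\}$ as well and both sides of the asserted inequality vanish. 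I would therefore assume from now on that $\sigma_l(-I_2)\,\rho(-I_4)$ is trivial.

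Under that assumption, (\ref{eq:asymptotic-dimensions-for-siegel-modular-forms}) gives $\dim\rmM_{k,l}(\rho)\asymp\dim(\sigma_l\otimes\rho)\cdot\dim\rmM^{(2)}_k$ as $k\to\infty$ in $\ZZ$, while $\dim\rmM^{(2)}_k\asymp c_0\,k^3$ for a constant $c_0>0$ that does not depend on the residue class of $k$; the latter is classical (Igusa presents $\bigoplus_k\rmM^{(2)}_k$ as a finite module over a polynomial ring in generators of weights $4,6,10,12$) and is also implicit in the Hirzebruch--Riemann--Roch computation of Section~\ref{ssec:dimension-formulas}, whose leading term is a fixed positive cubic in $k$. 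Hence $\dim\rmM_{k,l}(\rho)\asymp c_0\,\dim(\sigma_l\otimes\rho)\,k^3$, and since $\dim(\sigma_l\otimes\rho)\ge1$ the ratio $\dim\rmM_{k,l}(\rho)/k^2$ tends to infinity. For $k\in2\ZZ$ the estimate of Theorem~\ref{thm:generating-series-for-formal-fourier-jacobi} then reads $\dim\rmF\rmM_{k,l}(\rho)\le\dim\rmM_{k,l}(\rho)+O(k^2)<2\dim\rmM_{k,l}(\rho)$ once $k$ exceeds a suitable $k_0$, which is the claim.

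Step (iv), if wanted, is routine: for $k\notin2\ZZ$ I would fix once and for all a nonzero Siegel modular form $\chi$ of scalar weight $k''$ with $k+k''\in2\ZZ$ (the weight-$35$ cusp form when $k$ is an odd integer; a fixed form of suitable half-integral weight when $k\in\frac{1}{2}\ZZ\setminus\ZZ$). Multiplication by $\chi$ carries $\rmF\rmM_{k,l}(\rho)$ into $\rmF\rmM_{k+k'',l}(\rho)$ --- the Fourier Jacobi symmetry condition survives because $\chi$ is genuinely $\Sp{2}(\ZZ)$-invariant --- and is injective because the bigraded ring of Jacobi forms is an integral domain (its elements are holomorphic functions on the connected space $\HS^\rmJ$). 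Combining $\dim\rmF\rmM_{k,l}(\rho)\le\dim\rmF\rmM_{k+k'',l}(\rho)$ with step (iii) at weight $k+k''$ and the fact that $\dim\rmM_{k+k'',l}(\rho)$ and $\dim\rmM_{k,l}(\rho)$ share the leading asymptotic $c_0\,\dim(\sigma_l\otimes\rho)\,k^3$ (the residue-independence of $c_0$) then gives the inequality for all large $k$ of this parity. There is no deep obstacle anywhere here; the points demanding care are exactly the two just invoked --- that the cubic leading coefficient $c_0$ of $\dim\rmM^{(2)}_k$ is the same in every residue class (so the weight shift in (iv) is harmless) and that multiplication by $\chi$ genuinely lands in, and is injective on, spaces of formal Fourier Jacobi expansions --- the quantitative heart of the matter already being Theorem~\ref{thm:generating-series-for-formal-fourier-jacobi}.
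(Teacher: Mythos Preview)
Your core argument---steps (ii) and (iii)---is correct and is exactly what the paper intends: the corollary carries no separate proof and is meant as an immediate consequence of Theorem~\ref{thm:generating-series-for-formal-fourier-jacobi} together with the cubic asymptotic~(\ref{eq:asymptotic-dimensions-for-siegel-modular-forms}). The paper only invokes the corollary for even~$k$ (in the proof of Theorem~\ref{thm:formal-fourier-jacobi-series-equal-siegel-modular-forms}), so your step~(iv) is superfluous; if you do want to keep it, note that for half-integral $k$ your auxiliary form $\chi$ necessarily carries a nontrivial type~$\rho_\chi$, so multiplication lands in $\rmF\rmM_{k+k'',l}(\rho\otimes\rho_\chi)$ rather than $\rmF\rmM_{k+k'',l}(\rho)$, and the asymptotic you compare against must be adjusted accordingly. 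As for step~(i): if both sides vanish then the \emph{strict} inequality $0<0$ is false, so that case is not ``done'' but rather an imprecision in the paper's statement (harmless, since the degenerate case never arises in the application).
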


\begin{proof}[{Proof of Theorem~\ref{thm:generating-series-for-formal-fourier-jacobi}}]
By Proposition~\ref{prop:fourier-jacobi-expansion-for-siegel-modular-forms} and~\ref{prop:formal-fourier-jacobi-determination-of-coefficients}, we have
\begin{gather*}
  \rmF\rmM^{(2)}_{k,l}(\rho)
\hookrightarrow
  \bigoplus_{0 \le m \in \QQ}\rmJ_{k, m}(\sigma_l \otimes \rho)[m]
\text{.}
\end{gather*}
We are hence reduced to estimating the dimension of the right hand side.  Corollary~\ref{cor:jacobi-with-representation-taylor-coefficients-injective} says that there is a map
\begin{gather*}
  \rmJ_{k,m} (\sigma_l \otimes \rho)[m]
\hookrightarrow
  \bigoplus_{i = 0}^l \rmJ_{k + i, m}(\rho)[m]
\text{.}
\end{gather*}
Plug this into the previous equation.  Then Equation~(\ref{eq:asymptotic-dimensions-for-siegel-modular-forms}) shows that it suffices to treat the case~$l = 0$.

By Proposition~\ref{prop:vanishing-of-jacobi-forms-by-rho}, we need to consider
\begin{gather*}
  \bigoplus_{0 \le m_0 < 1} \;
  \bigoplus_{0 \le m \in m_0 + \ZZ} \rmJ_{k, m}(\sigma_l \otimes \rho(m_0))[m]
\end{gather*}
where $\rho = \bigoplus_{m_0} \rho(m_0)$ and $\rho(m_0)\big( I_2, (0,0), 1 \big) = \exp(2 \pi i\, m_0)$.

We consider the generating function of this series, and adapt the calculations presented in Section~4 of~\cite{IPY12}.  The change of order of summation is justified because all coefficients are positive.
\begin{align*}
&
  \sum_{2 \isdiv k \, \in \ZZ} \; \sum_{0 \le m_0 < 1} \;
  \sum_{m = 0}^\infty \sum_{\nu = 0}^{m} \dim \rmM_{k + 2 \nu}(\rho(m_0))[m_0 + m]\, t^k
\\[2pt]
=
&
  \sum_{0 \le m_0 < 1} \; \sum_{m = 0}^\infty \sum_{\nu = 0}^m
  \Big( \sum_{2 \isdiv k \,\in \ZZ} \dim \rmM_{k + 2 \nu - 12 m}(\rho(m_0))[m_0] \, t^{k + 2 \nu - 12 m} \Big)
  t^{12 m - 2 \nu}
\\[2pt]
\le
&
  \frac{\sum_{0 \le m_0 < 1} p_\rmg(\rho(m_0); t)}{(1 - t^4)(1 - t^6)} \;
  \sum_{m = 0}^\infty \sum_{\nu = 0}^{m} t^{12 m - 2 \nu}
=
  \frac{p_\rmg(\rho; t)}{(1 - t^4)(1 - t^6)} \;
  \sum_{m = 0}^\infty \sum_{\nu = 0}^{m} t^{12 m - 2 \nu}
\text{.}
\end{align*}
Here, $\le$  means that every coefficient with respect to $t$ on the left hand side is less than or equal to the corresponding coefficient on the right hand side.  The double sum equals
\begin{gather*}
  \sum_{\nu = 0}^\infty \sum_{m = \nu}^\infty t^{12 m - 2 \nu}
=
  \frac{1}{1 - t^{12}} \sum_{\nu = 0}^\infty t^{12 \nu - 2 \nu}
=
  \frac{1}{(1 - t^{10})(1 - t^{12})}
\text{.}
\end{gather*}
This proves the statement, since the generators of $\bigoplus_{2 \isdiv k} \rmM^{(2)}_k$ have weight $4, 6, 10$, and~$12$.
\end{proof}

\begin{lemma}
\label{la:non-vanishing-forms-of-half-integral-weight}
There is a vector valued Siegel modular form of
\begin{enumerate}[(i)]
\item odd weight
\item half integral weight $k \not \in \ZZ$
\end{enumerate}
 whose components have no common zero.
\end{lemma}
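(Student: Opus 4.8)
The plan is to build the required forms explicitly out of classical objects, treating the two cases separately but in parallel. Recall that we need a vector valued Siegel modular form of degree~$2$ whose component functions have no common zero on $\HS^{(2)}$ — in case~(i) of odd weight, and in case~(ii) of half integral weight $k \notin \ZZ$. The natural source of such ``everywhere non-degenerate'' vector valued forms is the collection of theta series attached to even unimodular (or to suitable) lattices, together with the classical fact that theta series associated to a positive definite lattice do not vanish simultaneously, since at any point $Z \in \HS^{(2)}$ the value of at least one theta component is a convergent series with a dominant constant term. So the first step is to fix a positive definite lattice $L$ of the appropriate rank, form its theta series $\theta_L(Z) = \sum_{v \in L^2} \exp(\pi i\, \tr(v^\T v\, Z))$, and more generally its vector valued refinement $\theta_{L + \mu}$ indexed by cosets $\mu \in L^\#/L$; these assemble into a vector valued Siegel modular form of weight $\det^{\operatorname{rk}(L)/2}$ and type the Weil representation of $\tSp{2}(\ZZ)$ on $\CC[(L^\#/L)^2]$. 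Choosing $\operatorname{rk}(L)$ odd gives half integral weight, hence case~(ii); the components $\theta_{L+\mu}$ have no common zero because, evaluating at $Z = i t\, I_2$ with $t \to \infty$, the coset $\mu = 0$ component tends to $1$ while all others stay bounded, so in particular $\theta_L$ alone is non-vanishing in a neighbourhood, and by modularity the common zero locus (if nonempty) would be a $\Sp{2}(\ZZ)$-invariant analytic subset not meeting that neighbourhood — a contradiction once one checks the zero set is all of $\HS^{(2)}$ or empty, which follows from the constant-term argument applied at every cusp.

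For case~(i), odd weight, I would take the above half integral construction and tensor (or multiply component-wise) with one more scalar factor to shift the weight into $\ZZ$ while keeping it odd: concretely, tensor $\theta_L$ (with $\operatorname{rk}(L)$ odd) with a nonzero scalar-valued Siegel modular form of weight $\det^{1/2}$ — such a form exists as a further theta series attached to a rank~$1$ lattice, or one may instead simply pick $L$ of rank $\equiv 2 \pmod 4$ so that $\operatorname{rk}(L)/2$ is already an odd integer. The tensor/product of two forms with non-vanishing component systems again has non-vanishing component system: at any $Z$, pick a component of the first factor that is nonzero there and a component of the second that is nonzero there, and their product is a nonzero component of the tensor. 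This reduces both parts of the lemma to the single assertion that the theta component system of a positive definite lattice has empty common zero locus.

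The step I expect to be the main obstacle is that assertion itself — verifying that the $\theta_{L+\mu}$ really have \emph{no} common zero on all of $\HS^{(2)}$, not merely near the cusp $i\infty$. The constant-term / dominant-term estimate shows non-vanishing of $\theta_L$ in a Siegel domain $Y > c\, I_2$; to propagate this to the whole of $\HS^{(2)}$ one wants an argument that the common zero set is either empty or everything, or else a direct positivity estimate. One clean route: the common zero set of the $\theta_{L+\mu}$ is $\Sp{2}(\ZZ)$-invariant and closed, so its image in $X_{\Sp{2}(\ZZ)}$ is closed; if it were nonempty it would be a proper analytic subset, yet the Fourier--Jacobi expansion of $\theta_L$ has leading Jacobi coefficient a nonzero theta series on $\HS^\rmJ$, whose own non-vanishing near its cusp we already control, and one iterates this descent through the boundary strata of the Satake (or toroidal) compactification — the non-vanishing at every $0$-dimensional cusp of the minimal compactification then forces the zero set to avoid a Zariski-dense open set, hence to be empty since it is closed and analytic. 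Making this descent precise, and choosing $L$ so that the relevant Weil representation is honestly defined on $\tSp{2}(\ZZ)$ with finite-index kernel (which is automatic, $L^\#/L$ being finite), is the technical heart; everything else is bookkeeping about weights modulo $2$ and modulo $\tfrac12\ZZ$.
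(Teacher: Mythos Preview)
Your approach via vector-valued theta series is different from the paper's and could in principle be made to work, but the argument as written has a genuine gap at exactly the point you flag as the main obstacle: the claim that the components $\theta_{L+\mu}$ have no common zero on~$\HS^{(2)}$. Neither of your two attempts establishes this. The first (``the zero set is all of $\HS^{(2)}$ or empty'') is simply false: the common zero locus of a finite collection of modular forms is a $\Sp{2}(\ZZ)$-invariant analytic set, but such a set can perfectly well be a proper nonempty divisor --- the zero locus of any single nonzero cusp form is an example. Your boundary-descent sketch fails for the same reason: knowing that the constant terms are nonzero at every cusp only tells you the common zero locus avoids a neighbourhood of the boundary in the Satake compactification, and a proper closed analytic subset can avoid any prescribed open set while remaining nonempty. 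There is no ``closed and analytic implies empty or everything'' principle to invoke.

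For comparison, the paper bypasses this issue entirely. It fixes a single nonzero scalar form~$\Phi$ of the required weight on a congruence subgroup (a theta constant), chooses finitely many rational symmetric matrices~$\alpha_i$ so that the translates $\Phi(Z+\alpha_i)$ have empty common zero set, and then induces each translate to a vector-valued form for~$\tSp{2}(\ZZ)$; the direct sum of these induced forms has the desired property. If you prefer to keep your theta approach for case~(ii), you should replace your non-vanishing argument by a citation: the ten even theta constants of degree~$2$ have no common zero on~$\HS^{(2)}$, a classical fact underlying Igusa's projective embedding of the Siegel modular threefold. For case~(i), note that a \emph{scalar-valued} form of weight~$\tfrac12$ for the full group does not exist, so your first tensoring suggestion does not work as stated; taking~$L$ of rank~$2$ is the right move, but then you again need a non-vanishing statement for those particular theta components, which is not the same as the classical Igusa result and would need its own justification.
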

\begin{proof}
We prove the second statement only.  The first follows along the same lines.

Fix some Siegel modular form~$\Phi$ of half integral weight~$k$ for some finite index subgroup of $\tSp{2}(\ZZ)$.  For example, any of the theta constants works~\cite{Fr91}.  There is $0 < j \in \ZZ$, and a set $\{ \alpha_i \}_{1 \le i \le j} \subset \QQ$ such that the shifts $\Phi(Z + \alpha_i)$, $1 \le i \le j$ of $\Phi$ have no zero in common.  Each such shift is a Siegel modular form for some subgroup $\Gamma_i \subseteq \tSp{2}(\ZZ)$.  We obtain vector valued Siegel modular forms
\begin{gather*}
  \Phi_i(Z)
=
  \sum_{\gamma \in \Gamma_i \backslash \tSp{2}(\ZZ)}
    \Phi(Z + \alpha_i) \big|^{(2)}_k\, \gamma\;
    \frake_\gamma
\end{gather*}
for the induced representations $\rho_i = {\rm Ind}_{\Gamma_i}^{\Sp{2}(\ZZ)}(\mathbbm{1})$ with representation space $V_{\rho_i} = \lspan\big( \frake_\gamma \,:\, \gamma \in \Gamma_i \backslash \Sp{2}(\ZZ) \big)$.  Then $\big( \Phi_i \big)_{1 \le i \le j}$ is a vector valued Siegel modular form of type $\bigoplus_{1 \le i \le j} \rho_i$ with the desired property.
\end{proof}

\begin{theorem}
\label{thm:formal-fourier-jacobi-series-equal-siegel-modular-forms}
For every $k \in \frac{1}{2} \ZZ$, $0 \le l \in \ZZ$, and every unitary representation~$\rho$ of $\tSp{2}(\ZZ)$ (whose kernel has finite index), we have $\rmF\rmM^{(2)}_{k, l}(\rho) = \rmM^{(2)}_{k, l}(\rho)$.
\end{theorem}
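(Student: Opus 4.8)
The plan is to establish the two inclusions. One direction, $\rmM^{(2)}_{k,l}(\rho)\subseteq\rmF\rmM^{(2)}_{k,l}(\rho)$, is Proposition~\ref{prop:fourier-jacobi-expansion-for-siegel-modular-forms} together with the involution relation forced by $\rot(S)\in\Sp{2}(\ZZ)$, so all the work goes into the reverse inclusion. I would first reduce to even integral weight. Granting the theorem for $k\in2\ZZ$ (and all $l\ge0$ and all $\rho$), take $\Phi\in\rmF\rmM^{(2)}_{k,l}(\rho)$ with $k$ odd or in $\frac{1}{2}\ZZ\setminus\ZZ$ and, using Lemma~\ref{la:non-vanishing-forms-of-half-integral-weight} (combining its two forms if necessary), pick a Siegel modular form $\Psi$ of scalar symmetric-power weight $k'$ and type $\rho'$, with $k+k'$ an even integer, whose components have no common zero on $\HS^{(2)}$. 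Since the involution relation is multiplicative, $\Psi\Phi$ is again a formal Fourier Jacobi expansion, of even weight $(k+k',l)$ and type $\rho\otimes\rho'$, hence (by the even case) a holomorphic Siegel modular form. Its components are $\Phi_\mu\Psi_\nu$; dividing by the $\Psi_\nu$ on the open sets where they do not vanish and using the no-common-zero property, one recovers each $\Phi_\mu$ as a single global holomorphic function on $\HS^{(2)}$ — the local pieces glue because $\Psi_\nu\Phi_\mu\Psi_{\nu'}=\Psi_{\nu'}\Phi_\mu\Psi_\nu$ holds as honest holomorphic functions, their formal Fourier Jacobi expansions agreeing — and modularity of $\Psi$ and $\Psi\Phi$ then gives $\Phi\in\rmM^{(2)}_{k,l}(\rho)$.

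For the even case I would fix $k\in2\ZZ$, $l\ge0$, a unitary $\rho$ of $\tSp{2}(\ZZ)$ with finite-index kernel, and a nonzero $\Phi\in\rmF\rmM^{(2)}_{k,l}(\rho)$. For even $k'$ the map $\rmM^{(2)}_{k'}\to\rmF\rmM^{(2)}_{k+k',l}(\rho)$, $g\mapsto g\Phi$, is well defined (multiplicativity of the involution relation) and injective: if $g\Phi=0$ with $g\ne0$, comparing the lowest powers of $q'$ occurring on the two sides produces a product of two nonzero Jacobi forms, one of them scalar, which is nonzero because it is a product of nonzero holomorphic functions on the connected space $\HS^\rmJ$; thus the ambient space of formal Fourier Jacobi expansions is an integral domain. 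Hence the image $U_{k'}$ has dimension $\dim\rmM^{(2)}_{k'}$, which grows like a positive constant times $k'^3$, the ring $\bigoplus_{k\in2\ZZ}\rmM^{(2)}_k$ being polynomial on generators of weights $4,6,10,12$. Inside the same ambient space, $W_{k'}:=\rmM^{(2)}_{k+k',l}(\rho)$ is a subspace by Proposition~\ref{prop:fourier-jacobi-expansion-for-siegel-modular-forms}, and Theorem~\ref{thm:generating-series-for-formal-fourier-jacobi} gives $\dim\rmF\rmM^{(2)}_{k+k',l}(\rho)\le\dim W_{k'}+O(k'^2)$ as $k'\to\infty$. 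So $\dim U_{k'}+\dim W_{k'}>\dim\rmF\rmM^{(2)}_{k+k',l}(\rho)$ for all large even $k'$, whence $U_{k'}\cap W_{k'}\ne\{0\}$: there is $g\in\rmM^{(2)}_{k'}\setminus\{0\}$ with $F:=g\Phi\in\rmM^{(2)}_{k+k',l}(\rho)$ a nonzero holomorphic Siegel modular form.

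Finally I would pass from the identity $g\Phi=F$ of formal series to holomorphy of $\Phi$. The quotient $\widetilde\Phi:=F/g$ is a meromorphic Siegel modular form of weight $(k,l)$ and type $\rho$ (meromorphic since $g\not\equiv0$, modular since $F$ and $g$ are), and the relation $g\Phi=F$ identifies $\Phi$ with its formal Fourier Jacobi expansion, whose coefficients $\phi_m$ are all honest Jacobi forms. The regularity theorem announced in the introduction, Theorem~\ref{thm:siegel-regularity}, is precisely what upgrades such a meromorphic degree-$2$ form with holomorphic Fourier Jacobi coefficients to a holomorphic one; applying it yields $\widetilde\Phi\in\rmM^{(2)}_{k,l}(\rho)$ with Fourier Jacobi expansion $\Phi$, i.e. $\Phi\in\rmM^{(2)}_{k,l}(\rho)$. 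This finishes the even case and, via the reduction above, the theorem. The genuinely hard ingredient is this last analytic step — ruling out that a pole of $F/g$ along the zero divisor of $g$ could go undetected by the $q'$-expansion — which is exactly why it is isolated as Theorem~\ref{thm:siegel-regularity}; everything else here is a dimension count plus routine bookkeeping with Fourier coefficients and the reduction lemmas.
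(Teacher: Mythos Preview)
Your proposal is correct and follows essentially the same line as the paper's proof: reduce to even integral weight via Lemma~\ref{la:non-vanishing-forms-of-half-integral-weight}, then in the even case multiply a given $\Phi$ by scalar Siegel modular forms of large weight, use a dimension comparison to force a nontrivial intersection with genuine Siegel modular forms, and finish with the regularity result Theorem~\ref{thm:siegel-regularity}. Your dimension argument invokes Theorem~\ref{thm:generating-series-for-formal-fourier-jacobi} directly (so that $\dim U_{k'}+\dim W_{k'}-\dim\rmF\rmM^{(2)}_{k+k',l}(\rho)\ge\dim\rmM^{(2)}_{k'}-O(k'^2)\to\infty$), whereas the paper phrases the same step through Corollary~\ref{cor:dimension-estimates-for-formal-fourier-jacobi}; and you spell out the injectivity of $g\mapsto g\Phi$ explicitly, which the paper leaves implicit. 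These are cosmetic differences only.
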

\begin{proof}
We first prove the case of $k \in 2 \ZZ$.  By the remark after Definition~\ref{def:siegel-modular-forms}, $\rho$ factors through $\Sp{2}(\ZZ)$.  Suppose that
\begin{gather*}
  \Phi
=
  \sum_{0 \le m \in \ZZ} \phi_m \, q^{\prime\, m}
\in
  \rmF\rmM^{(2)}_{k, l}(\rho)
\text{.}
\end{gather*}
Choose $k_0$ as in Corollary~\ref{cor:dimension-estimates-for-formal-fourier-jacobi}, and consider the space
\begin{gather*}
  \rmM^{(2)}_{k_0} \cdot \Phi
\subseteq
  \rmF\rmM^{(2)}_{k + k_0, l}(\rho)
\text{.}
\end{gather*}
By choice of $k_0$, the intersection of $\rmM^{(2)}_{k_0} \cdot \Phi$ and $\rmM^{(2)}_{k + k_0, l}(\rho)$ is not empty.  Pick some $\Psi \in \rmM^{(2)}_{k_0}$ such that $\Psi \Phi \in \rmM^{(2)}_{k + k_0, l}(\rho)$.  Next, apply Theorem~\ref{thm:basic_regularity} to find that $\Phi \in \rmM_{k, l}(\rho)$.

Now, suppose that $k \in \frac{1}{2}\ZZ$.  Choose a Siegel modular form $\Psi$ as in Lemma~\ref{la:non-vanishing-forms-of-half-integral-weight} such that $\Phi \otimes \Psi$ has even weight.  By the above, it is a Siegel modular form.  Write $\Psi_i$ for the components of $\Psi$.  The meromorphic Siegel modular form $(\Phi \Psi_i) \slashdiv \Psi_i$ is independent of~$i$, since $(\Phi \Psi_i) \Psi_{i'} = (\Phi \Psi_{i'}) \Psi_{i}$ for all~$i, i'$.  By choice, the components of $\Psi$ have no common zero.  Therefore, $(\Phi \Psi_i) \slashdiv \Psi_i$ is holomorphic and posses a Fourier Jacobi expansion.  This expansions equals $\Phi$, because multiplication by $\Psi_i$ is injective on formal Fourier Jacobi expansions.
\end{proof}

\section{Regularity for meromorphic Siegel modular forms}
\label{sec:regularity}

In this section we prove regularity of meromorphic Siegel modular forms which, in a sense made precise below, have a holomorphic Fourier-Jacobi expansion.  Note that all considerations are independent of Section~\ref{sec:formal-fj-expansions}, where we use Theorem~\ref{thm:siegel-regularity} to establish Theorem~\ref{thm:formal-fourier-jacobi-series-equal-siegel-modular-forms}.

\begin{theorem}
\label{thm:siegel-regularity}
Let $\Phi = \sum_{0 \le m \in \QQ} \phi\, q^{\prime\, m} \in \rmF\rmM^{(2)}_{k, l}(\rho)$ and $\Psi = \sum_{0 \le m \in \QQ} \psi\, q^{\prime\, m} \in \rmM^{(2)}_{k'}$.  Suppose that $\Phi \Psi \in \rmM^{(2)}_{k + k', l}(\rho)$.  Then $\sum_{0 \le m \in \QQ} \phi\, q^{\prime\, m}$ is convergent and $\Phi \in \rmM^{(2)}_k$.
\end{theorem}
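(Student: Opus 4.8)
The plan is to introduce the a priori only meromorphic Siegel modular form $\Phi^{\ast}:=(\Phi\Psi)/\Psi$ of weight $\det^{k}\otimes\sigma_{l}$ and type $\rho$ — well defined since $\Psi\neq 0$ — and to show it is holomorphic; then $\Phi=\Phi^{\ast}$, with convergent Fourier Jacobi expansion. For the bookkeeping, write $G:=\Phi\Psi=\sum_{m}g_{m}\,q^{\prime\,m}\in\rmM^{(2)}_{k+k',l}(\rho)$; its Fourier Jacobi expansion converges absolutely by Proposition~\ref{prop:fourier-jacobi-expansion-for-siegel-modular-forms}, and $g_{m}=\sum_{a+b=m}\phi_{a}\psi_{b}$ by definition of the formal product. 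Put $m_{0}:=\ord\Psi$, so $\psi_{m_{0}}\in\rmJ_{k',m_{0}}\setminus\{0\}$; the Cauchy product identities then force $g_{m}=0$ for $m<m_{0}$ and $g_{m_{0}}=\phi_{0}\psi_{m_{0}}$. Hence $\widetilde{\Psi}:=e^{-2\pi i m_{0}\tau'}\Psi$ and $\widetilde{G}:=e^{-2\pi i m_{0}\tau'}G$ are holomorphic on $\HS^{(2)}$, extend holomorphically across the boundary divisor $B=\{q'=0\}\cong\HS^{\rmJ}$ of the partial compactification $\cX$ of $\HS^{(2)}$ towards the one‑dimensional cusp, and satisfy $\widetilde{\Psi}|_{B}=\psi_{m_{0}}\not\equiv 0$ and $\widetilde{G}|_{B}=g_{m_{0}}$. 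Since $\Phi^{\ast}=G/\Psi=\widetilde{G}/\widetilde{\Psi}$, the polar divisor $P$ of $\Phi^{\ast}$ is $\Sp{2}(\ZZ)$‑invariant, is contained in $\{\widetilde{\Psi}=0\}$, and contains no component of $B$.

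The core step is holomorphy of $\Phi^{\ast}$ in a neighbourhood of $B$ in $\cX$. Where $\psi_{m_{0}}(\tau,z)\neq 0$ and $\Im\tau'$ is large, $\widetilde{\Psi}\neq 0$, so $\Phi^{\ast}$ is holomorphic there and, by uniqueness of $q'$‑expansions, equals the formal series $\sum_{m}\phi_{m}\,q^{\prime\,m}$; in particular that series converges off $\{\psi_{m_{0}}=0\}$. To cross $\{\psi_{m_{0}}=0\}$ I would fix $\tau_{0}\in\HS^{(1)}$ outside a thin set — so that no point $(\tau_{0},z_{0})$ is a common zero of all the $\psi_{m_{0}+j}(\tau_{0},\cdot)$ — and restrict everything to the two‑variable slice $\{\tau=\tau_{0}\}$, on which $\widetilde{G}(\tau_{0},\cdot,\cdot)$ and $\widetilde{\Psi}(\tau_{0},\cdot,\cdot)$ are holomorphic in $(z,q')$, the latter not identically zero on $\{q'=0\}$, and $\Phi^{\ast}$ has formal $q'$‑expansion $\sum_{m}\phi_{m}(\tau_{0},z)\,q^{\prime\,m}$ with coefficients holomorphic in $z$. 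This is exactly the situation of the two‑variable regularity lemma underlying the theorem: if $g,h$ are holomorphic near $\{0\}\times\Delta_{z}\subset\CC_{q'}\times\CC_{z}$ with $h|_{\{0\}\times\Delta_{z}}\not\equiv 0$, and the meromorphic function $g/h$ has a formal Taylor expansion in $q'$ with coefficients holomorphic on $\Delta_{z}$, then $g/h$ is holomorphic near $\{0\}\times\Delta_{z}$. Its proof is a Weierstrass preparation and division of $h$ at the discrete zero set of $h|_{q'=0}$, after which comparing the $z$‑degree of the division remainder with that of the monic Weierstrass polynomial forces the remainder to vanish. Applying the lemma on generic $\tau_{0}$‑slices, together with the symmetric statement obtained from invariance of $\Phi^{\ast}$ under $\rot\left(\begin{smallmatrix}0&1\\1&0\end{smallmatrix}\right)$ (which interchanges $\tau$ and $\tau'$) on generic $\tau'_{0}$‑slices, exhibits $\Phi^{\ast}$ as holomorphic near $B$ away from an analytic set of codimension at least $2$; the second Riemann extension theorem then gives holomorphy of $\Phi^{\ast}$ on a full neighbourhood $\frakN$ of $B$.

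It remains to globalize. Since $P$ is disjoint from $\frakN$ and $\Sp{2}(\ZZ)$‑invariant, and since suitable modular transformations — for instance $\rot\left(\begin{smallmatrix}1&0\\k&1\end{smallmatrix}\right)$, which sends $\left(\begin{smallmatrix}\tau&z\\z&\tau'\end{smallmatrix}\right)$ to a point whose lower right imaginary part equals $k^{2}\Im\tau+2k\Im z+\Im\tau'\to\infty$ — move every point of $\HS^{(2)}$ into $\frakN$ (using the reduction‑theoretic fact that $\Sp{2}(\ZZ)$‑translates of a cusp neighbourhood cover $\HS^{(2)}$), invariance forces $P=\emptyset$, so $\Phi^{\ast}\in\rmM^{(2)}_{k,l}(\rho)$. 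Its now convergent Fourier Jacobi expansion $\sum_{m}\phi^{\ast}_{m}\,q^{\prime\,m}$ satisfies $\Phi^{\ast}\Psi=G$, hence $\sum_{a+b=m}\phi^{\ast}_{a}\psi_{b}=g_{m}=\sum_{a+b=m}\phi_{a}\psi_{b}$ for all $m$; inducting on $m$ and using that $\psi_{m_{0}}$ is not a zero divisor in the ring of holomorphic functions on $\HS^{\rmJ}$ yields $\phi^{\ast}_{m}=\phi_{m}$ for every $m$. Therefore $\sum_{m}\phi_{m}\,q^{\prime\,m}$ converges and $\Phi=\Phi^{\ast}\in\rmM^{(2)}_{k,l}(\rho)$.

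I expect the main obstacle to be the two‑variable regularity lemma itself, together with the care needed to patch its conclusion across $\{\psi_{m_{0}}=0\}$ into an honest neighbourhood of the cusp that is large enough for the final modular‑transformation argument; once holomorphy near $B$ is in hand, the remaining steps are routine given the convergent Fourier Jacobi expansions of $G$ and $\Psi$.
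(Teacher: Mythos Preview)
Your overall strategy---define the meromorphic $\Phi^{\ast}=(\Phi\Psi)/\Psi$, show it is holomorphic across the boundary $B=\{q'=0\}$ via a two-variable regularity lemma, then globalize and match Fourier--Jacobi coefficients---matches the paper's. For the local step you take a genuinely different and more elementary route: you Weierstrass-prepare $\widetilde{\Psi}$ in the $z$-variable at an isolated zero of $\psi_{m_0}(\tau_0,\cdot)$, divide $\widetilde{G}$, and kill the remainder inductively using that a polynomial in $z$ of degree $<d$ divisible by $(z-z_0)^d$ must vanish. The paper instead passes to a Hironaka resolution so that $\div(\Psi^{\ast})$ becomes normal crossing (hence unbranched over $E$) and then applies its Theorem~\ref{thm:basic_regularity}, whose proof is an explicit partial-fraction computation carried out in Lemmas~\ref{la:basic_regularity:base_case}--\ref{la:basis_regularity:vanishing}. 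Your argument avoids both the resolution step and those computations; the paper's route yields a clean standalone statement about meromorphic $f(x,y)$.

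There is, however, a real gap in your globalization. The element $\rot\left(\begin{smallmatrix}1&0\\k&1\end{smallmatrix}\right)$ you invoke is exactly the Jacobi-group element $(I_2,(k,0),0)$ under the embedding of Section~\ref{sec:jacobi-forms}; since the regular locus of $\Phi^{\ast}$ is already $\Gamma^{\rmJ}$-invariant, applying it tells you nothing new. Concretely, although $\Im\tau'$ grows like $k^{2}\Im\tau$, so does $(\Im z)^{2}/\Im\tau$, and the ratio of $|q'|$ to the width of your $\cX$ at the translated point equals $e^{-2\pi(\Im\tau'_0-(\Im z_0)^{2}/\Im\tau_0)}$, independent of $k$. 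More broadly, the ``reduction-theoretic fact'' that $\Sp{2}(\ZZ)$-translates of an arbitrary open neighborhood of $B$ cover $\HS^{(2)}$ is false (already for $g=1$, horoballs do not cover $\HS^{(1)}$); what one actually needs is holomorphy on a set of the form $\{\Im\tau'>c\}$ for a fixed $c$, which does not follow from pointwise holomorphy along $B$ without further uniform control. The paper handles this step by invoking the Koecher principle after establishing regularity on the Satake boundary. A smaller issue: your patching from generic $\tau_0$-slices to a full neighborhood of $B$ via the second Riemann extension theorem is not correct as stated, since the excluded $\tau_0$ give a codimension-$1$ locus in $\cX$, not codimension $2$; and ``$\tau'_0$-slices'' make no sense near $B$, where $\tau'\to i\infty$. (Preparing also in the $\tau$-variable would push the bad set to codimension $2$ and repair this part.)
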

\begin{proof}
We show that $\Phi$ has no singularity on the boundary of the Satake compactification.  By the Koecher principle, this implies the statement.

Write $D = \{ x \in \CC :\, |x| < 1 \}$ and $\dot{D} = D \setminus \{0\}$.  We start be reducing our considerations to a map $\Phi^* :\, D^3 \rightarrow \CC\bbP^1$.  To achieve this, we use a special case of the approach taken in~\cite{Br04}.  Let $\Gamma \subset \Sp{2}(\ZZ)$ be a torsion free subgroup such that the restriction of $\rho$ to~$\Gamma$ is trivial.  Set $X_\Gamma = \Gamma \backslash \HS^{(2)}$, and define $Y_\Gamma$ as the Satake compaktification of~$X_\Gamma$.  Employing Hironaka's theory to the boundary $\partial X_\Gamma \subset Y_\Gamma$ and the divisor of~$\Psi$, we find a resolution of singularities ${\widetilde Y}_\Gamma \rightarrow Y_\Gamma$.  For all $(\tau_0, z_0) \in \HS^\rmJ$, this yields a holomorphic map $D^2 \times \dot{D} \rightarrow \HS^{(2)}$, whose image contains all $Z = \left(\begin{smallmatrix}\tau_0 & z_0 \\ z_0 & \tau'\end{smallmatrix}\right)$ with $\Im \tau' > y'_0$ for some $y'_0 \in \RR$.  The components correspond to $(\tau, z)$ and $q'$.  After possibly shrinking the image and preimage, and after composing with a suitable biholomorphic map on $D^2 \times \dot{D}$, we can assume that the pushforward of the derivative with respect to the canonical coordinate on~$\dot{D} \subset D^2 \times \dot{D}$ is mapped to $\partial_{q'}$ on $\HS^{(2)}$.  By abuse of notation we write $q'$ for the corresponding variable.  Denote by~$E$ the divisor $\{q' = 0\} \subset D^2 \times D$, which is the preimage of $\partial X_\Gamma \subset Y_\Gamma$.

Since $\Phi \Psi$ is a Siegel modular form, it extends from $X_\Gamma$ to $Y_\Gamma$, and so does~$\Psi$.  Therefore the pullback $\Psi^* : D^2 \times \dot{D} \rightarrow \CC\bbP^1$ extends to $D^2 \times D$.  Its divisor~, $\div(\Psi^*)$, is normal crossing.  It decomposes as $\div_1(\Psi) + \div_2(\Psi)$, where $\div_1(\Psi)$ is supported on $E$, and $\div_2(\Psi) + E$ is normal crossing.  Write $E^\rmJ$ for the intersection $|\div_2(\Psi)| \cap E$, where we mean by $|\div_2(\Psi)|$ the support of~$\div_2(\Psi)$.  By the above, $E^\rmJ$ is smooth and has codimension~$1$, if it is not empty.

Fix $(\tau_0, z_0) \in \HS^\rmJ \setminus E^\rmJ$.  There is a neighborhood $U^\rmJ \subseteq \HS^\rmJ$ of $(\tau_0, z_0)$ and $y'_0 \in \RR$ such that $\Psi(Z) \ne 0$ for all $(\tau, z) \in U^\rmJ$ and $\Im(\tau') > \Im(\tau'_0)$.  Therefore we have
\begin{gather*}
  \partial_{q'}^m\big|_{q' = 0} \big( (\Phi \Psi)^* \slashdiv \Psi^* \big)
=
  (\phi_{m})^*
\end{gather*}
as a convergent series.  By the assumptions, $(\phi_m)^*$ extends to $\HS^\rmJ$.  By applying to the above equation linear functionals on the pullback of $V_l \otimes V_\rho$, by then restricting to suitable smooth subvarieties of $\HS^\rmJ$, and by finally using Theorem~\ref{thm:basic_regularity}, we establish the statement.
\end{proof}

The remainder of this section is of purely function theoretic nature.  The proof of Theorem~\ref{thm:basic_regularity} builds up on Lemma~\ref{la:basic_regularity:general_case} and~\ref{la:basis_regularity:vanishing}, which summarize slightly involved computations with polynomials whose coefficients are meromorphic functions.

In the proofs of the next theorem and lemmas, we will repeatedly use the Pochhammer symbol
\begin{gather}
\label{eq:def:pochhammer-symbol}
  (a)_n
=
  \prod_{i = 0}^{n - 1} (a - i)
\text{,}
\end{gather}
which is defined for $a \in \CC$ and $0 \le n \in \ZZ$.

\begin{theorem}
\label{thm:basic_regularity}
Let $f(x, y)$ be a function that is meromorphic in a neighborhood~$U$ of $x = y = 0$ in $\CC^2$.  Assume the following:
\begin{enumerate}[(i)]
\item $\{ (x, y) \in U \,:\, y = 0 \}$ is not a polar divisor of~$f$.
\item No irreducible polar divisor of $f$ has a branching point at $x = y = 0$.
\item For all $0 \le n \in \ZZ$, the function $\partial_y^n\big|_{y = 0}\, f(x, y)$ admits a holomorphic continuation to a neighborhood of~$x = 0$.
\end{enumerate}
Then $f$ is holomorphic in $x = y = 0$.
\end{theorem}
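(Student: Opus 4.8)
The plan is to argue by contraposition: assuming $f$ is \emph{not} holomorphic at the origin, I will exhibit an $n$ for which $\partial_y^n\big|_{y=0}f(x,y)$ has a pole at $x=0$, contradicting~(iii). The Pochhammer symbols of~(\ref{eq:def:pochhammer-symbol}) appear because $\partial_y^n$ of a pole of order $l$ produces a pole of order $l+n$ with coefficient a Pochhammer symbol, and the cancellations forced by~(iii) cannot survive as $n\to\infty$.

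First, localize the polar set. The local ring of holomorphic germs at $0\in\CC^2$ is a unique factorization domain, so write $f=g/h$ with $g,h$ coprime holomorphic germs; then $f$ is holomorphic at the origin exactly when $h$ is a unit, so assume $h(0,0)=0$. By~(i) the line $\{y=0\}$ is not a component of $\{h=0\}$, so $y\nmid h$, hence $h(0,y)\not\equiv 0$, and the Weierstrass preparation theorem applies in the variable~$y$: up to a unit, $h(x,y)=\prod_j (y-\psi_j(x))^{m_j}$, the $\psi_j$ being the Newton--Puiseux branches of the polar divisor at the origin. Assumption~(ii) says no such branch ramifies over the $x$-axis at the origin, which forces every Puiseux exponent to be~$1$; thus each $\psi_j$ is a genuine holomorphic function of $x$ near $0$, with $\psi_j(0)=0$, with $\psi_j\not\equiv0$ (again by~(i)), and --- after shrinking $U$ to a small polydisc --- pairwise distinct and vanishing at $x=0$ only. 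Dividing out, we may further assume $g(x,\psi_j(x))\not\equiv 0$ for every~$j$. If there are no branches we are done, so assume there is at least one.

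Next, extract a closed formula for the relevant derivatives. For $0<|x|<\epsilon$ the function $y\mapsto f(x,y)$ is rational with distinct nonzero poles $\psi_j(x)$; Weierstrass division by $\prod_j(y-\psi_j)^{m_j}$ followed by partial fractions over the field of meromorphic germs at $x=0$ gives
\begin{gather*}
  f(x,y)
=
  h_0(x,y)+\sum_j\sum_{l=1}^{m_j}\frac{a_{j,l}(x)}{\big(y-\psi_j(x)\big)^l}
\text{,}
\end{gather*}
where $h_0$ is holomorphic near the origin, each $a_{j,l}$ is meromorphic at $x=0$ with $\ord_0 a_{j,l}$ independent of~$n$, and $a_{j,m_j}\not\equiv 0$. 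Differentiating $n$ times in~$y$, setting $y=0$, and using $\partial_y^n(y-\psi)^{-l}\big|_{y=0}=(-l)_n(-\psi)^{-l-n}$ yields
\begin{gather*}
  \partial_y^n\big|_{y=0}f(x,y)
=
  (\text{holomorphic in $x$ near $0$})
  +
  \sum_j\sum_{l=1}^{m_j}(-l)_n(-1)^{l+n}\,a_{j,l}(x)\,\psi_j(x)^{-l-n}
\text{.}
\end{gather*}
This is the identity recorded by Lemma~\ref{la:basic_regularity:general_case}. By~(iii) the left-hand side extends holomorphically across $x=0$ for every~$n$; since $\psi_j(x)^{-l-n}$ has a pole at $x=0$ of order $(l+n)\ord_0\psi_j\to\infty$ while $\ord_0 a_{j,l}$ stays bounded, this can happen only if the singular contributions cancel to ever higher order as $n\to\infty$.

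Finally, run the cancellation analysis; this is the heart of the proof. Set $d=\max_j\ord_0\psi_j$ and let $S$ be the set of indices attaining it. Comparing orders of vanishing at $x=0$ term by term, one checks that for $n$ large the most singular terms come from a fixed set of pairs $(j,l)$ with $j\in S$, all contributing a pole of the same order $nd+e$; holomorphy then forces the coefficient of $x^{-(nd+e)}$ in the corresponding partial sum to vanish for all large~$n$. Writing $\psi_j(x)=c_jx^{d}(1+\cdots)$ and expanding the $a_{j,l}$ to leading order, that coefficient is an exponential polynomial in~$n$ of the shape $\sum(\text{polynomial in }n)\,c_j^{-n}$; if the relevant $c_j$ were distinct, linear independence of the functions $n\mapsto n^p c^{-n}$ would force the nonzero leading coefficients of the $a_{j,m_j}$ to vanish, a contradiction. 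When some of the $c_j$ coincide, one peels off the next term in the Laurent expansions of the $\psi_j$ and the $a_{j,l}$ and repeats; because the $\psi_j$ are pairwise distinct holomorphic germs they agree to only finite order, so after finitely many steps the surviving branches separate and the forced cancellation becomes impossible. I expect precisely this bookkeeping --- tracking coincidences among the leading Puiseux data of several branches at once, with meromorphic rather than constant coefficients --- to be the main obstacle; it is isolated in Lemma~\ref{la:basis_regularity:vanishing}. Everything before it is routine once~(ii) lets us recognize the polar divisor as a union of holomorphic graphs $y=\psi_j(x)$.
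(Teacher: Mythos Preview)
Your argument is correct and follows the paper's architecture: use (i) and (ii) together with Weierstrass preparation to recognize the polar divisor as a finite union of holomorphic graphs $y=\psi_j(x)$ through the origin, compute $\partial_y^n\big|_{y=0}f$ explicitly, and reduce to the statement that an expression $\sum_j H_j(x)^n R_j(x,n)$ with $H_j=\psi_j^{-1}$ having a genuine pole at $x=0$ and $R_j$ a nonzero polynomial in~$n$ cannot be regular at $x=0$ for all~$n$ --- which is exactly Lemma~\ref{la:basis_regularity:vanishing}. The one substantive difference is your middle step: where the paper expands the product $\prod_j(Q_{j,1}y+Q_{j,0})^{-1-m_j}$ directly and then invokes the combinatorial identities of Lemmas~\ref{la:basic_regularity:base_case} and~\ref{la:basic_regularity:general_case} to reach the form $\sum_j H_j^n R_j$, you instead perform a partial-fraction decomposition in~$y$ over the field of meromorphic germs in~$x$, which delivers that form immediately (with $R_j(x,n)=\sum_{l=1}^{m_j}(-1)^l\binom{l+n-1}{l-1}(l-1)!\,a_{j,l}(x)\psi_j(x)^{-l}$, visibly a polynomial in~$n$ of degree $m_j-1$ whose leading coefficient is a nonzero multiple of $a_{j,m_j}$). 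This is a genuine simplification and bypasses those two lemmas entirely. Your sketch of the endgame as ``iterative peeling of leading Puiseux data'' is not how the paper proves Lemma~\ref{la:basis_regularity:vanishing} --- it uses instead that the Vandermonde matrix $\big((x^{o_H}H_j)^l\big)_{j,l}$ is invertible over meromorphic germs --- but since you correctly identify this as the crux and defer to the lemma, nothing is missing.
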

\begin{proof}
By the assumptions on~$f$ and the theory of meromorphic functions presented, for example, in~\cite{Ni01} (following work of Oka), we express~$f$ in a sufficiently small neighborhood of $x = y = 0$ as follows:
\begin{gather}
\label{eq:thm:basic_regularity}
  f(x, y)
=
  {\td f}(x, y)\,
  \frac{\sum_{i = 0}^{d_P} P_{i}(x) y^i}
       {\prod_{j = 1}^{n_Q} \big(Q_{j, 1}(x) y + Q_{j, 0}(x)\big)^{1 + m_j}}
\text{.}
\end{gather}
We have $0 \le n_Q, d_P, m_j \in \ZZ$ ($1 \le j \le n_Q$).  Functions that appear on the right hand side are holomorphic in some neighborhood of $x = y = 0$ or $x =0$, respectively.  They satisfy ${\td f}(0, 0) \ne 0$, $P_{d_P}(0) \ne 0$, $P_{i}(0) = 0$ ($0 \le i < d_P$), $Q_{j, 1}(0) \ne 0$, $Q_{j, 0}(0) = 0$.  We can assume that for $j \ne j'$, $-Q_{j, 0} \slashdiv Q_{j, 1} \ne -Q_{j', 0} \slashdiv Q_{j', 1}$ as functions in a neighborhood of~$x = 0$.  By dividing both sides by~${\td f}$, we can and will assume without loss of generality that ${\td f}(x, y) = 1$.  Write $P(x, y) = \sum_i P_i(x) y^i$ for the numerator of~\eqref{eq:thm:basic_regularity}.  We may assume that the numerator and denominator in~\eqref{eq:thm:basic_regularity} are coprime, which amounts to the assertion that for all~$1 \le j \le n_Q$, we have $P\big(x,\, -Q_{j,0}(x) \slashdiv Q_{j, 1}(x)\big) \ne 0$ as functions in a neighborhood of $x = 0$.

We will show by contradiction that $n_Q = 0$, that is, $f(x, y) = P(x, y)$ is holomorphic in $x = y = 0$.

For convenience, we suppress the dependence on~$x$.  Furthermore, we use exponents $0 \le e_j \in \ZZ$.  Away from $x = 0$, we have
\begin{align*}
&
  \partial_y^n \big|_{y = 0} f(\,\cdot\,, y)
\\[0.2em]
={}&
  \sum_{i = 0}^{d_P} \binom{n}{i} i! P_i\;
  \partial_y^{n - i}\big|_{y = 0}
  \Big( \prod_{j = 1}^{n_Q} \big(Q_{j, 1} y + Q_{j, 0}\big)^{-1 - m_j} \Big)
\\
={}&
  \sum_{i = 0}^{d_P} \binom{n}{i}i! P_i
   \sum_{\sum_j e_j = n - i} \binom{n - i}{e_1, e_2, \ldots} \prod_{j = 1}^{n_Q} (-1)^{e_j} (m_j + e_j)_{e_j}\, Q_{j, 1}^{e_j} Q_{j,0}^{-1 - m_j - e_j}
\\
={}&
  \Big( \prod_{j = 1}^{n_Q} Q_{j, 0}^{-1 - m_j} \Big)
  \sum_{i = 0}^{d_P} P_i
  \sum_{\sum_j e_j = n - i} \binom{m_j + e_j}{m_j}
  \prod_{j = 1}^{n_Q} (- Q_{j,1} \slashdiv Q_{j, 0})^{e_j}
\text{.}
\end{align*}
Assuming $n_Q \ne 0$, apply Lemma~\ref{la:basic_regularity:general_case} and~\ref{la:basis_regularity:vanishing} to obtain a contradiction.
\end{proof}

\begin{lemma}
\label{la:basic_regularity:base_case}
Let $H_j(x)$ ($1 \le j \le n_H$) be pairwise distinct meromorphic functions in a neighborhood of~$x = 0$.  Then for all $0 < n \in \ZZ$, we have
\begin{gather*}
  \sum_{\sum_{j = 1}^{n_H} e_j = n} \prod_{j = 1}^{n_H} H_j^{e_j}
=
  \sum_{j = 1}^{n_H} H_j^{n + n_H} \prod_{j' \ne j} (H_j - H_{j'})^{-1}
\text{,}
\end{gather*}
where the exponents~$e_j \in \ZZ$ are nonnegative, for all $j$.
\end{lemma}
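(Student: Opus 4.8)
The plan is to recognize the left-hand side as the complete homogeneous symmetric function $h_n(H_1,\dots,H_{n_H})$ of degree~$n$ in the pairwise distinct quantities $H_1,\dots,H_{n_H}$, and to evaluate the right-hand side by partial fractions. I would work in the field of germs of meromorphic functions at $x=0$; since the $H_j$ are pairwise distinct there, every difference $H_j-H_{j'}$ ($j\neq j'$) is invertible, so all the manipulations below are legitimate. First I would record the generating identity
\[
  \sum_{n\ge 0}\Big(\sum_{\sum_j e_j=n}\prod_{j=1}^{n_H}H_j^{e_j}\Big)\,t^n
  =
  \prod_{j=1}^{n_H}\frac{1}{1-H_j t}
\]
as formal power series in an auxiliary variable~$t$, the left-hand coefficients being exactly those in the lemma.

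Because the $H_j$ are distinct, a partial-fraction decomposition gives
\[
  \prod_{j=1}^{n_H}\frac{1}{1-H_j t}
  =\sum_{j=1}^{n_H}\frac{c_j}{1-H_j t},
  \qquad
  c_j=\prod_{j'\neq j}\frac{1}{1-H_{j'}/H_j}
     =H_j^{\,n_H-1}\prod_{j'\neq j}(H_j-H_{j'})^{-1},
\]
and extracting the coefficient of~$t^n$ on both sides then yields
\[
  \sum_{\sum_j e_j=n}\prod_{j=1}^{n_H}H_j^{e_j}
  =\sum_{j=1}^{n_H}c_j\,H_j^{\,n}
  =\sum_{j=1}^{n_H}H_j^{\,n+n_H-1}\prod_{j'\neq j}(H_j-H_{j'})^{-1}.
\]
As a robustness check I would reprove the same statement by induction on~$n_H$, using the recursion $h_n(H_1,\dots,H_{n_H})=h_n(H_1,\dots,H_{n_H-1})+H_{n_H}\,h_{n-1}(H_1,\dots,H_{n_H})$, or equivalently by clearing the Vandermonde denominator and matching the result with the bialternant (Jacobi--Trudi) formula for the Schur function~$s_{(n)}$.

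There is no genuine analytic obstacle; the only delicate point is the exponent in the numerator, and this is precisely where a discrepancy with the displayed statement surfaces. The partial-fraction computation produces numerator exponent $n+n_H-1$, and more generally $\sum_{j}H_j^{\,k}\prod_{j'\neq j}(H_j-H_{j'})^{-1}$ equals $h_{k-n_H+1}(H_1,\dots,H_{n_H})$ for $k\ge n_H-1$ (and vanishes for $0\le k\le n_H-2$). With the exponent $n+n_H$ as printed, the right-hand side would instead compute $h_{n+1}$ rather than $h_n$; for instance, with $n_H=2$ and $n=1$ the left-hand side equals $H_1+H_2$, whereas $\frac{H_1^{3}}{H_1-H_2}+\frac{H_2^{3}}{H_2-H_1}=H_1^{2}+H_1H_2+H_2^{2}$. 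Hence the intended exponent is $n+n_H-1$, and with this (evidently typographical) correction the identity holds exactly as proved above; it is this $n+n_H-1$ version that is actually invoked in the proof of Theorem~\ref{thm:basic_regularity}, so the applications are unaffected.
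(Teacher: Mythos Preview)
Your argument is correct, and your diagnosis of the off-by-one in the exponent is right: the numerator should be $H_j^{\,n+n_H-1}$, as your $n_H=2$, $n=1$ check shows. Your route---recognising the left side as the complete homogeneous symmetric polynomial $h_n(H_1,\dots,H_{n_H})$ and reading off its closed form from the partial-fraction expansion of $\prod_j(1-H_j t)^{-1}$---is the classical one and differs from the paper's. The paper instead proceeds combinatorially: it multiplies the sum by the factors $(H_j-H_{j'})$ one at a time and telescopes via the elementary identity
\[
(H_{j'}-H_{j''})\sum_{\sum_j e_j=n}\prod_j H_j^{e_j}
= H_{j'}\!\!\sum_{\substack{\sum_j e_j=n\\ e_{j''}=0}}\!\prod_j H_j^{e_j}
\;-\; H_{j''}\!\!\sum_{\substack{\sum_j e_j=n\\ e_{j'}=0}}\!\prod_j H_j^{e_j},
\]
each application of which drops one variable from the inner sum; iterating down to a single variable yields $H_j^{\,n+n_H-1}$ (so the paper's own method, carried out, also produces the corrected exponent). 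Your generating-function proof is shorter and more conceptual; the paper's telescoping is a bare-hands verification that sidesteps any concern about some $H_j$ vanishing (harmless anyway, since after clearing the Vandermonde the identity is polynomial). One small correction to your closing remark: the erroneous exponent $n+n_H$ is in fact carried verbatim into the proof of Lemma~\ref{la:basic_regularity:general_case}, but there it only perturbs the explicit shape of the~$R_j$ by a harmless factor of~$H_j$, not the nonvanishing of their leading coefficients in~$n$, so your conclusion that the applications are unaffected stands.
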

\begin{proof}
Compute the product 
\begin{gather*}
  \prod_{j' \ne j} (H_j - H_{j'})
  \sum_{\sum_{j = 1}^{n_H} e_j = n} \prod_{j = 1}^{n_H} H_j^{e_j}
\end{gather*}
by repeatedly applying the formula
\begin{gather*}
  (H_{j'} - H_{j''})
  \sum_{\sum_{j = 1}^{n_H} e_j = n}\hspace{-.7em} H_j^{e_j}
=
  H_{j'}\!\! \sum_{\substack{\sum_j e_j = n \\ e_{j''} = 0}} \prod_{j = 1}^{n_H} H_j^{e_j}
  - H_{j''}\!\! \sum_{\substack{\sum_j e_j = n \\ e_{j'} = 0}} \prod_{j = 1}^{n_H} H_j^{e_j}
\text{.}
\qedhere
\end{gather*}
\end{proof}

\begin{lemma}
\label{la:basic_regularity:general_case}
Let $H_j(x)$ ($1 \le j \le n_H$) be pairwise distinct, nonzero meromorphic functions in a neighborhood of~$0$.  Fix a holomorphic function $P(x, y) = \sum_{i = 0}^{d_P} P_i(x) y^i$, and suppose that $P(x, H_j(x)^{-1}) \ne 0$ as functions of $x$, for all~$1 \le j \le n_H$.  Given nonnegative integers $m_j$, we have
\begin{gather*}
  \sum_{i = 0}^{d_P} P_i \sum_{\sum_{j = 1}^{n_H} e_j = n} \binom{e_j + m_j}{m_j} H_j^{e_j}
=
  \sum_{j = 1}^{n_H} H_j^n R_{j}(H_1, \ldots, H_{n_H}; n)
\end{gather*}
with rational functions $R_{j}$.  Moreover, the $R_j(H_1(x), \ldots, H_{n_H}(x); n)$ ($1 \le j \le n_H$) are nonzero polynomials in~$n$ with coefficients that are meromorphic functions in~$x$.
\end{lemma}
\begin{proof}
Write $\partial_j$ for the formal derivative with respect to~$H_j$.  Using Lemma~\ref{la:basic_regularity:base_case}, we find
\begin{align*}
  \sum_{\sum_{j = 1}^{n_H} e_j = n} \binom{e_j + m_j}{m_j} H_j^{e_j}
&=
  \Big( \prod_{j = 1}^{n_H} \frac{\partial_{H_j}^{m_j}}{m_j!} \Big)
  \Big(
  \sum_{\sum_{j = 1}^{n_H} e_j = n} \prod_{j = 1}^{n_H} H_j^{e_j + m_j}
  \Big)
\\
&=
  \Big( \prod_{j = 1}^{n_H} \frac{\partial_{H_j}^{m_j}}{m_j!} \Big)
  \Big(
  \big( \prod_{j = 1}^{n_H} H_j^{m_j} \big)
  \sum_{j = 1}^{n_H} H_j^{n + n_H} \prod_{j' \ne j} (H_j - H_{j'})^{-1}
  \Big)
\text{.}
\end{align*}
We plug this into the left hand side of the formula in the statement.  Note that the sum over~$i$ and the formal derivatives commute, since $P_i$, as a variable, is independent of $H_j$.
\begin{multline*}
  \Big( \prod_{j = 1}^{n_H} \frac{\partial_{H_j}^{m_j}}{m_j!} \Big)
  \Big(
  \sum_{j = 1}^{n_H}
  H_j^n\;
  \frac{P(H_j^{-1}) H_j^{n_H} \prod_{j'} H_{j'}^{m_{j'}}}
       {\prod_{j' \ne j} (H_j - H_{j'})}
  \Big)
\\
=
  \sum_{j = 1}^{n_H}
  H_j^n
  \sum_{l = 0}^{m_j}
  (n)_{m_j - l}
  H_j^{l - m_j}\;
  \frac{\partial_j^l}{m_j!} \Big(
  P(H_j^{-1}) H_j^{n_H + m_j}
  \Big( \prod_{j' \ne j} \frac{\partial_{j'}^{m_{j'}}}{m_{j'}!} 
        \frac{H_{j'}^{m_{j'}}}{(H_j - H_{j'})} \Big) \Big)
\text{.}
\end{multline*}
This proves the first assertion of the lemma.

To prove that the~$R_j$ do not vanish as polynomials in~$n$, we start by computing
\begin{align*}
&
  \frac{\partial_{j'}^{m_{j'}}}{m_{j'}!}
  \frac{H_{j'}^{m_{j'}}}{(H_j - H_{j'})}
=
  \frac{1}{m_{j'}!}
  \sum_{l = 0}^{m_{j'}} \binom{m_{j'}}{l}
  \frac{l! (m_{j'})_{m_{j'} - l} H_{j'}^l }{(H_j - H_{j'})^{1 + l}}
\\
={}&
  \frac{1}{(H_j - H_{j'})^{1 + m_{j'}}}
  \sum_{l = 0}^{m_{j'}} \binom{m_{j'}}{l}
  H_{j'}^l (H_j - H_{j'})^{m_{j'} - l}
=
  \frac{H_j^{m_{j'}}}{(H_j - H_{j'})^{1 + m_{j'}}}
\text{.}
\end{align*}
Therefore, we have
\begin{gather*}
  R_j(H_1, \ldots, H_{n_H}; n)
=
  n^{m_j} P(H_j^{-1}) H_j^{n_H} \prod_{j' \ne j} \frac{H_j^{m_{j'}}}{(H_j - H_{j'})^{1 + m_{j'}}}
  + O(n^{m_j - 1})
\text{,}
\end{gather*}
where $R_j$ is a polynomial in~$n$.  It suffices to notice that the leading coefficient, the coefficient of $n^{m_j}$, is nonzero as a function of $x$.  This proves the second part.
\end{proof}

\begin{lemma}
\label{la:basis_regularity:vanishing}
Let $H_j(x)$ ($1 \le j \le n_H$) be pairwise distinct, nonzero meromorphic functions in a neighborhood of $x = 0$ whose pole order at $x = 0$ is at least~$1$.  Further, let $R_j(x, n)$ ($1 \le j \le n_H$) be polynomials in $n$ whose coefficients are meromorphic in~$x$.

If for all $0 < n \in \ZZ$,
\begin{gather*}
  \sum_{j = 1}^{n_H} H_j^n R_j(x, n)
\end{gather*}
is regular at $x = 0$, then $R_j = 0$ for all $1 \le j \le n_H$.
\end{lemma}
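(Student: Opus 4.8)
The plan is to argue by contradiction and to isolate the summands whose pole order at $x = 0$ is largest: after multiplying the regular function $\sum_j H_j^n R_j$ by a fixed power of $x$, those dominant summands must cancel among themselves to high order, and this eventually forces an exact identity of the shape $\sum_j g_j(x)^n R_j(x,n) \equiv 0$, to which a classical exponential-polynomial argument applies.

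First I would suppose some $R_j \not\equiv 0$ and let $S$ be the set of such indices, which is finite and nonempty. Write $H_j(x) = x^{-p_j} g_j(x)$ with $g_j$ holomorphic near $0$, $g_j(0) = c_j \ne 0$, and pole order $p_j \ge 1$; put $p = \max_{j \in S} p_j$ and let $S_1 \subseteq S$ be the indices attaining it. Choose $M \ge 0$ so that $x^M R_j(x, n)$ is holomorphic in $x$ for every $j \in S$ and every $n$, and set $A_n(x) = \sum_{j \in S_1} g_j(x)^n\, x^M R_j(x,n)$, which is holomorphic in $x$. Multiplying out, $x^M \sum_{j \in S} H_j^n R_j$ equals $x^{-np} A_n(x)$ plus a sum over $S \setminus S_1$ whose terms have pole order at most $n(p-1)$ at $x = 0$. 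Since $\sum_j H_j^n R_j$ is regular at $0$, the coefficients of $x^k$ with $k < -n(p-1)$ in this expression vanish; as such coefficients can only come from $x^{-np} A_n$, this says precisely that $A_n$ vanishes to order $\ge n$ at $x = 0$, for every $n \ge 1$. (For $p = 1$ the sum over $S \setminus S_1$ is empty and the conclusion is immediate.)

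Next I would upgrade this to $A_n \equiv 0$. Expanding in powers of $x$, the coefficient of $x^m$ in $g_j(x)^n$ is $c_j^n$ times a polynomial in $n$, so the coefficient of $x^m$ in $A_n$ has the form $\sum_{j \in S_1} c_j^n\, W_{j,m}(n)$ with $W_{j,m}$ a polynomial in $n$; for fixed $m$ it vanishes whenever $n > m$. Grouping the indices $j \in S_1$ according to the distinct nonzero values of $c_j$ and using the linear independence of the sequences $n \mapsto \gamma^n n^k$ for distinct $\gamma \ne 0$ --- equivalently, that a linear recurrence sequence vanishing for all large $n$ vanishes identically --- each grouped polynomial is identically zero, so the coefficient of $x^m$ in $A_n$ in fact vanishes for \emph{all} $n$. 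As $m$ was arbitrary, $A_n \equiv 0$; that is, $\sum_{j \in S_1} g_j(x)^n\, x^M R_j(x,n) = 0$ identically in $x$, for every $n \ge 1$.

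Finally, the $g_j$ with $j \in S_1$ are pairwise distinct holomorphic functions that do not vanish at $x = 0$, so outside a discrete set of points $x_0$ the numbers $g_j(x_0)$ are pairwise distinct and nonzero. For such $x_0$, applying the exponential-polynomial independence once more to the numerical identity $\sum_{j \in S_1} g_j(x_0)^n\, x_0^M R_j(x_0,n) = 0$ (valid for all $n \ge 1$) yields $x_0^M R_j(x_0, n) = 0$ for all $n$, so every coefficient of the polynomial $R_j(x_0, \cdot\,)$ vanishes. Letting $x_0$ range over this dense set and invoking the identity theorem for the meromorphic coefficient functions of $R_j$, we conclude $R_j \equiv 0$ for all $j \in S_1 \subseteq S$, contradicting the definition of $S$; hence all $R_j$ vanish. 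The step requiring the most care --- and the real crux --- is the passage from the vanishing-order bound on $A_n$ to the exact identity $A_n \equiv 0$: a naive ``leading term'' comparison fails because several $H_j$ may share both their pole order $p$ \emph{and} their leading Laurent coefficient $c_j$, and it is exactly the grouping of the $x^m$-Taylor coefficients by the value of $c_j$ that gets around this.
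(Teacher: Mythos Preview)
Your argument is correct and follows a genuinely different path from the paper's. Both proofs begin in the same way, by assuming some $R_j \ne 0$, stratifying by the maximal pole order $p$ of the $H_j$, and clearing denominators with a fixed power $x^M$. From there the two diverge. The paper further stratifies by the degree $d_R$ of the $R_j$ as polynomials in~$n$, then forms the Vandermonde matrix $\big((x^{o_H}H_j)^l\big)_{j,l}$ in the meromorphic functions $x^{o_H}H_j$ and inverts it to isolate the leading coefficient $R_{j,d_R}$, obtaining a contradiction by letting $n\to\infty$. You instead expand the holomorphic function $A_n(x)=\sum_{j\in S_1} g_j(x)^n\, x^M R_j(x,n)$ as a Taylor series in~$x$: each Taylor coefficient is an exponential polynomial $\sum_j c_j^{\,n} W_{j,m}(n)$ that vanishes for all $n>m$, hence identically by the standard linear-recurrence argument, giving $A_n\equiv 0$; a second application of the same exponential--polynomial independence, now at a generic point $x_0$, kills each $R_j$ with $j\in S_1$.

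Your route is arguably more elementary: it needs only the classical fact that the functions $n\mapsto \gamma^n n^k$ (distinct $\gamma\ne 0$, $k\ge 0$) are linearly independent, and it sidesteps both the extra stratification by $\deg_n R_j$ and any worry about poles introduced when inverting a Vandermonde matrix with meromorphic entries. The paper's Vandermonde approach, on the other hand, stays closer to the algebraic structure of the problem and makes the ``separation'' of the $H_j$ explicit via the matrix inverse. Your observation at the end is apt: the crux really is passing from ``$A_n$ vanishes to order~$\ge n$'' to ``$A_n\equiv 0$'', and it is exactly the grouping by the leading Laurent coefficients $c_j$ that makes this work when several $H_j$ share the same $p$ and $c_j$.
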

\begin{proof}
Set $J = \{ 1 \le j \le n_J \,:\, R_j \ne 0\}$.  The statement is equivalent to $J = \emptyset$.  We assume that $J$ is not empty.

Let $o_H = -\min_{j \in J} (\ord_{x = 0} H_j)$ be the maximal pole order of~$H_j$ at~$x = 0$, and $J_H = \{ j \in J \,:\, -\ord_{x = 0} H_j = o_H \}$ be the set of indices for which this maximum is attained.  Further, let $d_R = \max_{j \in J_H} (\deg_n R_j)$ be the maximal degree of $R_j$ as a polynomial in $n$, and let $J_R = \{ j \in J_H \,:\, \deg_n R_j = d_R \}$.  By construction, $J \ne \emptyset$ implies $J_H \ne \emptyset$, which implies $J_R \ne \emptyset$.  Write $R_{j, d_R}$ for the $d_R$th coefficient of $R_j$.  We will establish the lemma by finding a contradiction.

Choose some $0 \le o_R \in \ZZ$ such that the coefficients of $x^{o_R} R_j(x, n)$ are holomorphic in~$x = 0$ for all $1 \le j \le n$.  We rewrite the regularity assumption:
\begin{gather*}
  \sum_{j = 1}^{n_H} H_j^n R_j(x, n)
=
  n^{d_R} x^{-n o_H - o_R}
  \sum_{j = 1}^{n_H} (x^{o_H} H_j)^n (n^{-d_R} x^{o_R} R_j(x, n))
\text{,}
\end{gather*}
must be regular at~$x = 0$.  In other words, for all $0 \le l < n_H$, we have
\begin{gather*}
  \sum_{j = 1}^{n_H} (x^{o_H} H_j)^l\,
  \big( (x^{o_H} H_j)^n ( x^{o_R} R_{j, d_R} (x) + O((n + l)^{-1}) ) \big)
\equiv
  0
  \pmod{x^{n o_H + o_R}}
\text{.}
\end{gather*}
The Vandermon matrix $((x^{o_H} H_j)^l)_{j, l}$ is invertible as a matrix of meromorphic functions.  The $0$th coefficient of $(x^{o_H} H_j)^n$ is nonzero for $j \in J_H \supseteq J_R$.  By choosing $n$ large enough, we therefore see that
\begin{gather*}
   x^{o_R} R_{j, d_R} (x)
\equiv
  O(n^{-1})
  \pmod{x^{{\td n}(n)}}
\end{gather*}
with ${\td n}(n) \rightarrow \infty$ as $n \rightarrow \infty$.  This shows $R_{j, d_R} = 0$ for all $j \in J_H$.  This contradicts the choice of $d_R$, and hence proves the lemma.
\end{proof}

\section{Generating functions for special cycles on Shimura varieties}
\label{sec:special-cycles}

Let $\cL$ be a lattice of signature $(n, 2)$ with attached quadratic form $q_\cL$.  We adopt the notation used in the introduction.  That is, we denote the dual of $\cL$ by $\cL^\#$, and write $\disc^r\, \cL = (\cL^\# \slashdiv \cL)^r$ for powers of the attached discriminant form.  Fix a subgroup $\Gamma$ of the orthogonal group $\Orth{}(\cL)$ that fixes the discriminant form $\disc\, \cL$ pointwise.  Write ${\rm Gr}^-(\cL \otimes \CC)$ for the Grassmannian of $2$-dimensional negative subspaces of $\cL \otimes \CC$.  There are rational quadratic cycles on the Shimura variety $X_\Gamma = \Gamma \backslash {\rm Gr}^-(\cL \otimes \RR)$.  Given a tuple of vectors $v = (v_1, \ldots, v_r)$ in $\cL \otimes \QQ$, let
\begin{gather*}
  Z(v)
=
  \{ W \in {\rm Gr}^-(\cL \otimes \RR) \,:\, \lspan(v) \perp W \}
\text{.}
\end{gather*}
This cycle is non-trivial, if $\lspan(v) \subseteq \cL \otimes \QQ$ is positive.

Kudla defined so-called special cycles on $X_\Gamma$~\cite{Ku97}.  Associate to each $v$ a moment matrix $q_\cL(v) = \frac{1}{2} ( \langle v_i, v_j \rangle_\cL )_{1 \le i,j \le r}$, where $\langle v, w \rangle_\cL = q_\cL(v + w) - q_\cL(v) - q_\cL(w)$.  Given a semi-positive definite matrix $0 \le T \in \MatT{r}(\QQ)$ and $\lambda \in \disc^r\, \cL$, set
\begin{gather*}
  \Omega(T, \mu)
=
  \big\{ v \in \mu + \cL^r \,:\, T = q_\cL(v) \big\}
\text{.}
\end{gather*}
The symmetries $\Gamma$ act on $\Omega(T, \mu)$, and there are finitely many orbits.  Define
\begin{gather*}
  Z(T, \mu)
=
  \sum_{v \in \Gamma \backslash \Omega(T, \mu)}
  Z(v)
\text{.}
\end{gather*}
All cycles of this form descend to cycles on $X_\Gamma$, which we also denote by $Z(T, \mu)$.  Writing $\rk(T)$ for the rank of~$T$, we find that $Z(T, \mu)$ is a $\rk(T)$-cycle, whose class in $\CH^{\rk(T)}(X_\Gamma)_\CC$ is denoted by $\{Z(T, \mu)\}$.

Let $\omega^\vee$ be the anti-canonical bundle on $X_\Gamma$, and write $\{ \omega^\vee \}$ for its class in $\CH^1(X_\Gamma)_\CC$.  Fix a linear functional on $\CH^r(X_\Gamma)$.  Define the formal Fourier expansion
\begin{gather}
  \Theta_{\Gamma, f}(Z)
=
  \sum_{\substack{ \mu \in \disc^r\, \cL \\ 0 \le T \in \MatT{r}(\QQ) }}
    f\big( \{Z(T, \mu)\} \cdot \{\omega^\vee \}^{r - \rk(T)} \big) \,
    \exp\big(2 \pi i\, \tr(T Z) \big) \; \frake_\mu
\text{.}
\end{gather}
Here, $\frake_\mu$ is a canonical basis vector of the representation space $\CC\big[ \disc^r\, \cL \big]$ of the Weil representation of the double cover $\tSp{r}(\ZZ)$ of $\Sp{r}(\ZZ)$.

We now restrict to the case $r = 2$.  In his thesis, Zhang~\cite{Zh09} proved that $\Theta_{\Gamma, f}(Z)$ is a formal Fourier Jacobi expansions of weight $1 + \frac{n}{2}$.  This is essentially Proposition~2.6 on page~22.  Note that he does not make use of Condition~1 of his Theorem~2.5 while proving this proposition.
\begin{remark}
The type of $\Theta_{\Gamma, f}$ stated in~\cite{Zh09} is slightly incorrect.  On page~20, Zhang affirms that, in his notation,
\begin{gather*}
  F(T, \ud{\lambda})
=
  \sqrt{\det(A)}^{n' + n} \sqrt{\det(A)}^{n' - n}\, F(A^\tr T A, \ud{\lambda} A)
\text{.}
\end{gather*}
He argues that this holds if $n'$ is even.  But actually, $F(T, \ud{\lambda})$ is basis independent.  Consequently, the required invariance does not hold if $4 \nisdiv n'$, while $n' = 2$ in our application.  This can be fixed easily, by considering the subrepresentation of the Weil representation~$\rho_{L, r}$ (in Zhang's notation), which comes with the matching sign.
\end{remark}

\begin{corollary}
For any subgroup $\Gamma \subseteq O(L)$ as above and every linear functional $f$ on the Chow group $\CH^2(X_\Gamma)$ the function $\Theta_{\Gamma, f}$ is a vector valued Siegel modular form of weight $1 + \frac{n}{2}$.
\end{corollary}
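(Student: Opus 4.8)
The plan is to deduce the corollary immediately from Theorem~\ref{thm:formal-fourier-jacobi-series-equal-siegel-modular-forms} combined with Zhang's work. First I would pin down the type: let $\rho_{\cL, 2}$ be the subrepresentation of the Weil representation of $\tSp{2}(\ZZ)$ on $\CC[\disc^2\, \cL]$ carrying the sign singled out in the Remark preceding the corollary. Then $\rho_{\cL,2}$ is a finite dimensional unitary representation whose kernel contains a principal congruence subgroup, hence has finite index --- precisely the class of types admitted by Theorem~\ref{thm:formal-fourier-jacobi-series-equal-siegel-modular-forms}. Here the symmetric power weight is $l = 0$ and the scalar weight is $\det{}^{k}$ with $k = 1 + \frac{n}{2} \in \frac{1}{2}\ZZ$.

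Next I would invoke Zhang's Proposition~2.6 (with the sign fixed as in the Remark): it shows that for every $m \in \QQ_{\ge 0}$ the $m$th Fourier--Jacobi coefficient $\phi_m$ of $\Theta_{\Gamma,f}$ is a Jacobi form of weight $k$, index $m$ and type $\rho_{\cL,2}$ --- in particular holomorphic and satisfying the growth condition of Definition~\ref{def:jacobi-forms} --- and that the Fourier coefficients of the $\phi_m$ satisfy the involution relation $c(\phi_m; n, r) = (\det{}^{k}\otimes \sigma_0)^{-1}(S)\,\rho_{\cL,2}^{-1}(\rot(S))\,c(\phi_n; m, r)$. Since $\rmJ_{k,m}(\rho_{\cL,2}) = \{0\}$ for $m \notin \ZZ$, the defining sum runs effectively over nonnegative integers. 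This says exactly that $\Theta_{\Gamma,f} \in \rmF\rmM^{(2)}_{1 + n/2,\, 0}(\rho_{\cL,2})$, and it holds unconditionally, as the paragraph preceding the corollary already observes that Condition~1 of Zhang's Theorem~2.5 is not needed here.

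Finally I would apply Theorem~\ref{thm:formal-fourier-jacobi-series-equal-siegel-modular-forms} with $k = 1 + \frac{n}{2}$, $l = 0$ and $\rho = \rho_{\cL,2}$: it gives $\rmF\rmM^{(2)}_{1+n/2,\,0}(\rho_{\cL,2}) = \rmM^{(2)}_{1+n/2,\,0}(\rho_{\cL,2})$, so $\Theta_{\Gamma,f}$ converges absolutely and is a vector valued Siegel modular form of weight $1 + \frac{n}{2}$ and type $\rho_{\cL,2}$. Reinstating the functional $f$ (equivalently, assembling over all $f$) then also yields the more precise Theorem~\ref{thm:main-theorem-kudla}.

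The corollary carries essentially no difficulty beyond what is already in place; the only points that need care are bookkeeping. One must match Zhang's normalization of the Weil representation and of the moment matrices $T$ to the conventions of Section~\ref{sec:special-cycles}, and check that restricting to the matching-sign subrepresentation does not disturb the involution relation --- it does not, since that amounts only to shrinking the representation space while the relation is inherited from the ambient Weil representation. One should also confirm that $\Theta_{\Gamma,f}$ really lands in the $\tSp{2}(\ZZ)$-picture of vector valued modular forms referred to in Theorem~\ref{thm:main-theorem-kudla} rather than the $\GL{2}(\CC)$-picture, which is again built into the definition of $\rho_{\cL,2}$. I expect the verification of Zhang's Proposition~2.6 in our normalization to be the only genuinely substantive ingredient, and it has already been carried out.
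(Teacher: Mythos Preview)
Your proposal is correct and follows exactly the paper's own approach: invoke Zhang's result that $\Theta_{\Gamma,f}$ is a formal Fourier Jacobi expansion of weight $1+\tfrac{n}{2}$ and type $\rho_{\cL,2}$, then apply Theorem~\ref{thm:formal-fourier-jacobi-series-equal-siegel-modular-forms}. One inessential slip: the aside ``$\rmJ_{k,m}(\rho_{\cL,2})=\{0\}$ for $m\notin\ZZ$'' is not generally true for nontrivial type (cf.\ Proposition~\ref{prop:vanishing-of-jacobi-forms-by-rho}), but you never use it, so the argument stands.
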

\begin{proof}
This follows when combining Zhang's results and Theorem~\ref{thm:formal-fourier-jacobi-series-equal-siegel-modular-forms}.
\end{proof}

Using this statement, we can reprove Theorem~3.1 of~\cite{Zh09}.
\begin{corollary}
The space
\begin{gather*}
  \lspan\big( \{Z(T, \mu)\} \,:\, 0 < T \in \MatT{2}(\QQ),\, \mu \in \disc^r\, \cL \big)
\subseteq 
  \CH^2(X_\Gamma)_\CC
\end{gather*}
is finite dimensional.
\end{corollary}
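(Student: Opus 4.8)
The plan is to combine the modularity of the generating series $\Theta_{\Gamma, f}$, just established, with the classical finiteness of spaces of Siegel modular forms. Write $\rmM = \rmM^{(2)}_{1 + \frac{n}{2}}(\rho_{\cL, 2})$ for the space of vector valued Siegel modular forms of weight~$1 + \frac{n}{2}$ and type~$\rho_{\cL, 2}$; by the Koecher principle this is a finite dimensional vector space, and by the previous corollary $\Theta_{\Gamma, f} \in \rmM$ for every linear functional~$f$ on $\CH^2(X_\Gamma)_\CC$.

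First I would use that a Siegel modular form is determined by its Fourier expansion, so that the coefficient functionals $c_{T, \mu} :\, \rmM \to \CC$, $\Phi \mapsto c(\Phi_\mu; T)$, indexed by $0 \le T \in \MatT{2}(\QQ)$ and $\mu \in \disc^2\, \cL$, span the dual space~$\rmM^\vee$. Since $\dim \rmM < \infty$, finitely many of them suffice: there is a finite set~$\cF$ of pairs $(T, \mu)$ such that $\{ c_{T, \mu} \,:\, (T, \mu) \in \cF \}$ spans $\rmM^\vee$.

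Next, for an arbitrary pair $(T', \mu')$ with $0 < T'$, I would expand $c_{T', \mu'} = \sum_{(T, \mu) \in \cF} a_{T, \mu}\, c_{T, \mu}$ in $\rmM^\vee$ with suitable scalars $a_{T, \mu}$ (depending on $T', \mu'$), and evaluate both sides at $\Theta_{\Gamma, f}$. Since the $(T, \mu)$th Fourier coefficient of $\Theta_{\Gamma, f}$ is $f\big( \{ Z(T, \mu) \} \cdot \{\omega^\vee\}^{2 - \rk(T)} \big)$, and since $\rk(T') = 2$, this gives
\begin{gather*}
  f\big( \{ Z(T', \mu') \} \big)
=
  \sum_{(T, \mu) \in \cF} a_{T, \mu}\;
  f\big( \{ Z(T, \mu) \} \cdot \{\omega^\vee\}^{2 - \rk(T)} \big)
\end{gather*}
for every linear functional~$f$ on $\CH^2(X_\Gamma)_\CC$. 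As the algebraic dual of a vector space over a field separates points, this forces
\begin{gather*}
  \{ Z(T', \mu') \}
=
  \sum_{(T, \mu) \in \cF} a_{T, \mu}\;
  \{ Z(T, \mu) \} \cdot \{\omega^\vee\}^{2 - \rk(T)}
\end{gather*}
in $\CH^2(X_\Gamma)_\CC$. Hence every class $\{ Z(T', \mu') \}$ with $0 < T'$ lies in the finite dimensional subspace of $\CH^2(X_\Gamma)_\CC$ spanned by the finitely many classes $\{ Z(T, \mu) \} \cdot \{\omega^\vee\}^{2 - \rk(T)}$, $(T, \mu) \in \cF$, which gives the claim.

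I do not expect a genuine obstacle here, since all substantive input -- modularity of $\Theta_{\Gamma, f}$ and finiteness of $\rmM$ -- is already in place; the only step warranting a moment's care is the passage from ``$f$ annihilates a fixed class for every~$f$'' to ``the class is zero'', which is immediate once one recalls that $\CH^2(X_\Gamma)_\CC$ is a vector space over~$\CC$ and hence its points are separated by linear functionals.
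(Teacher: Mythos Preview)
Your argument is correct and follows the same approach as the paper, which simply records in one line that the result ``follows from $\dim \rmM^{(2)}_{k}(\rho) < \infty$''. You have spelled out the linear-algebra mechanism behind that one line (coefficient functionals on a finite dimensional space, together with separation of points by the algebraic dual), but the underlying idea is identical.
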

\begin{proof}
This follows from $\dim \rmM^{(2)}_{k}(\rho) < \infty$, which holds for all $k \in \frac{1}{2} \ZZ$ and any unitary representation $\rho$ of $\tSp{2}(\ZZ)$.
\end{proof}

\section{Computing Siegel modular forms via Fourier Jacobi expansions}
\label{sec:computing-via-fj-expansions}

In this section, we assume that the base field for all modular forms is~$\QQ$.  All statements concerning dimensions and Fourier expansions that we will make use of hold for any sufficiently large field $\QQ \subseteq K \subseteq \CC$.
If the reader wishes, he can adjust Algorithm~\ref{alg:compute-via-fj-expansion:compute-jacobi} accordingly.

We deal with truncated Fourier expansions of Jacobi forms, Siegel modular forms and formal Fourier Jacobi expansions.  We start be defining appropriate index sets.  Given $0 < B \in \ZZ$, set
\begin{align*}
  I^{(2)}(B)
\text{,}
&=
  \big\{ (n, r, m) \in \QQ^3 \,:\, 0 \le m, n < B,\, 4 n m - r^2 \ge 0 \big\}
\\[4pt]
  I^\rmJ(m; B)
&=
  \big\{ (n, r) \in \QQ^2 \,:\, 0 \le n < B,\, 4 n m - r^2 \ge 0 \big\}
\text{.}
\end{align*}
By abuse of notation, we denote the Fourier expansion map always by the same symbol~$\cF\cE_B$.  Given any representation $\rho$ of $\tSp{2}(\ZZ)$ or $\Gamma^\rmJ$ with representation space $V_\rho$, define
\begin{alignat*}{2}
  \cF\cE_B &:\, \rmM^{(1)}_{k}(\rho) \rightarrow V_\rho^B
\text{,}
&\;\;
  \Phi &\longmapsto \big( c(\Phi; n) \big)_{0 \le n < B}
\text{;}
\\[2pt]
  \cF\cE_B &:\, \rmM^{(2)}_{k, l}(\rho) \rightarrow (V_{\sigma_l} \otimes V_\rho)^{I^{(2)}(B)}
\text{,}
&\;\;
  \Phi &\longmapsto \big( c(\Phi; n, r, m) \big)_{(n, r, m)}
\text{;}
\\[2pt]
  \cF\cE_B &:\, \rmJ_{k, m}(\sigma_l \otimes \rho) \rightarrow (V_{\sigma_l} \otimes V_\rho)^{I^\rmJ(m; B)}
\text{,}
&\;\;
  \phi &\longmapsto \big( c(\phi; n, r) \big)_{(n, r)}
\text{;}
\\[2pt]
  \cF\cE_B &:\, \rmF\rmM^{(2)}_{k, l}(\rho) \rightarrow \bigoplus_{0 \le m < B} (V_{\sigma_l} \otimes V_\rho)^{I^\rmJ(m; B)}
\text{,}
&\;\;
  (\phi_m)_m &\longmapsto \Big( \big( c(\phi_m; n, r) \big)_{(n, r)} \Big)_{0 \le m < B}
\text{.}
\end{alignat*}
There are truncation maps from $V^{I^\rmJ(m; B')}$ to $V^{I^\rmJ(m; B)}$ if $B' \ge B$.  When comparing Fourier expansions that have different ``precision'', we will freely apply them.

\begin{proposition}
\label{prop:elliptic-modular-forms-fourier-expansion-precision}
Given $0 \le k \in \frac{1}{2} \ZZ$ and a representation $\rho$ of $\tSp{1}(\ZZ)$, the map $\cF\cE_B :\, \rmM^{(1)}_k(\rho) \rightarrow V_\rho^B$ is injective for $B > \frac{k}{12} + 1$.
\end{proposition}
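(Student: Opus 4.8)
The plan is to divide $\Phi$ by a sufficiently high power of the modular discriminant and to reduce to the classical vanishing of holomorphic modular forms of negative weight.

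Suppose $\Phi \in \rmM^{(1)}_k(\rho)$ lies in the kernel of $\cF\cE_B$, i.e.\ $c(\Phi; n) = 0$ for all $0 \le n < B$; I must show $\Phi = 0$. Let $\Delta \in \rmM^{(1)}_{12}$ denote the modular discriminant, which is nowhere vanishing on $\HS^{(1)}$ and satisfies $c(\Delta;0) = 0$ and $c(\Delta;1) \neq 0$, so that $\Delta^{-m} = q^{-m}\bigl(1 + O(q)\bigr)$ is holomorphic and nowhere zero on $\HS^{(1)}$ with a pole of order~$m$ at the cusp. First I would set $m = \lfloor k/12 \rfloor + 1$, so that $k - 12 m < 0$, and note that the hypothesis $B > \tfrac{k}{12} + 1 \ge m$ forces $B \ge m$. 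Next I would put $\Psi = \Delta^{-m}\, \Phi$ and verify that $\Psi \in \rmM^{(1)}_{k - 12 m}(\rho)$: it is holomorphic on $\HS^{(1)}$ since $\Delta$ has no zero there; it satisfies $\Psi \big|_{\det{}^{k - 12 m},\, \rho}\, (\gamma, \omega) = \Psi$ for all $(\gamma, \omega) \in \tSp{1}(\ZZ)$ because $\Delta$ is modular of weight $\det{}^{12}$ and trivial type while $\Phi$ has weight $\det{}^{k}$ and type $\rho$; and since $\Phi = \sum_{n \ge B} c(\Phi; n)\, q^n$ and $\Delta^{-m} = q^{-m} + \cdots$, the Fourier expansion of $\Psi$ is supported on exponents $\ge B - m \ge 0$, so $\Psi$ is holomorphic at the cusp.

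It remains to invoke that $\rmM^{(1)}_w(\rho) = \{0\}$ for every $w < 0$ and every finite-index-kernel unitary representation $\rho$. I would deduce this from the fact that, upon restricting to the finite-index subgroup $\ker \rho \subseteq \tSp{1}(\ZZ)$, the components of an element of $\rmM^{(1)}_w(\rho)$ are ordinary (possibly half-integral weight) holomorphic modular forms of negative weight, which vanish classically; alternatively one may observe that $\|\Psi(\tau)\|^2 (\Im \tau)^w$ is $\tSp{1}(\ZZ)$-invariant and tends to $0$ at the cusp, whence the maximum principle for the subharmonic function $\log \|\Psi(\tau)\|^2 + w \log \Im \tau$ forces $\Psi \equiv 0$. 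Applying this with $w = k - 12 m < 0$ gives $\Psi = 0$, hence $\Phi = \Delta^{m}\, \Psi = 0$, which proves the asserted injectivity.

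There is no real obstacle here, and the argument is uniform in $k \in \tfrac12 \ZZ$ because $\Delta$ has integral weight; the sole external ingredient is the classical triviality of negative-weight holomorphic modular forms. The one point worth flagging is the role of the ``$+\,1$'' in $B > \tfrac{k}{12} + 1$: it is exactly what guarantees $B \ge m = \lfloor k/12 \rfloor + 1$, i.e.\ that $\Delta^{-m}$ does not introduce an honest pole of $\Psi$ at the cusp. (A slightly sharper variant, dividing only down to weight $0$ and using that holomorphic weight-$0$ forms are constant while $\ord_\infty \Psi > 0$, would already work for $B > \lceil k/12 \rceil$, but the stated bound is amply sufficient.)
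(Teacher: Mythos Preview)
Your argument is correct and essentially identical to the paper's own proof: divide by $\Delta^{\lfloor k/12\rfloor+1}$ and use the vanishing of holomorphic modular forms of negative weight. The only difference is that you spell out why $\rmM^{(1)}_w(\rho)=\{0\}$ for $w<0$ in the vector-valued setting, which the paper takes for granted.
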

\begin{proof}
Suppose that there is a modular form $\phi \in \rmM^{(1)}_k(\rho)$ whose first $l := \big\lfloor \frac{k}{12} \big\rfloor + 1$ Fourier coefficients vanish.  Then $\Delta^{-l} \phi$, where $\Delta$ is the discriminant form, is a non-zero, holomorphic modular form of negative weight, which cannot be.
\end{proof}

\begin{proposition}
\label{prop:jacobi-forms-fourier-expansion-precision}
Given $0 \le k \in \frac{1}{2}\ZZ$, $0 < m \in \QQ$, and a representation $\rho$ of $\tGamma^\rmJ$ with representation space $V_\rho$, the map $\cF\cE_B :\, \rmJ_{k,m}(\sigma_l \otimes \rho) \rightarrow \big( V_{\sigma_l} \otimes V_\rho \big)^{I^\rmJ(m; B)}$ is injective for $B > \frac{k + l + 2 \lfloor m \rfloor}{12} + 1$.
\end{proposition}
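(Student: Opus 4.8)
The plan is to recognize $\ker\cF\cE_B$ as a space of Jacobi forms of large vanishing order and then to kill that space by transporting it to elliptic modular forms via the Taylor coefficient maps of Section~\ref{sec:jacobi-forms}. I would start by identifying the kernel. By condition~\ref{def:jacobi-forms:it:growths} of Definition~\ref{def:jacobi-forms}, every $\phi\in\rmJ_{k,m}(\sigma_l\otimes\rho)$ satisfies $c(\phi;n,r)=0$ whenever $4mn-r^2<0$; hence the vanishing of $\cF\cE_B(\phi)$, which by definition only asserts $c(\phi;n,r)=0$ for $(n,r)\in I^\rmJ(m;B)$, is equivalent to $c(\phi;n,r)=0$ for all $n<B$ and all $r$, that is, to $\ord\,\phi\ge B$. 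Thus $\ker\cF\cE_B=\rmJ_{k,m}(\sigma_l\otimes\rho)[B]$, and the assertion is exactly that this space is trivial as soon as $B>\frac{k+l+2\lfloor m\rfloor}{12}+1$.

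Next I would reduce to irreducible $\rho$: the kernel of $\rho$ is normal of finite index, so $\rho$ factors through the finite group $\tGamma^\rmJ/\ker\rho$ and hence decomposes as $\rho=\bigoplus_j\rho_j$ with the $\rho_j$ irreducible and, being representations of a finite group, unitary; since $\rmJ_{k,m}(\sigma_l\otimes\rho)=\bigoplus_j\rmJ_{k,m}(\sigma_l\otimes\rho_j)$ and $\cF\cE_B$ respects this splitting, it suffices to treat irreducible $\rho$ (for integral $k$ one uses $\rho$ as a representation of $\Gamma^\rmJ$, and one may assume it descends there, as otherwise the space is already $\{0\}$). For such $\rho$, Corollary~\ref{cor:jacobi-with-representation-taylor-coefficients-injective} provides an injection of $\rmJ_{k,m}(\sigma_l\otimes\rho)[B]$ into a finite direct sum of spaces $\rmM_{k'}(\rho)[B]$ of elliptic modular forms, where each weight $k'$ occurring is $k+i+2\nu$ with $0\le i\le l$, $0\le\nu\le\lfloor m\rfloor$, or $k+i+2\nu+1$ with $0\le i\le l$, $0\le\nu\le\lfloor m\rfloor-1$; in every case $0\le k'\le k+l+2\lfloor m\rfloor$. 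For $B>\frac{k+l+2\lfloor m\rfloor}{12}+1$ we then have $B>\frac{k'}{12}+1$ for each such $k'$, so Proposition~\ref{prop:elliptic-modular-forms-fourier-expansion-precision} — whose content is precisely that the kernel $\rmM^{(1)}_{k'}(\rho)[B]$ of $\cF\cE_B$ in weight $k'$ vanishes in this range — annihilates every summand. Hence $\rmJ_{k,m}(\sigma_l\otimes\rho)[B]=\{0\}$, which is the claimed injectivity.

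The point requiring genuine care, and the one I expect to be the main obstacle, is the half-integral weight case $k\in\frac12\ZZ\setminus\ZZ$, since Corollary~\ref{cor:jacobi-with-representation-taylor-coefficients-injective}, Theorem~\ref{thm:jacobi-taylor-coefficients-injective}, and the operators $\cD_\nu$ were set up only for $k\in\ZZ$. The remedy is that none of this machinery uses integrality of $k$: replacing the factorials in the definition of the $\cD_\nu$ by the corresponding values of the Gamma function yields operators that still carry the $2\nu$\nbd th Taylor coefficient of a Jacobi form of weight~$k$ to a holomorphic elliptic modular form of weight~$k+\nu$, and the zero\nbd counting argument for the elliptic functions $z\mapsto f\circ\phi(\tau,Nz)$ in the proof of Theorem~\ref{thm:jacobi-taylor-coefficients-injective} involves only the index, not the weight. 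This produces the half-integral analog of Corollary~\ref{cor:jacobi-with-representation-taylor-coefficients-injective}, with the same bound $k'\le k+l+2\lfloor m\rfloor$ on the (now half-integral) weights occurring, after which the argument of the previous paragraph runs verbatim, Proposition~\ref{prop:elliptic-modular-forms-fourier-expansion-precision} being already stated for all $0\le k'\in\frac12\ZZ$. A cruder alternative, avoiding any reproving, is to tensor $\phi$ with a fixed half-integral weight Jacobi form of small index and vanishing order~$0$ (such as a Jacobi theta constant of weight~$\frac12$ and index~$\frac12$) and apply the integral case to the product.
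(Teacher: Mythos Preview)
Your proof is correct and follows essentially the same route as the paper's: reduce via the $\sigma_l$-decomposition and the Taylor coefficient maps $\cD_\nu$ to elliptic modular forms of weight at most $k+l+2\lfloor m\rfloor$, then invoke Proposition~\ref{prop:elliptic-modular-forms-fourier-expansion-precision}. Your explicit identification $\ker\cF\cE_B=\rmJ_{k,m}(\sigma_l\otimes\rho)[B]$ and your discussion of the half-integral weight case are in fact more careful than the paper, which silently uses the machinery of Section~\ref{sec:jacobi-forms} (stated there only for $k\in\ZZ$) without comment.
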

\begin{proof}
The bijection
\begin{gather*}
  \rmJ_{k, m}(\sigma_l \otimes \rho)
\cong
  \bigoplus_{i = 0}^l \rmJ_{k + i, m}(\rho)
\end{gather*}
is induced by differential operators with constant coefficients.  Hence it is compatible with taking Fourier expansions.  It is thus sufficient to treat the case $l = 0$.

We have
\begin{gather*}
  \cF\cE_B\big( \rmJ_{k,m}(\rho) \big)
\hookrightarrow
  \begin{cases}
    \bigoplus_{\nu = 0}^{\lfloor m \rfloor} \cF\cE_B\big( \rmM^{(1)}_{k + 2 \nu}(\rho) \big)
  \text{,}
  &
    \text{if $(-1)^k \rho(-I_2)$ is trivial;}
  \\
    \bigoplus_{\nu = 0}^{\lfloor m \rfloor - 1} \cF\cE_B\big( \rmM^{(1)}_{k + 2 \nu + 1}(\rho) \big)
  \text{,}
  &
    \text{otherwise.}
  \end{cases}
\end{gather*}
Elements of $\rmM^{(1)}_{k + i}(\rho)$, $0 \le i \le 2 \lfloor m \rfloor$  are uniquely determined by their first $\frac{k + 2 \lfloor m \rfloor}{12} + 1$ Fourier coefficients.  This proves the proposition.
\end{proof}

\begin{proposition}
\label{prop:formal-fourier-jacobi-determination-of-coefficients--truncated-version}
Let $k \in \frac{1}{2} \ZZ$, $l \in \ZZ$, and $0 < m \in \QQ$.  Further, let $\rho$ be a representation of $\tGamma^\rmJ$ with representation space $V_\rho$.  Given $0 < B \in \ZZ$, fix families
\begin{gather*}
  (\phi_m)_{0 \le m < B},\,
  (\psi_m)_{0 \le m < B}
\in
  \bigoplus_{0 \le m < B} \rmJ_{k, m}(\sigma_l \otimes \rho)
\text{.}
\end{gather*}
Suppose they satisfy $\cF\cE_{\frac{k + l}{10} + 1}( \phi_m ) = \cF\cE_{\frac{k + l}{10} + 1}( \psi_m )$ for all $0 \le m \le \frac{k + l}{10}$ and
\begin{align*}
  c(\phi_m; n, r)
&=
  (\det{}^k \otimes \sigma_l)^{-1}(S)\, \rho^{-1}(\rot(S))\, c(\phi_n; m, r)
\text{,}
\\
  c(\psi_m; n, r)
&=
  (\det{}^k \otimes \sigma_l)^{-1}(S)\, \rho^{-1}(\rot(S))\, c(\psi_n; m, r)
\text{,}
\end{align*}
for all $0 < n, m < B$ and all $r \in \ZZ$.  Then we have $\phi_m = \psi_m$ for all $0 \le m < B$.
\end{proposition}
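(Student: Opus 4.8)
The plan is to argue by strong induction on the index~$m$, which ranges over the discrete, bounded-below set of values for which $\rmJ_{k,m}(\sigma_l\otimes\rho)\ne\{0\}$ (cf.\ Proposition~\ref{prop:vanishing-of-jacobi-forms-by-rho}), following closely the proof of Proposition~\ref{prop:formal-fourier-jacobi-determination-of-coefficients}. The base case $m=0$ is immediate: $\phi_0$ and $\psi_0$ have index~$0$, hence (via $\rmJ_{k,0}(\sigma_l\otimes\rho)\cong\bigoplus_{i=0}^l\rmJ_{k+i,0}(\rho)$) are elliptic modular forms, and since $\frac{k+l}{10}+1>\frac{k+l}{12}+1$ the hypothesis $\cF\cE_{\frac{k+l}{10}+1}(\phi_0)=\cF\cE_{\frac{k+l}{10}+1}(\psi_0)$ forces $\phi_0=\psi_0$ by Proposition~\ref{prop:elliptic-modular-forms-fourier-expansion-precision}.

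For the inductive step, fix $m$ with $0<m<B$ and assume $\phi_{m'}=\psi_{m'}$ for every smaller index~$m'$. The computation carried out in the proof of Proposition~\ref{prop:formal-fourier-jacobi-determination-of-coefficients}, applied to both families and using $\phi_n=\psi_n$ for $n<m$, then yields $c(\phi_m;n,r)=c(\psi_m;n,r)$ for all $0\le n<m$ and all~$r$ (the case $n=0$ being handled exactly as in Proposition~\ref{prop:formal-fourier-jacobi-determination-of-coefficients}); equivalently, $\phi_m-\psi_m\in\rmJ_{k,m}(\sigma_l\otimes\rho)[m]$.

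Now I would distinguish two cases according to the size of~$m$. If $m>\frac{k+l}{10}$, then in fact $\rmJ_{k,m}(\sigma_l\otimes\rho)[m]=\{0\}$, so $\phi_m=\psi_m$: by Corollary~\ref{cor:jacobi-with-representation-taylor-coefficients-injective} this space embeds into a finite sum of spaces $\rmM^{(1)}_{k'}(\rho)[m]$ with $k'\le k+l+2\lfloor m\rfloor$, and a nonzero element of $\rmM^{(1)}_{k'}(\rho)[m]$ would, after division by a suitable power of the discriminant~$\Delta$ (clearing denominators by a rescaling if $m\notin\ZZ$, as in the proof of Theorem~\ref{thm:jacobi-taylor-coefficients-injective} and Proposition~\ref{prop:elliptic-modular-forms-fourier-expansion-precision}), be a nonzero holomorphic elliptic modular form of negative weight, which is impossible once $12m>k'$; since $\lfloor m\rfloor\le m$, the inequality $10m>k+l$ already gives $12m>k+l+2\lfloor m\rfloor\ge k'$. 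If instead $m\le\frac{k+l}{10}$, I would feed in the remaining hypothesis $\cF\cE_{\frac{k+l}{10}+1}(\phi_m)=\cF\cE_{\frac{k+l}{10}+1}(\psi_m)$, which together with the coefficients already obtained shows $c(\phi_m;n,r)=c(\psi_m;n,r)$ for all $(n,r)\in I^\rmJ(m;\frac{k+l}{10}+1)$; since $m\le\frac{k+l}{10}$ forces $\frac{k+l}{10}+1>\frac{k+l+2\lfloor m\rfloor}{12}+1$, Proposition~\ref{prop:jacobi-forms-fourier-expansion-precision} gives $\phi_m=\psi_m$. This closes the induction.

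The only real content, beyond these formal manipulations, is the verification that the precision $\frac{k+l}{10}+1$ in the statement is calibrated so that the two cases abut: $\frac{k+l}{10}$ is precisely the index beyond which $\rmJ_{k,m}(\sigma_l\otimes\rho)[m]$ vanishes, reflecting the gap $12-2=10$ between the weight of~$\Delta$ and the $2\lfloor m\rfloor$ shift appearing in Corollary~\ref{cor:jacobi-with-representation-taylor-coefficients-injective}. Some care is also needed at the boundary values of~$m$, where the inequalities above degenerate to equalities: there the extra coefficients with $n<m$ produced in the inductive step, combined with the already established equality $\phi_0=\psi_0$, supply the missing unit of precision and in particular control the constant term $c(\phi_m;0,0)$ that the vanishing-order argument alone does not see. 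As with the analogous bookkeeping in the proof of Theorem~\ref{thm:main-theorem-fourier-jacobi}, this is where the argument must be executed carefully, but it introduces no new idea.
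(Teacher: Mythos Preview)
Your proposal is correct and follows essentially the same approach as the paper: split according to whether $m\le\frac{k+l}{10}$ (use the Fourier-expansion hypothesis together with Proposition~\ref{prop:jacobi-forms-fourier-expansion-precision}) or $m>\frac{k+l}{10}$ (use the symmetry relation to force vanishing order~$\ge m$, then observe $\rmJ_{k,m}(\sigma_l\otimes\rho)[m]=\{0\}$). The paper organizes it slightly differently---it first handles all $m\le\frac{k+l}{10}$ directly, without induction, and only then inducts for larger~$m$---which makes your boundary-case worry disappear: for $m\le\frac{k+l}{10}$ one has $\frac{k+l+2\lfloor m\rfloor}{12}\le\frac{k+l+2m}{12}\le\frac{k+l}{10}<\frac{k+l}{10}+1$ outright, so no ``missing unit of precision'' needs to be recovered from the inductive step.
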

\begin{proof}
By Proposition~\ref{prop:jacobi-forms-fourier-expansion-precision}, $\phi_m$ and $\psi_m$ are determined by $c(\phi_m; n, r)$ and $c(\psi_m; n, r)$ with $n \le \frac{k + l + 2 \lfloor m \rfloor}{12}$.  For $m \le \frac{k + l}{10}$, we have $\frac{k + l + 2m}{12} < \frac{k + l}{10} + 1$, and hence $\phi_m = \psi_m$.

The rest of the proof is analog to the proof of Proposition~\ref{prop:formal-fourier-jacobi-determination-of-coefficients}.  It suffices to show that $\phi_m$, $m > \frac{k + l}{10}$ is uniquely determined by all $\phi_{m'}$, $m' < m$.  Employing Proposition~\ref{prop:jacobi-forms-fourier-expansion-precision}, this follows from the inequality $\frac{k + l + 2 m}{12} < m$, which holds exactly if $m > \frac{k + l}{10}$.
\end{proof}

Given $k \in \frac{1}{2} \ZZ$, $l \in \ZZ$, a unitary representation $\rho$ of $\tSp{2}(\ZZ)$, and $0 < B \in \ZZ$, define
\begin{align*}
  \rmF\rmM^{(2)}_{k, l}(\rho)_B
=
  \Big\{
&
    (\phi_m)_{0 \le m < B} \in \bigoplus_{0 \le m < B} \cF\cE_B \big( \rmJ_{k, m}(\sigma_l \otimes \rho) \big)
      \,:\,
\\[2pt]
&\quad
    c(\phi_m; n, r) = (\det{}^k \otimes \sigma_l)^{-1}(S) \, \rho^{-1}(\rot(S))\, c(\phi_n; m, r)
\\
&\quad
    \text{for all $m, n < B$ and all $r \in \ZZ$}
  \Big\}
\text{.}
\end{align*}

\begin{proposition}
\label{prop:inclusion-of-spaces-of-truncated-formal-fourier-jacobi-expansions}
Given $k \in \frac{1}{2} \in \ZZ$, $l \in \ZZ$, a unitary representation $\rho$ of $\tSp{2}(\ZZ)$ with representation space $V_\rho$, and $\frac{k + l}{10} < B \le B' \in \ZZ$ the following inclusion holds.
\begin{gather*}
  \rmF\rmM^{(2)}_{k, l}(\rho)_{B'}
\subseteq
  \rmF\rmM^{(2)}_{k, l}(\rho)_B
\text{,}
\end{gather*}
where we implicitly truncate elements of the space on the right hand side.
\end{proposition}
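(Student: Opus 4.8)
The statement is essentially a bookkeeping fact, so the plan is to unwind the two defining conditions of $\rmF\rmM^{(2)}_{k,l}(\rho)_B$ and to check that truncation preserves each of them. I would fix $(\phi_m)_{0 \le m < B'} \in \rmF\rmM^{(2)}_{k,l}(\rho)_{B'}$ and, for $0 \le m < B$, write $\ov\phi_m$ for the image of $\phi_m$ under the truncation map $\big(V_{\sigma_l} \otimes V_\rho\big)^{I^\rmJ(m; B')} \to \big(V_{\sigma_l} \otimes V_\rho\big)^{I^\rmJ(m; B)}$. Discarding the components with $m \ge B$, it then remains to show that $(\ov\phi_m)_{0 \le m < B}$ lies in $\rmF\rmM^{(2)}_{k,l}(\rho)_B$, which splits into two checks.

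First I would verify that each $\ov\phi_m$ belongs to $\cF\cE_B\big(\rmJ_{k,m}(\sigma_l \otimes \rho)\big)$. By definition $\phi_m = \cF\cE_{B'}(g_m)$ for some genuine Jacobi form $g_m \in \rmJ_{k,m}(\sigma_l \otimes \rho)$; and since $B \le B'$ yields the index-set inclusion $I^\rmJ(m; B) \subseteq I^\rmJ(m; B')$, the map $\cF\cE_B$ is simply the truncation map composed with $\cF\cE_{B'}$. Hence $\ov\phi_m = \cF\cE_B(g_m) \in \cF\cE_B\big(\rmJ_{k,m}(\sigma_l \otimes \rho)\big)$.

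Next I would check the involution relation. By hypothesis $c(\phi_m; n, r) = (\det{}^k \otimes \sigma_l)^{-1}(S)\, \rho^{-1}(\rot(S))\, c(\phi_n; m, r)$ for all $m, n < B'$ and all $r \in \ZZ$. Restricting to $m, n < B$, the Fourier coefficients appearing on the two sides — those indexed by $(n,r)$, respectively $(m,r)$, with $4mn - r^2 \ge 0$ — are exactly the ones retained by $\ov\phi_m$ and $\ov\phi_n$, so the relation holds verbatim for the truncated family. This establishes the asserted inclusion.

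I do not anticipate any real obstacle: the only point that needs care is comparing the index sets $I^\rmJ(m; B)$ and $I^\rmJ(m; B')$ in the right direction and observing that $\cF\cE_B$ commutes with truncation. It is perhaps worth remarking that the hypothesis $\frac{k+l}{10} < B$ is not used in this argument; it is imposed so that the companion statements of this section — notably Proposition~\ref{prop:formal-fourier-jacobi-determination-of-coefficients--truncated-version} — apply to these truncated spaces.
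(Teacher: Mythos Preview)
Your argument is correct: the two defining conditions of $\rmF\rmM^{(2)}_{k,l}(\rho)_B$ are visibly preserved under truncation, so the map is well defined. You are also right that the hypothesis $\frac{k+l}{10} < B$ plays no role in this check.

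The paper proceeds differently. It invokes Proposition~\ref{prop:formal-fourier-jacobi-determination-of-coefficients--truncated-version}: once $B > \frac{k+l}{10}$, an element of either $\rmF\rmM^{(2)}_{k,l}(\rho)_{B}$ or $\rmF\rmM^{(2)}_{k,l}(\rho)_{B'}$ is uniquely determined by its first $\frac{k+l}{10}$ Fourier--Jacobi coefficients (at that precision). This uniqueness both identifies the truncated element inside the smaller space and shows that truncation is \emph{injective}. The latter is the point actually used downstream: in the termination proof of Algorithm~\ref{alg:compute-via-fj-expansion:compute-jacobi} one needs that $\dim \rmF\rmM^{(2)}_{k,l}(\rho)_{B}$ is monotone decreasing in $B$, and mere well-definedness of the truncation map does not give this without injectivity. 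So your route is more elementary for the literal statement, while the paper's route simultaneously supplies the injectivity that the application requires; if you keep your argument, you should note separately that Proposition~\ref{prop:formal-fourier-jacobi-determination-of-coefficients--truncated-version} makes the truncation injective for $B > \frac{k+l}{10}$.
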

\begin{proof}
This follows from Proposition~\ref{prop:formal-fourier-jacobi-determination-of-coefficients--truncated-version}:  Given the first $\frac{k + l}{10}$ Fourier Jacobi coefficient of any element in $\rmF\rmM^{(2)}_{k, l}(\rho)_{B}$ or $\rmF\rmM^{(2)}_{k, l}(\rho)_{B'}$, all others are uniquely defined.
\end{proof}

\begin{algorithm}
\label{alg:compute-via-fj-expansion:compute-jacobi}
Let $0 \le k \in \frac{1}{2}\ZZ$, $0 \le l \in \ZZ$, and let $\rho$ be a unitary representation of $\tSp{2}(\ZZ)$.  Given $\frac{k + l}{10} < B \in \ZZ$, the following algorithm computes the space $\cF\cE_B\big( \rmM^{(2)}_{k,l}(\rho) \big)$.
\begin{enumerate}[(1)]
\item \label{it:alg:compute-via-fj-expansion:compute-jacobi}
  Compute $\cF\cE_B\big( \rmJ_{k, m}(\sigma_l \otimes \rho) \big)$ for all $0 \le m < B$.
\item \label{it:alg:compute-via-fj-expansion:solve-system-of-linear-equations}
  Compute $\rmF\rmM^{(2)}_{k, l}(\rho)_B$.
\item \label{it:alg:compute-via-fj-expansion:compare-dimensions}
  If $\dim \rmF\rmM^{(2)}_{k, l}(\rho)_B = \dim \rmM^{(2)}_{k, l}(\rho)$, then we are done.  Otherwise, increase $B$ and go back to Step~\ref{it:alg:compute-via-fj-expansion:compute-jacobi}.
\end{enumerate}
After processing these steps, there is a one-to-one correspondence of elements $\Phi \in \cF\cE_B\big( \rmM^{(2)}_{k,l}(\rho) \big)$ and elements $(\phi_m) \in \rmF\rmM^{(2)}_{k, l}(\rho)_{B}$ via $c(\Phi; n, r, m) = c(\phi_m; n, r)$.
\end{algorithm}
\begin{remarks}
All steps except for the last on can be implemented based on the current state of knowledge.
\begin{enumerate}[(1)]
\item Step~\ref{it:alg:compute-via-fj-expansion:compute-jacobi} requires use of results in~\cite{IPY12} and~\cite{Ra12}.  In the former paper an explicit bijection of $\rmJ_{k,m}(\sigma_l \otimes \rho)$ and $\bigoplus_{i = 0}^l \rmJ_{k + i, m}(\rho)$ was given.  In order to compute $\cF\cE_B\big( \rmJ_{k, m}(\rho) \big)$ using the latter work, note that $\rmJ_{k, m}(\rho) \cong \rmM^{(1)}_{k - \frac{1}{2}}( {\check \rho}_m \otimes \rho )$ (${\check \rho}_m$ is the dual of the Weil representation for the lattice $(2 m)$ -- see~\cite{Ra12}) via theta decomposition.

\item Step~\ref{it:alg:compute-via-fj-expansion:solve-system-of-linear-equations} can be done by means of basic linear algebra.

\item Step~\ref{it:alg:compute-via-fj-expansion:compare-dimensions} requires computations of $\dim \rmM^{(2)}_{k, l}(\rho)$.  If $\rho$ is trivial and $k \ge 4$ ($l = 0$) or $k \ge 5$ ($l > 0$), this has been done by Tsushima~\cite{Ts83}.  To the author's knowledge, the case of non-trivial $\rho$ has not yet been treated.
\end{enumerate}
\end{remarks}
\begin{proof}
The algorithm terminates, because we have
\begin{gather*}
  \rmF\rmM^{(2)}_{k, l}(\rho)
=
  \bigcap_{0 < B \in \ZZ} \rmF\rmM^{(2)}_{k, l}(\rho)_B
\text{,}
\end{gather*}
and $\dim \rmF\rmM^{(2)}_{k, l}(\rho)_B$ is monotonically decreasing for sufficiently large $B$ by Proposition~\ref{prop:inclusion-of-spaces-of-truncated-formal-fourier-jacobi-expansions}.

Correctness of Algorithm~\ref{alg:compute-via-fj-expansion:compute-jacobi} follows, because we have
\begin{gather*}
  \cF\cE_B\big( \rmM^{(2)}_{k, l}(\rho) \big)
\subseteq
  \rmF\rmM^{(2)}_{k, l}(\rho)_B
\end{gather*}
for all $0 < B \in \ZZ$.  If the dimension check in Step~\ref{it:alg:compute-via-fj-expansion:compare-dimensions} succeeds, then equality holds.
\qedhere


\end{proof}

\bibliographystyle{amsalpha}
\bibliography{bibliography}

\end{document}